\newcommand{\R}{\Rz}
\newcommand{\Rz}{\mathbb{R}}
\newcommand{\RR}{\mathbb{R}}
\newcommand{\NN}{\mathbb{N}}
\newcommand{\RN}{\mathbb{R}^N}
\newcommand{\RdN}{\mathbb{R}^{2N}}
\newcommand{\RNxi}{\mathbb{R}^N_{\xi}}
\newcommand{\Nz}{\mathbb{N}}
\newcommand{\rhs}{right hand side}
\newcommand{\lhs}{left hand side}
\newcommand{\hbeta}{\hat\beta}
\newcommand{\epsi}{\varepsilon}
\newcommand{\s}{\sigma}
\newcommand{\vf}{\varphi}
\DeclareMathOperator{\sign}{sign}
\newcommand{\Sch}{\mathcal S(\RN)}
\newcommand{\calV}{\mathcal{V}}
\newcommand{\calH}{\mathcal{H}}
\newcommand{\ee}{_\varepsilon}
\newcommand{\dx}{{\rm d} x}
\newcommand{\dxi}{{\rm d} \xi}
\newcommand{\dz}{{\rm d} z}
\newcommand{\dy}{{\rm d} y}
\newcommand{\dt}{{\rm d} t}
\newcommand{\vep}{\varepsilon}
\renewcommand{\d}{{\rm d}}
\newcommand{\Csi}{\mathbb{C}_{\sigma}}
\newcommand{\sigmak}{\sigma_{k}}
\newcommand{\Xzs}{\mathcal{X}_{s,0}}
\newcommand{\Xzsk}{\mathcal{X}_{s_k,0}}
\newcommand{\Xzsi}{\mathcal{X}_{\sigma,0}}
\newcommand{\Xzsik}{\mathcal{X}_{\sigmak,0}}
\newcommand{\Xzr}{\mathcal{X}_{r,0}}
\newcommand{\Xzrk}{\mathcal{X}_{r_k,0}}
\newcommand{\Xzb}{\mathcal{X}_{\beta,0}}
\newcommand{\Xb}{\mathcal{X}_{\beta}}
\newcommand{\Xr}{\mathcal{X}_{r}}
\newcommand{\Asig}{\mathfrak{A}_\sigma}
\newcommand{\Asi}{\mathfrak{A}_\sigma}
\newcommand{\Asigk}{\mathfrak{A}_{\sigma_k}}
\newcommand{\As}{\mathfrak{A}_s}
\newcommand{\Ask}{\mathfrak{A}_{s_k}}
\newcommand{\Ar}{\mathfrak{A}_r}
\newcommand{\Arm}{\mathfrak{A}_{r/2}}
\newcommand{\Askk}{\mathfrak{A}_{s_k}}
\newcommand{\Ark}{\mathfrak{A}_{r_k}}
\newcommand{\B}{B}
\newtheorem{corollary}{Corollary}[section]
\newtheorem{propo}{Proposition}[section]
\newtheorem{remark}{Remark}[section]
\newtheorem{lemma}{Lemma}[section]
\newtheorem{defi}{Definition}[section]
\newtheorem{theorem}{Theorem}
\newcommand{\LR}{L^2(\RN)}
\newcommand{\LO}{H_0} 
\newcommand{\LpO}{L^p_0(\RN)}
\newcommand{\LpOd}{L^{p'}_0(\RN)}
\newcommand{\DO}{\mathscr D_{\Omega}(\RN)}
\newcommand{\lista}{\begin{list}{}{\setlength{\leftmargin}{0.6in}\setlength{\labelwidth}{1.5in}\setlength{\labelsep}{0.2in}}}
\newcommand{\finelista}{\end{list}} 
\newcommand{\Xsigz}{\mathcal{X}_{\sigma,0}}
\newcommand{\Esig}{\mathcal{E}_{\sigma}}
\newcommand{\Esik}{\mathcal{E}_{\sigma_k}}
\newcommand{\Xsz}{\mathcal{X}_{s,0}}
\newcommand{\Ds}{(-\Delta)^s}
\newcommand{\Drk}{(-\Delta)^{r_k}}
\newcommand{\Dr}{(-\Delta)^r}
\newcommand{\Drreg}{(-\Delta)^r_{\rm reg}}
\newcommand{\Dsireg}{(-\Delta)^\sigma_{\rm reg}}
\newcommand{\Dre}{(-\Delta)^r_\epsi}
\newcommand{\Kre}{K_{r,\epsi}}
\newcommand{\Dsig}{(-\Delta)^\sigma}
\newcommand{\Drkm}{(-\Delta)^{r_{k}/2}}
\newcommand{\pv}{\mathop{p.v.}}
\newcommand{\Esi}{\mathbb{E}_{\sigma}}
\newcommand{\Esk}{\tilde{\mathbb{E}}_{\sigma_k}}
\newcommand{\Ezsi}{\mathbb{E}_{\sigma}(u_0)}
\newcommand{\Ezsit}{\tilde{\mathbb{E}}_{\sigma_k}(u_{0,k})}
\newcommand{\RRR}{\color{red}}  
\newcommand{\EEE}{\color{black}}
\begin{document}

\title{Fractional Cahn-Hilliard, Allen-Cahn and porous medium equations}

\pagestyle{myheadings}

\author{Goro Akagi}
\address{Graduate School of System Informatics, Kobe University}
\author{Giulio Schimperna}
\address{Dipartimento di Matematica ``F. Casorati'', via Ferrata 1, I--27100 Pavia, Italy}
\author{Antonio Segatti}
\address{Dipartimento di Matematica ``F. Casorati'', via Ferrata 1, I--27100 Pavia, Italy}
\email{akagi@port.kobe-u.ac.jp}
\email{giusch04@unipv.it}
\email{antonio.segatti@unipv.it}
\urladdr{http://www2.kobe-u.ac.jp/~akagi56/index.html}
\urladdr{http://www-dimat.unipv.it/giulio}
\urladdr{http://www-dimat.unipv.it/segatti}

\keywords{Fractional Laplacian, Cahn-Hilliard equation, 
 fractional porous medium equation,
 singular limit, stationary solution}
 \subjclass[2000]{35R11,35B25,35B40,35K20}

\begin{abstract}
 We introduce a fractional variant of the Cahn-Hilliard equation
 settled in a bounded domain $\Omega\subset \RR^N$ and complemented with 
 homogeneous Dirichlet boundary conditions of solid type
 (i.e., imposed in the whole of $\RR^N\setminus\Omega$). 
 After setting a proper functional framework,
 we prove existence and uniqueness of weak solutions 
 to the related initial-boundary value problem. 
 Then, we investigate some significant singular 
 limits obtained as the order of either of the fractional
 Laplacians appearing in the equation is let tend to $0$.
 In particular, we can rigorously prove that
 the fractional Allen-Cahn, fractional porous medium,
 and fractional fast-diffusion equations can be
 obtained in the limit. Finally, in the last part
 of the paper, we discuss existence and qualitative
 properties of stationary solutions
 of our problem and of its singular limits.
\end{abstract}

\maketitle

%
%

\section{Introduction}
\label{sec:intro}

Let $\Omega$ be a smooth bounded domain in $\mathbb{R}^N$. 
For $s, \sigma \in (0,1)$, 
we consider the following class of initial and boundary value problems:
\begin{alignat}{4}\label{eq:fCH}
  & \partial_t u  + \Ds w = 0 \ &\hbox{ in }&  \Omega \times (0,+\infty), \\
  & w = \Dsig u + W'(u) \ &\hbox{ in }& \Omega\times (0,+\infty),  \label{eq:chem_pot}\\
  & u(x,0) = u_0(x) \ &\hbox{ in }& \Omega, \label{eq:iniz_bc}\\
  & u = w = 0 \ &\hbox{ in }& \RN\setminus \Omega.\label{eq:bc}
\end{alignat}
The above system constitutes a natural generalization of the well-known 
and extensively studied \emph{Cahn-Hilliard equation},
{\em Allen-Cahn equation}, 
and {\em porous medium equation}.
More precisely, when $s=\sigma =1$, 
system \eqref{eq:fCH}-\eqref{eq:chem_pot} 
reduces to the Cahn-Hilliard equation 
(cf.~\cite{CH}), when $s=0$ and $\sigma =1$ we have 
the Allen-Cahn equation (cf.~\cite{AC}), and 
when $s=\sigma=0$, \eqref{eq:fCH}-\eqref{eq:chem_pot} 
turns to an ODE. The relations between the above system and
the porous medium equation will be outlined and 
made rigorous later on.

The function $W$ in \eqref{eq:chem_pot} represents a configuration
potential which may have two (or more) wells. The general structure
of $W$ is given by
\begin{equation}\label{general:W}
  W(v) = \hat\beta(v) -  \frac{\lambda}2 v^2,
   \quad\text{so that } W'(v) = \beta(v) - \lambda v,
   \quad\text{where } \beta = \hat\beta'.
\end{equation}
Here $\hat\beta$ is a smooth and convex function and 
$\lambda \ge 0$ is a constant. Hence, if $\lambda>0$,
then $W$ may be nonconvex. In the phase-transition literature,
the wells of $W$ correspond to energy minima (attained at 
pure phases or configurations). In view of the variational structure
of our system, it is convenient to keep the same interpretation
also in the present case. Actually, several types of
significant choices have been proposed for $W$, also including
cases where $\hat\beta$ is nonsmooth or even singular 
(like the so-called {\it logarithmic potential}\/
$\hat\beta(v)=(1-v)\log(1-v)+(1+v)\log(1+v)$,
$v\in(-1,1)$).
For the sake of simplicity, in this paper we will 
just consider the case given by
\begin{equation}\label{choice:pot}
  \hat\beta(v)=\frac1p |v|^p, \ \ p\in(1,\infty),  
   \quad\text{so that } \beta(v) = |v|^{p-1}\sign v.
\end{equation}
Moreover, we will set $\lambda=1$.
This choice, up to an additive constant, includes the standard
double-well potential, namely $W(v)=\frac14 (v^2-1)^2$,
widely used in the literature. 
Mathematically speaking, the cases $p>2$ and $p<2$ enjoy
rather different features. Indeed, while in the former case 
$W(v)$ is coercive as $|v|\nearrow \infty$,
in the latter situation $W$ is unbounded from below. 
Hence, we can expect (and, indeed, we will prove) 
different behaviors of the solutions, especially
for large values of the time variable.
Note that the case $p=2$ corresponds in fact to the 
linear problem and is not considered here.

A further important feature of our
problem is the occurrence of {\it solid}\/ boundary 
conditions of homogeneous Dirichlet type,
stated by \eqref{eq:bc}. Namely, the values of $u$ and $w$ 
are prescribed in the {\it whole complement}\/
of $\Omega$, not only on the boundary. Of course, this 
assumption is strongly related to the {\it nonlocal}\/ 
character of fractional Laplacians.
Indeed, it is worth observing from the very beginning that, 
though the values of $(-\Delta)^s w$ and of $(-\Delta)^\sigma u$
at any point $x\in \Omega$ depend also on the values of $u$ and $w$ 
outside $\Omega$ (which are set to be $0$ by the boundary conditions),
we prescribe the validity of \eqref{eq:fCH}
and \eqref{eq:chem_pot} only at the points $x \in \Omega$. For 
$x \in \RR^N\setminus \Omega$,
\eqref{eq:fCH}-\eqref{eq:chem_pot}
need not to be satisfied (and, indeed, there is no 
reason why they should). This observation will be further 
clarified in Section~\ref{sec:not}, where the appropriate 
concept of weak solution is introduced. Correspondingly, 
we will also recall the precise definition of 
$(-\Delta)^r$, $r\in(0,1)$, both in the strong and 
in the weak (variational) form, the latter being 
the more appropriate one for the analysis of our problem.

\smallskip
The study of system \eqref{eq:fCH}-\eqref{eq:bc}
is motivated both from the point of applications and due to 
its mere mathematical interest. Under the first perspective, 
it is worth recalling that the Allen-Cahn and Cahn-Hilliard 
equations, in their standard formulation (i.e., with
the usual Laplace operators), play a central
role in materials science. Indeed, they commonly
occur in mathematical models for 
phase-transition or separation,
viscoelasticity, damaging, complex fluids, and 
whenever {\em diffuse interfaces}\/ appear
(see, e.g., \cite{CMZ,N-C} 
for a comprehensive bibliography).
A reason for considering a fractional version 
of the Cahn-Hilliard equation can be provided 
by observing that, in the original 
formulation of the physical model \cite{CH}, 
the Laplace operator in \eqref{eq:chem_pot} was
actually replaced by a spatial convolution 
term, aimed at describing long-range interactions
among particles. It was only in the subsequent
mathematical literature that, mainly for analytical
reasons, this nonlocal term has been substituted with the 
term $- \Delta u$. Under this perspective,
the use of the fractional Laplacian 
$(-\Delta)^\sigma$ (which, at least for smooth functions, 
may be represented exactly by a convolution integral) 
appears to be more adherent to the physical setting. 
It is worth remarking that the study of fractional 
(or, more generally, nonlocal) PDE's 
is a lively research topic, both from the point
of view of mathematical theory and 
in relation with the many real-world applications.
Among these, we mention obstacle problems \cite{caffa_salsa_silve},
finance \cite{cont}, quasi-geostrophic flows 
\cite{caffa_vasse10,costa_wu99}, 
anomalous diffusion \cite{MK,Wo,Za}. 
A more comprehensive list of references is provided 
in the survey~\cite{Dine_Pala_Vald}.

In the recent literature, a relevant number of works have 
been devoted to the analysis of nonlocal Allen-Cahn and 
Cahn-Hilliard models. Here we quote, among others, 
\cite{BatesHan,GZ,NNV} (see also \cite{CFG} for an application 
to complex fluids). Actually, in \cite{BatesHan,CFG,GZ}, 
the term $\Dsig u$ in \eqref{eq:chem_pot} is replaced by 
\begin{equation}\label{eq:bates}
  J[u] = a (\cdot) u - j * u, 
   \quad a(x) = \int_\Omega j (x - y) \, \dy, 
   \quad (j * u) (x) = \int_\Omega j (x - y) u(y) \, \dy, 
\end{equation}
where the convolution kernel $j$ enjoys suitable, 
and rather strong, regularity properties. For instance,
in \cite{BatesHan}, $j$ is assumed to lie in $C^{2+\alpha}(\RR^N)$ 
for some $\alpha>0$, while in \cite{CFG} $j$ is taken 
in $W^{1,1}(\RR^N)$, and in \cite{GZ} a similar 
condition is required. In all cases, 
both integrals in the \rhs\ of \eqref{eq:bates} 
are finite for (almost) every $x\in \Omega$, whenever 
$u$ lies, say, in $L^1(\Omega)$ (more comments on this point
will be given in the Appendix). 

On the other hand, the kernel $K_r$ generating the 
fractional Laplacian $(-\Delta)^r$ 
(cf.~\eqref{def:fract_lapl} below) 
is not even summable. Hence, the corresponding 
convolution integral needs
to be intended in the principal value sense
even for smooth functions $u$ (cf.~\eqref{p.v.}).
In addition to that, if $u$ is not regular (for instance if
it just lies in some $L^p$-space), then the ``pointwise'' 
expression \eqref{def:fract_lapl} makes no sense at all and 
a {\em variational}\/ definition of $(-\Delta)^r u$
is required. In this sense, the fractional Laplacian 
gives rise to a much stronger singularity with respect to
those considered in \cite{BatesHan,CFG,GZ}.
Up to our knowledge, the only papers dealing 
with a true fractional 
Allen-Cahn model are \cite{NNV} and 
\cite{DiPaVa13}, where, however, the analysis is 
restricted to the spatial one-dimensional case 
and mostly concentrated on other aspects rather than 
weak solvability and regularity of solutions.

More recently, Abels, Bosia and Grasselli in \cite{ABG} analyzed
a variant of problem \eqref{eq:fCH}-\eqref{eq:chem_pot},
where the diffusion operator in (the analogue of)
\eqref{eq:fCH} is the standard Laplacian, 
while the diffusion operator in (the analogue of)
\eqref{eq:chem_pot} is the so-called {\it regional}\/
fractional Laplacian. For its definition we refer
the reader to the Appendix; here it is just worth mentioning
that its properties are slightly different compared to 
those of the operator considered in this paper, 
the main point regarding the boundary conditions. Actually,
the operator of \cite{ABG} can be seen as a fractional
power of the Neumann Laplacian on $\Omega$ (in particular,
this implies conservation of mass, which does not hold
here in view of the Dirichlet condition~\eqref{eq:bc}).
The authors of \cite{ABG} prove existence and uniqueness 
of weak solutions in the case when the function $W$ may 
have a singular character (as happens for the logarithmic 
potential mentioned before). Moreover they characterize
the long-time behavior of solution trajectories proving 
existence of the global attractor for the dynamical
process associated to the system. 

\smallskip
The first aim of this paper is to prove existence
and uniqueness of solutions to a weak formulation of 
problem~\eqref{eq:fCH}-\eqref{eq:bc}. 
Hence, differently from \cite{ABG}, we will
consider a fractional dynamics both in~\eqref{eq:fCH} 
and in~\eqref{eq:chem_pot}. This 
program requires, at first, to set a proper functional 
framework. Our approach basically follows (and complements)
the perspective given in \cite{serva-valdi11}, where 
a weak version of the fractional operators ruling the system
is defined. Actually, a variational expression of the Dirichlet
fractional Laplacian can be given at least in two (equivalent)
%
%
ways: one can either work with functions defined 
on $\Omega$ and implicitly extend them to $0$ outside
 $\Omega$ when computing fractional
Laplacians (which depend on values taken on the whole of~$\RR^N$),
or one may use spaces of functions defined on the whole space
but constrained to be identically equal to zero outside $\Omega$.
Generally, we shall work within the latter framework.
This choice permits us to address the problem, 
for all fixed $s,\sigma \in (0,1)$, 
in the usual Hilbert setting, very similarly 
to what happens for the standard Cahn-Hilliard
model. In particular, we can prove existence by means of 
a classical time-discretization scheme.
Compactness and duality arguments are then exploited
in order to pass to the limit in the discretization;
in this way we can avoid any reference to 
finer regularity properties of solutions to fractional 
elliptic and parabolic problems, which may involve 
rather delicate issues. Uniqueness also follows
from a simple contraction principle.

\smallskip
As anticipated, after establishing well-posedness of the model, we
will turn our attention to further properties of solutions.
As a first issue, we will let the ``order'' $\sigma$ of 
the fractional Laplacian in \eqref{eq:chem_pot} go to $0$.
This singular limit is motivated by noting that, as
$\sigma \searrow 0$, $(-\Delta)^\sigma u \to u$ in a suitable sense.
As a consequence we obtain, at least formally,
$$
  w = (-\Delta)^\sigma u + W'(u) 
   = (-\Delta)^\sigma u + \beta(u) - u 
   \stackrel{\sigma\searrow 0}{\longrightarrow} \beta(u).
$$
Hence, on account of \eqref{eq:fCH}
and recalling assumption \eqref{choice:pot}, one
expects to get in the limit the equation
\begin{align}\label{eq:fract_pm}
  \partial_t u  
    + \Ds \big( |u|^{p-1} \sign u \big) & = 0 
          \ \hbox{ in }  \Omega \times (0,+\infty),\\
  u & = 0 \ \hbox{ in } \RN\setminus \Omega.
\end{align}
This corresponds, for $p> 2$, to the so-called 
\emph{fractional porous medium equation},
recently addressed and studied in a number of contributions 
(see, among others, \cite{vazquez_fractpm1}, \cite{Bonf_Vaz12}, 
\cite{Bonf_Vaz13} and \cite{Bonf_Sire_Vaz14}). 

Actually, taking a family $\{(u_\sigma,w_\sigma)\}$ 
of solutions to our problem, and letting
$\sigma\searrow 0$, we can rigorously prove that,
up to extraction of a subsequence, $u_\sigma$ tends
to a limit function $u$ satisfying~\eqref{eq:fract_pm}. 
Our result holds, under natural assumptions on 
the initial data, both for $p>2$ and for $p\in (1,2)$.
The proof is, however, not straightforward
and relies on some fine properties of first eigenvalues 
of fractional elliptic Dirichlet problems, which, 
to the best of our knowledge, are new (see Proposition \ref{prop:asy_1eigen}).
The case $p\in (1,2)$ is a bit more involved due to
the lack of coercivity of the energy functional. 
Indeed, in that case we need to modify a bit the
energy (see Theorem \ref{th:fd} below) in such a way
to get an estimate for 
$u_\sigma$ uniform as $\sigma\searrow 0$. 
However, this modification
does not affect the limit equation~\eqref{eq:fract_pm},
which corresponds in this case to
the \emph{fractional fast-diffusion equation} studied, 
e.g., in \cite{kim_lee}.

On the other hand, it is also natural to investigate what
happens as one lets $s\searrow 0$. In that case, 
one expects that
$$ 
  \Ds w \stackrel{s\searrow 0}{\longrightarrow} w.
$$
In other words, the limit gives rise to 
the \emph{fractional Allen-Cahn equation}
\begin{align*}
  \partial_t u + \Dsig u + W'(u) & = 0 \ \hbox{ in } \Omega\times (0,+\infty),\\
  u & = 0 \ \hbox{ in } \RN \setminus \Omega,
\end{align*}
both for $p > 2$ and for $1 < p < 2$. 
It is worth noting that the limit $s\searrow 0$ 
is considerably simpler than the limit $\sigma \searrow 0$
since keeping $\sigma$ fixed maintains some additional 
space compactness, which reveals to be helpful for the purposes of
obtaining a strong convergence for $u$
and identifying the nonlinear terms in the limit.
As a result of these two procedures, we can see problem
\eqref{eq:fCH}-\eqref{eq:bc} as a bridge 
between the usual Cahn-Hilliard equation (given by $(\sigma,s)=(1,1)$)
and the fractional (or also non-fractional) porous medium 
and Allen-Cahn equations. 

\smallskip
The last part of the paper is devoted to the proof of
some results related to stationary solutions to problem
\eqref{eq:fCH}-\eqref{eq:bc}
(for {\it fixed}\/ $s,\sigma \in (0,1)$).
As expected in view of the nonconvex character 
of $W$, we can show that nontrivial
stationary states exist if and only if the first eigenvalue 
$\lambda_1(\sigma)$ of $(-\Delta)^\sigma$ is 
strictly smaller than $1$. Indeed, whenever 
$\lambda_1(\sigma)\ge 1$, the coercivity
given by $(-\Delta)^\sigma$ compensates the nonconvexity
of $W'(u) = \beta(u) - u$, exactly as happens for the 
standard Laplacian. Of course, the properties of 
the stationary states play an important role for what
concerns the long-time behavior of solutions to the
evolutionary system. We plan to address this issue,
and, particularly, to investigate the properties of 
{\it $\omega$-limit sets}, in a forthcoming paper.

%
%
%
%

\smallskip
The plan of the paper is as follows. A survey on some basic definitions
and tools related to the fractional Laplacian, as well as some useful
lemmas, are presented in the next Section~\ref{sec:not}. In
Section~\ref{sec:main}, we introduce our assumptions
and state our main results. The proofs of existence and regularity
properties of solutions are carried out in Section~\ref{sec:conti}, 
while the convergence to the fractional porous medium and Allen-Cahn
equations is analyzed in Section~\ref{sec:conve}.
Our last results regarding the properties of stationary states 
are presented in~Section~\ref{sec:stat}. Finally, in the Appendix we
provide~some more comments on the relations occurring 
between the problem analyzed here and other nonlocal models of 
Allen-Cahn or Cahn-Hilliard type studied in the literature.
%
%

%
%

\section{Notations and Preliminaries}
\label{sec:not}

\subsection{The fractional Laplacian}%
We introduce here the standard (strong) 
form of the fractional Laplace operator
$\Dr$ in the whole space $\RR^N$. The reader may refer, e.g., to 
\cite{Dine_Pala_Vald} for additional details.
Given $r\in (0,1)$, for $u$ in the Schwartz
class $\Sch$ of the rapidly decaying functions at infinity,
$\Dr u$ is defined as 
\begin{equation}\label{def:fract_lapl}
  \Dr u(x):= C(r,N) \pv \int_{\RN} \frac{u(x)- u(y)}{\vert x-y\vert^{N + 2 r}} \,\dy,
\end{equation}
where the notation $\pv$ means that the integral is taken in the 
{\em Cauchy principal value}\/ sense, namely
\begin{equation}\label{p.v.}
  \pv \int_{\RN} \frac{u(x)- u(y)}{\vert x-y\vert^{N + 2 r}} \,\dy
   = \lim_{\epsilon\searrow 0} \int_{\RN\setminus B(x,\epsilon)} 
          \frac{u(x)- u(y)}{\vert x-y\vert^{N + 2 r}} \,\dy.
\end{equation}
The exact value and the asymptotics with respect to $r$
of the normalizing constant $C(r,N)$ are crucial
for our purposes. To this end, we recall that $C(r,N)=(\int_{\RN}\frac{1-\cos(\zeta_1)}{\vert\zeta\vert^{N+2r}}\hbox{d$\zeta$})^{-1}$
and that (see, e.g., \cite[Corollary 4.2]{Dine_Pala_Vald})
\begin{equation}
\label{eq:limC}
  \lim_{r\searrow 0}\frac{C(r,N)}{r(1-r)} = \frac{2}{\vert {\mathbb S}^{N-1}\vert}.
\end{equation}
On the other hand, since the dependence of $C(r,N)$ with respect to 
the space dimension $N$ is not the major issue for this paper, 
we will always write $C(r)$ for $C(r,N)$ in what follows.
For any $r\in (0,1)$ and for any $x,y\in \RN$ we 
will also use the shorthand notation 
$$
K_r(x-y)=\vert x-y\vert^{-N-2r} 
$$
to denote the singular kernel appearing in the definition
of $\Dr$. A second, albeit equivalent, definition can be given using 
the Fourier transform. Indeed, $\Dr$ can be introduced as
the pseudo-differential operator of symbol $\vert \xi\vert ^{2r}$,
namely
\begin{equation}
\label{def:fract_laplbis}
  \Dr v = \mathfrak{F}^{-1}(\vert \xi\vert ^{2r}\mathfrak{F}(v)),
   \quad \forall\, v\in \Sch.
\end{equation}
We denote by $\mathfrak{F}(v)$ (or by $\hat{v}$) the Fourier transform 
of $v$. 
\subsection{Fractional Sobolev spaces}%

In this subsection, we shall deal with \emph{fractional Sobolev spaces}.
We refer the reader to, e.g.,~\cite{lions_mag} and~\cite{Adams} for
further details. 

For $r \in \R$, the fractional Sobolev space $H^r(\RN)$ is defined by
$$
H^r(\RN) := \left\{ v \in \Sch' \colon (1 + |\xi|^2)^{r/2}
\hat v(\xi) \in L^2 (\RN_\xi) \right\},
$$
where $\Sch'$ stands for the dual space of the Schwartz class
$\Sch$ and $L^2(\RN_\xi)$ is the space of square-integrable
functions with respect to the variables $\xi \in \mathbb R^N_\xi$, equipped
with the norm
$$
\|v\| := \left\| (1 + |\xi|^2)^{r/2}
\hat v(\xi) \right\|_{L^2(\RN_\xi)}
\quad \mbox{ for } \ v \in H^r(\RN).
$$
In particular, for $r \in (0,1)$, we can equivalently write 
$$
H^r(\RN) := \left\{
v \in L^2(\RN) \colon (x,y) \mapsto
K_r(|x-y|)|v(x)-v(y)|^2 \in L^1(\RdN)
\right\},
$$
endowed with the (equivalent) norm 
\begin{equation}\label{eq:normHsig}
  \|v\|^2_{H^r(\RN)}:= \| v\|^2_{L^2(\RN)} +
   \frac{C(r)}{2}[v]^2_{H^r(\RN)}
\quad \mbox{ for } \ v \in H^r(\RN).
\end{equation}
Here, $[\,\cdot\,]_{H^r}$ denotes the so-called {\em Gagliardo-seminorm}
$$
  [v]^2_{H^r(\RN)}:= \iint_{\RdN} K_r(x-y)\vert v(x)-v(y) \vert^2 \,\dx \, \dy.
$$
Furthermore, for $r \in \R$,  $H^r(\RN)$ is an intermediate space
 between $H^m(\RN)$ and $L^2(\RN)$, that is,
$$
H^r(\RN) = \left[ H^m(\RN), L^2(\RN) \right]_\theta
$$
for any $\theta \in (0,1)$ and $m \in \mathbb Z$ satisfying
$r = (1-\theta) m$.

Let $\Omega$ be a bounded domain of $\RN$ with smooth boundary $\partial
\Omega$. For $r \in \R$, the fractional Sobolev space $H^r(\Omega)$ may be analogously 
defined as
$$
H^r(\Omega) := \left[ H^m(\Omega), L^2(\Omega) \right]_{\theta}
$$
for any $\theta \in (0,1)$ and $m \in \mathbb N$ satisfying $(1-\theta)m
= r$.  
Moreover, for $r \in (0,1)$, one may use an alternative definition,
$$
H^r(\Omega) := \left\{
u \in L^2(\Omega) \colon (x,y) \mapsto K_r(|x-y|)|u(x)-u(y)|^2 \in
L^1(\Omega \times \Omega)
\right\}
$$
with the intrinsic norm
$$
\|v\|_{H^r(\Omega)}^{2} := \|v\|_{L^2(\Omega)}^{2} + \dfrac{C(r)}2 
\iint_{\Omega \times \Omega} K_r(|x-y|) |v(x)-v(y)|^2 \; \d x \, \d y
\quad \mbox{ for } \ v \in H^r(\Omega).
$$
Sobolev embeddings and inequalities also hold for fractional
Sobolev spaces. Thus, 
$H^r(\Omega)$ is continuously (resp., compactly)
embedded in $L^q(\Omega)$, provided that $1 \leq q \leq 2_r^* :=
2N/(N-2r)$ (resp., $1 \leq q < 2_r^*$) and $2r < N$.
Finally, for each $r > 0$, $H^r_0(\Omega)$ is defined as the closure of
$C^\infty_0(\Omega)$ in $H^r(\Omega)$. In case $r \leq 1/2$, the space
$H^r_0(\Omega)$ coincides with $H^r(\Omega)$; in case $r > 1/2$,
$H^r_0(\Omega)$ is strictly contained in $H^r(\Omega)$ 
(see, e.g.,~\cite[p.~55, Theorem 11.1]{lions_mag}).


%
\subsection{The functional framework}\label{Ss:FF}%
It is apparent from \eqref{def:fract_laplbis} that, for $v \in \Sch$,
$\Dr v$ does not necessarily belong to $\Sch$ (being $r<1$, 
the symbol $\vert \xi\vert^{2r}$ introduces
a singularity in the origin in its Fourier transform). Moreover,
even for $v$ with compact support, $\Dr v$ generally does
not have compact support due to the non locality of the operator.
In addition to this, the above definition could make no sense
when non-smooth functions are involved. 
Thus, it will be important for us to extend the definition of the
fractional Laplacian to a more general setting. 
This will be accomplished by 
using the theory of distributions together with some tools of convex
analysis. The framework we are going to fix will permit us 
to use variational and energy techniques 
in order to address our problem. 
As already observed in the introduction, although equations
\eqref{eq:fCH}-\eqref{eq:chem_pot}
are settled only in $\Omega$, the behavior of $\Dsig u$ and 
$\Ds w$ depends on the interplay between the values of $u$
and of $w$ inside and outside $\Omega$. 
Proceeding along the lines of \cite{serva-valdi11},
we can then introduce some functional spaces. 

Firstly, we set
\begin{equation}\label{defiH}
   \LO:=\big\{ v \in L^2(\RN): v=0~\text{a.e.~in $\RR^N\setminus \Omega$} \big\}.  
\end{equation}
The space $\LR$ (hence its closed subspace $\LO$)
is endowed with its standard scalar product,
$$
(u,v) := \int_{\RN} u(x)v(x) \, \d x \quad \mbox{ for } \ u,v \in \LR.
$$
Of course, in the closed subspace $\LO$ 
taking the scalar product of $L^2(\Omega)$ and the associated norm
would make no difference.
%
%
Hence, $L^2(\Omega)$ can be identified with $\LO$ by zero
extension outside $\Omega$. Furthermore, we set 
\begin{align*}
\LpO &:= \left\{
v \in L^p(\RN) \colon v = 0 \ \mbox{ a.e.~in } \RN \setminus \Omega 
\right\} \ \mbox{ with } \ \|\cdot\|_{\LpO} := \|\cdot\|_{L^p(\RN)},\\
\DO &:= \left\{
\phi \in C^\infty(\RN) \colon \phi|_{\Omega} \in C^\infty_0(\Omega) \
\mbox{ and } \ \phi = 0 \ \mbox{ in } \RN \setminus \Omega
\right\},
\end{align*}
which can be identified with $L^p(\Omega)$ and $C^\infty_0(\Omega)$,
respectively. Then $\LpO$ is reflexive for $p \in (1,\infty)$, since
$\LpO$ is closed in $L^p(\RN)$; moreover, $\LpO$ is separable for $p \in
[1,\infty)$. Furthermore, $\DO$ is dense in $\LpO$, provided that $1 \leq
p < \infty$. 

As in \cite{serva-valdi11}, we set
$Q:= \RdN \setminus\big((\RN\setminus\Omega)\times (\RN\setminus\Omega) \big)$
and denote by $\Xzr$, $r\in (0,1)$, the space
\begin{equation}\label{eq:defX0}
  \Xzr:=\left\{v\in \LO \colon (x,y)\mapsto (v(x)-v(y))\sqrt{K_r(x-y)}\in L^2(Q) \right\}.
\end{equation}
Actually, $\Xzr$ can be endowed with the scalar product
\begin{equation}\label{eq:proscX0}
 (v,z)_{\Xzr} := ( v, z) 
   + \frac{C(r)}{2} \iint_{Q} K_r(x-y) (v(x)-v(y)) (z(x)-z(y))
   \, \dx \, \dy
\quad \mbox{ for } \ v,z \in \Xzr 
\end{equation}
and the associated norm
\begin{equation}\label{eq:normX0}
 \| v\|^2 := \|v\|^2_{L^2(\Omega)} 
   + \frac{C(r)}{2} \iint_{Q} K_r(x-y)\vert v(x)-v(y) \vert^2 \, \dx \, \dy
\quad \mbox{ for } \ v \in \Xzr, 
\end{equation}
where $C(r)$ is as in \eqref{def:fract_lapl}. 
Then, it is easy to check that $\Xzr$ is a Hilbert space (i.e.,
the above norm is complete). Note that $\Xzr$ could be 
also presented in a more familiar form,
\begin{equation}\label{eq:defX0_bis}
  \Xzr = \big\{ v\in H^r(\RN) \hbox{ such that } 
     v=0 \hbox{ a.e.~in } \RN\setminus \Omega \big\}.
\end{equation}

There holds the Poincar\'e-type inequality,
\begin{equation}\label{eq:poincare}
  \| v\|^2_{L^2(\Omega)}\le c_{P}(r) \frac{C(r)}{2}[v]^2_{H^r(\mathbb{R}^N)}
\quad \mbox{ for all } \ v\in \Xzr
\end{equation}
for some constant $c_{P}(r)$ depending only on $r$, $N$ and the diameter
of $\Omega$. Indeed, one can take $R > 0$ such that $\Omega$ is included
in the open ball $B_R$ of radius $R$ centered at the origin. 
Then, using the definition of the Gagliardo-seminorm, 
we see that
\begin{align*}
 [v]_{H^r(\mathbb{R}^N)}^2 &\geq \int_{\Omega^c}\int_\Omega
 K_r(|x-y|)v(x)^2 \; \d x \, \d y\\
&\geq \int_\Omega \left( \int_{\Omega^c \cap B_{R+1}} K_r(|x-y|) \; \d y
 \right) v(x)^2 \; \d x
\geq \dfrac{|B_{R+1} \setminus \Omega|}{(2R+2)^{N+2r}} \|v\|_{L^2(\Omega)}^2
\quad \mbox{ for all } \ v \in \Xzr,
\end{align*}
where $\Omega^c$ stands for the complement of $\Omega$ and $|B_{R+1}
\setminus \Omega|$ denotes the Lebesgue measure of the set $B_{R+1}
\setminus \Omega$. Note that $|B_{R+1} \setminus \Omega| \geq |B_{R+1}
\setminus B_R| > 0$. Thus the quotient
$[v]_{H^r(\RN)}^2/\|v\|_{L^2(\Omega)}^2$ is bounded from below for $v \in
\Xzr$, whence \eqref{eq:poincare} follows.
Hence, by \eqref{eq:poincare}, the norm on $\Xzr$ given by
\begin{equation}\label{eq:normX0bis}
  \| v \|^2_{\Xzr} := 
    \frac{C(r)}{2} \iint_{Q} K_r(x-y) \vert v(x)-v(y) \vert^2
    \, \dx\, \dy
\quad \mbox{ for } \ v \in \Xzr 
\end{equation}
is equivalent to that defined in \eqref{eq:normX0}. From
now on, we will fix \eqref{eq:normX0bis} to be the norm 
in $\Xzr$.

%
Now let us introduce the dual spaces $\LO'$ and $\Xzr'$ of $\LO$ and
$\Xzr$, respectively. In particular, we are here concerned with the
meaning of the \emph{equality in the dual space},
$$
f = g \ \mbox{ in } \LO' \quad \mbox{ for } \ f, g \in \LO',
$$
which actually means 
\begin{equation}\label{equiv-dual}
\langle f, v\rangle_{\LO} = \langle g, v\rangle_{\LO} \ \mbox{ for
all } v \in \LO.
\end{equation}
By the Riesz representation theorem, one can uniquely take $u_f, u_g \in
\LO$ such that $\langle f , v \rangle_{\LO} = (u_f, v)$ and $\langle g , v
\rangle_{\LO} = (u_g, v)$ for all $v \in \LO$. Hence \eqref{equiv-dual}
yields $u_f = u_g$ in $\Omega$ (hence, over $\RN$). On the other hand,
one may generate $h \in \LO'$ from a function $w_h \in L^2(\RN)$ which
might not vanish outside $\Omega$ by setting 
\begin{equation}\label{pnt_repre}
\langle h, v \rangle_{\LO} := \int_{\RN} w_h(x) v(x) \, \d x
\quad \mbox{ for } v \in \LO. 
\end{equation}
Then $h$ coincides with $f$ in $\LO'$, provided that $w_h = u_f$ in
$\Omega$. In other words, even if $f$ and
$g$ have pointwise representations $w_f, w_g \in \LR$,
respectively, as in~\eqref{pnt_repre}, the relation $f = g$ in $\LO'$
ensures that $w_f(x) = w_g(x)$ for a.e.~$x \in \Omega$ only, and it does
not guarantee the coincidence of $w_f$ and $w_g$ outside $\Omega$. This
observation is also extended to the relation $f = g$ in $\Xzr'$.

From now on, we identify the Hilbert space $\LO$ with its dual
space $\LO'$ by means of the scalar product $(\cdot,\cdot)$.
More precisely, we shall identify $f \in \LO'$ with its unique
representation $u_f \in \LO$ by the Riesz representation theorem. Hence
we shall write $f = g$ \underline{in $\LO$} for claiming the equality of
$f,g \in \LO'$ (i.e., \eqref{equiv-dual}) as well. Here we should
emphasize again that whenever are given 
representatives 
$w_f$ and $w_g$ of $f, g\in H_0'$, respectively,
the identification implies that $w_f = w_g$ only in $\Omega$.
Then since $\Xzr$ can be seen as a {\it dense}\/
subspace of $\LO$, one may consider the Hilbert triple,
\begin{equation}\label{H-tr}
\Xzr \hookrightarrow \LO \simeq \LO' \hookrightarrow \Xzr',
\end{equation}
with compact and densely defined canonical injections. 
This relation will play a crucial role throughout this paper.

On the other hand, for $r\in (0,1)$, the extension
operator of $u\in \Xzr$ to $0$ outside $\Omega$
is a continuous mapping of $H^r(\Omega)\to H^r(\RN)$.
In particular (see \cite[Theorem~11.4, Chapter 1]{lions_mag}),
if $r \in (1/2,1)$, the functions in $\Xzr$ are
equal to zero, in the sense of traces, on $\partial \Omega$.
Hence, $\Xzr$ can be identified with $H^r_0(\Omega)$ in
that case, whereas $\Xzr \simeq H^r_0(\Omega) = H^r(\Omega)$
for $r \in (0,1/2)$. Finally, in the limit case 
$r=1/2$, it turns out that 
$\mathcal{X}_{1/2,0} \simeq  H_{00}^{1/2}(\Omega)$
(again, see \cite{lions_mag} for more details).

Based on this functional framework,
we can introduce, for $r\in(0,1)$, the weak form $\Ar$
of the fractional Laplacian $\Dr$. More precisely, the operator
$\Ar : \Xzr \to \Xzr'$ is defined by
\begin{equation}\label{eq:distr_lapl}
  \langle \Ar v, \phi\rangle 
    := \frac{C(r)}{2}\iint_{\RdN} K_r(x-y)(v(x)-v(y))(\phi(x)-\phi(y)) \,\dx \, \dy, 
    \ \ \text{for all } v,\phi\in \Xzr,
\end{equation}
where the integral over $\RdN$ can be equivalently replaced with an
integral over $Q$. Note that, as soon as $v,\phi\in \Xzr$, the 
integral in the right hand side is finite.
Note also that \eqref{eq:distr_lapl} can be understood as an 
integration by parts formula, at least when $v,\phi$ are 
sufficiently regular. Indeed, to see this, we define
\begin{equation}\label{flepsi}
  \Dre u(x):= C(r) \int_{\RN} \Kre(x-y) ( u(x)- u(y) ) \,\dy,
\end{equation}
where $\Kre := K_r (1 - \chi_{B(0,\epsi)})$
and $\chi_{B(0,\epsi)}$ denotes the characteristic function
of the ball $B(0,\epsi)$ in $\mathbb R^N$. Then, by symmetry
of $\Kre$,
\begin{align}\label{eq:intparts1}
  & \int_{\RN} \Dre v(x)\phi(x)\,\dx 
    = C(r) \iint_{\RdN} \Kre(x-y)(v(x)-v(y))\phi(x) \,\dx \, \dy \\
 \nonumber
  & \mbox{}~~~~~~~~
   = C(r) \iint_{\RdN} \Kre(z)(v(x)-v(x+z))\phi(x) \,\dx \, \dz \\
 \nonumber
  & \mbox{}~~~~~~~~
   = C(r) \iint_{\RdN} \Kre(z)(v(x)-v(x-z))\phi(x) \,\dx \, \dz \\
 \nonumber
  & \mbox{}~~~~~~~~
   = \frac{C(r)}2 \iint_{\RdN} \Kre(z)(v(x)-v(x-z))(\phi(x)-\phi(x-z)) \,\dx \, \dz\\ 
 \nonumber
  & \mbox{}~~~~~~~~ 
   = \frac{C(r)}2 \iint_{\RdN} \Kre(x-y)(v(x)-v(y))(\phi(x)-\phi(y)) \,\dx \, \dy.
\end{align}
Now, letting $\epsi\searrow 0$, the \rhs\ converges to $\langle \Ar v,
\phi\rangle$, provided that $v,\phi$ lie in $\Xzr$ (and, hence, a
fortiori if $v,\phi$ are smooth functions). On the other hand, whenever
$v$ is so smooth that $(-\Delta)^r v$ (i.e., the ``strong'' fractional
Laplacian of $v$ defined in \eqref{def:fract_lapl}) is represented by, 
say, an $L^2$-function, then the \lhs\ of \eqref{eq:intparts1}
converges to $\int_{\RN} \Dr v(x)\phi(x)\,\dx$. Hence, $\Ar$ 
can indeed be seen as an extension of $\Dr$ to less regular function.
Moreover, formula \eqref{eq:distr_lapl} can be also expressed
in terms of Fourier transform. Actually, for $v,\phi\in \Xzr$,
thanks also to Fubini's theorem, we have
\begin{align}\label{eq:distr_lapl2}
 \langle \Ar v, \phi\rangle 
  &= \frac{C(r)}{2}\int_{\mathbb{R}^N}\left(\int_{\mathbb{R}^N}K_r(z)
       (v(y+z)-v(y))(\phi(y+z)-\phi(y))\,\dy\right)\,\dz \nonumber\\
  &= \frac{C(r)}{2}\int_{\mathbb{R}^N}\hat{v}(\xi)
 \overline{\hat{\phi}(\xi)} \left(\int_{\mathbb{R}^N}
    K_r(z) \left|e^{i \xi\cdot z}-1\right|^2 \hbox{d}z\right)\,\dxi \nonumber\\
  & = \int_{\mathbb{R}^N}\vert\xi\vert^{r}\hat{v}(\xi)\,
 \overline{\vert\xi\vert^r \hat{\phi}(\xi)} \,\dxi,
\end{align}
since $\frac{C(r)}{2} \int_{\mathbb{R}^N}
K_r(z) |e^{i \xi\cdot z}-1|^2\, \dz  = \vert\xi\vert^{2r}$ (see
\cite{Dine_Pala_Vald}).
In particular, using \eqref{def:fract_laplbis}, we get
\begin{equation} \label{eq:distr_lapl3}
  \langle \Ar v, \phi\rangle 
    = \int_{\mathbb{R}^N}\vert\xi\vert^{r}\hat{v}(\xi)\,
    \overline{\vert\xi\vert^r \hat{\phi}(\xi)} \, \dxi
    = \int_{\mathbb{R}^N}(-\Delta)^{r/2}v(x) \, (-\Delta)^{r/2}\phi(x) \, \dx 
    = \langle \Ar\phi, v\rangle,
\end{equation}
which could serve as an alternative, albeit equivalent, definition of
the weak fractional Laplacian $\Ar$.

%
%
%
%

\smallskip

A weak form of the fractional Laplacian can be introduced
also for the whole space case, $\Omega = \RN$.
Indeed, for $r\in(0,1)$, we can set (cf.~ \eqref{eq:defX0})
\begin{equation}\label{eq:defX}
  \Xr:=\left\{v\in L^2(\RN): 
(x,y)\mapsto (v(x)-v(y))\sqrt{K_r(x-y)}\in L^2(\RdN) \right\}
= H^r(\RN). 
\end{equation}
Then, the previous discussion extends, with minor modifications,
to the space $\Xr$. Moreover, one can correspondingly consider
the Hilbert triplet $(\Xr,H,\Xr')$. With a small abuse of
notation we will 
indicate with the same symbol $\Ar$ the weak form of the fractional
Laplacian as an operator from $\Xr$ to $\Xr'$ defined by
\eqref{eq:distr_lapl} with $\Xzr$ replaced by $\Xr$.

The relations between the Gagliardo-seminorm and the Fourier-transform
definition of the fractional Laplacian are also clarified by the following
property (cf.~\cite[Propositions 3.4 \& 3.6]{Dine_Pala_Vald}):
\begin{equation}\label{eq:semi_norm}
  \frac{C(r)}{2}[v]^2_{H^r(\RN)} 
    = \| \Arm v\|_{L^2(\RN)}^2 
    =  \big\| \vert\xi\vert^r \hat{v} \big\|^2_{L^2(\RN)} 
      \ \hbox{ for } v\in H^r(\RN) 
      \ \hbox{ and } r \in (0,1).
\end{equation}
In particular, the norm in $H^r(\RN)$ can be equivalently expressed as
\begin{equation}\label{eq:normaHsigbis}
   \|v\|^2_{H^r(\RN)} = \|v\|^2_{L^2(\RN)} + \| \Arm v\|^2_{L^2(\RN)} 
  = \|v\|^2_{L^2(\RN)} + \big\| \vert\xi\vert^r \hat{v} \big\|^2_{L^2(\RN)}.
\end{equation}
It is worth noting that there is another
possible approach for dealing with fractional 
Laplacians on bounded domains. 
Indeed, one may define the operator, called
{\it spectral fractional Laplacian}, as
\begin{equation} \label{eq:spectral}
  (-\Delta_{\Omega})^r f(x) := \sum_{j=1}^{+\infty}
    \lambda_{j}^r\hat f_{j}\phi_{j}(x),\ x\in \Omega,
\end{equation}
where $\lambda_{j}>0, j=1,2,\ldots$, are
the eigenvalues of the Dirichlet Laplacian
on $\Omega$, $\phi_j$ are the corresponding normalized
eigenfunctions, and
$$
  \hat f_j :=
   \int_{\Omega} f(x)\phi_j(x) \,\dx, \ \hbox{ with } \| \phi_j \|_{L^2(\Omega)} = 1.
$$
As observed in \cite{serva-valdi_eigen}, the spectral
fractional Laplacian $(-\Delta_\Omega)^r$ is a different operator
with respect to the operator considered in this paper.
We refer to \cite{cabre_tan} and \cite{Bonf_Sire_Vaz14} and
to the references therein for the functional framework 
related to \eqref{eq:spectral} and for the analysis
of some differential problems 
involving $(-\Delta_\Omega)^r$.


\subsection{An $L^2$-framework for fractional Laplacians.}
\label{Ss:L2}
The weak fractional Laplacian $\Ar$ can be also interpreted in the
framework of convex analysis. Actually, for $r\in(0,1)$, 
we can introduce the functional $G_r:\LO\to [0,+\infty]$ given by
\begin{equation}\label{defiG}
  G_r(v) := 
   \begin{cases} \displaystyle
     \frac{C(r)}{4} \iint_{Q} K_r(x-y) | v(x)-v(y) |^2 \, \dx \, \dy
      & \text{ if } v\in \Xzr,\\
      + \infty & \text{ otherwise}.
   \end{cases}
\end{equation}
Then, it is obvious that $G_r$ is a lower semicontinuous
(in $\LO$), convex functional. Moreover,
given $u\in D(G_r) := \Xzr$ (the {\it effective domain}\/ of $G_r$),
it is clear that, for all $v\in \Xzr$,
\begin{equation}\label{gateaux1}
  \lim_{t\to 0} \frac{G_r( u + tv ) - G_r(u)}{t}
    = \left\langle \Ar u, v \right\rangle_{\Xzr}.
\end{equation}
On the other hand, if $\xi\in \LO$ belongs to $\partial G_r(u)$, 
where the
 {\it subdifferential} $\partial G_r$ is defined by
$$
\partial G_r(w) := \{\xi \in \LO \colon G_r(v)-G_r(w) \geq (\xi, v-w) \
 \mbox{ for all } \ v \in \LO\}
\quad \mbox{ for } \ w \in D(G_r)
$$
with domain $D(\partial G_r) := \{w \in D(G_r) \colon \partial G_r(w)
 \neq \emptyset\}$, then one
can easily check that, again for all $v\in \Xzr$,
\begin{equation}\label{gateaux2}
  \lim_{t\to 0} \frac{G_r( u + tv ) - G_r(u)}{t}
    = ( \xi , v ).
\end{equation}
Combining \eqref{gateaux1} and \eqref{gateaux2}, for $u \in D(\partial
   G_r)$ and $\xi \in \partial G_r(u)$, we obtain
\begin{equation}\label{gateaux3}
  ( \xi , v )
   = \left\langle \Ar u, v \right\rangle_{\Xzr}
   \ \text{ for all } v\in \Xzr,
\end{equation}
whence $\partial G_r(u) = \{\xi\}$ and $\xi$ is the realization in $\LO$
of $\Ar u$. Then $\partial G_r$ is unbounded linear in $\LO$.
From now on, $\partial G_r$ will be denoted, with a small abuse of
notation, by $\Ar$. Then $D(\Ar)$ means $D(\partial G_r)$; moreover any
$u \in D(\Ar)$ can be seen as the solution
to the elliptic problem
\begin{equation}\label{gateaux4}
\Ar u = f \ \mbox{ in } \LO
\end{equation}
for some $f \in \LO$. Here we note that \eqref{gateaux4} does not mean
that $\Ar u$ vanishes outside $\Omega$ (see \S \ref{Ss:FF}).
Due to the lack of regularity results for fractional elliptic
problems such as \eqref{gateaux4}, it is a nontrivial issue to give a
precise characterization to the domain $D(\Ar)$. Up to our knowledge,
the sharpest results available in literature are due to Ros-Oton and
Serra, who proved in~\cite[Prop.~1.4 (ii)-(iii)]{ROS} the following:
\begin{propo} \label{prop:ROS}
 Let $\Omega \subset \RR^N$ be a bounded $C^{1,1}$-domain.
 Then, if $r \in(0, \frac N 4 ) \cap (0,1)$, the solution $u$ to
 \eqref{gateaux4} satisfies 
 \begin{equation}\label{ROS11}
   \| u \|_{L^q(\Omega)} \le C \| f \|_{\LO}
    \ \text{ for }\, q = \frac{2N}{N-4r}, 
 \end{equation}
 while for $r\in(\frac N 4,1) \cap (0,1)$ we have
 \begin{equation}\label{ROS12}
   \| u \|_{C^\alpha(\overline\Omega)} \le C \| f \|_{\LO}
    \ \text{ for }\, \alpha = \min \left\{ r , 2r-\frac N 2
					\right\}. 
 \end{equation}
 In both cases the constant $C>0$ depends only on 
 $r$, $|\Omega|$, and $q$ {\rm (}or $\alpha${\rm )}.
\end{propo}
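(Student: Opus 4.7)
The plan is to reduce both estimates to pointwise bounds on the Green function $G_\Omega(x,y)$ associated with the Dirichlet problem $\Ar u = f$ in $\LO$. Since $\Omega$ is bounded with $C^{1,1}$ boundary, one constructs such a Green function and establishes the two-sided estimate $0\le G_\Omega(x,y)\le c_{N,r}|x-y|^{2r-N}$, valid whenever $N>2r$; this mimics the Riesz potential kernel of order $2r$ in $\RN$. The solution is then represented as $u(x)=\int_\Omega G_\Omega(x,y)f(y)\,\d y$ and dominated pointwise by $|u|\le c\, I_{2r}|f|$, where $I_{2r}$ is the Riesz potential of order $2r$.

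For the first case, $r\in (0,N/4)\cap(0,1)$, the Hardy--Littlewood--Sobolev inequality yields $\|I_{2r} g\|_{L^q(\RN)}\le C\|g\|_{L^2(\RN)}$ with $1/q=1/2-2r/N$, that is, exactly $q=2N/(N-4r)$; the requirement $q<\infty$ is equivalent to $4r<N$, which matches the standing assumption. Combining this with the Green function domination, and using $f\in\LO$ (so $f$ extended by zero lies in $L^2(\RN)$), yields \eqref{ROS11}.

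For the second case, $r\in(N/4,1)\cap(0,1)$, Hardy--Littlewood--Sobolev only gives $u\in L^\infty$, so H\"older control has to be obtained by finer arguments. Interior H\"older regularity of order $2r-N/2$ follows from local $H^{2r}$-regularity for $\Dr u = f$, which can be produced by a commutator/difference-quotient argument \`a la Silvestre together with localisation, and then applying the Sobolev embedding $H^{2r}\hookrightarrow C^{2r-N/2}$ on compact subsets of $\Omega$. The delicate point is the boundary behaviour: one shows that $u(x)\sim \mathrm{dist}(x,\partial\Omega)^r$ near $\partial\Omega$, which forces the H\"older exponent up to the boundary to be at most $r$. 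Balancing the interior contribution $2r-N/2$ against the boundary contribution $r$ gives the optimal exponent $\alpha=\min\{r,2r-N/2\}$ in \eqref{ROS12}.

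The principal obstacle is precisely the boundary behaviour in the second case. The asymptotic profile $u(x)\sim \mathrm{dist}(x,\partial\Omega)^r$, and the resulting $C^r$ barrier up to $\partial\Omega$, are the core technical results of Ros-Oton and Serra; they rely on a blow-up/rescaling argument exploiting the $C^{1,1}$ regularity of $\partial\Omega$, a comparison with explicit half-space solutions (where the solution is a power of the distance function) and a careful barrier construction in a boundary-fitted coordinate system. In a proof proposal of this size it is sensible to invoke these sharp boundary H\"older estimates as a black box, since they form the deepest ingredient and the bulk of the work, while the bootstrapping via Green function bounds described above is essentially a straightforward consequence of classical harmonic analysis and fractional Sobolev embeddings.
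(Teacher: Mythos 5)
The paper does not prove Proposition~\ref{prop:ROS}: it is stated verbatim as a citation of Ros-Oton and Serra, \cite[Prop.~1.4~(ii)-(iii)]{ROS}, and used afterwards as a black box. So there is no in-paper proof to compare your argument against. That said, your sketch is a reasonably faithful overview of how the cited result is actually established. The reduction to the Green function bound $0\le G_\Omega(x,y)\lesssim |x-y|^{2r-N}$ (maximum principle/comparison with the free-space Riesz kernel) and the Hardy--Littlewood--Sobolev step for $r<N/4$ are indeed how \eqref{ROS11} is obtained, and you correctly isolate the boundary barrier $|u|\lesssim d(\cdot,\partial\Omega)^r$ (which is the hard, $C^{1,1}$-dependent piece, coming from Ros-Oton--Serra's boundary regularity theory) as the source of the $\min\{r,\,\cdot\}$ cap in \eqref{ROS12}. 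Two small caveats worth keeping in mind if this were expanded into a full proof: first, $u\sim d^r$ should be read as a one-sided bound $|u|\lesssim d^r$ with $C^r$ regularity up to $\partial\Omega$ (the two-sided profile only holds for sign-definite data); second, when $N\le 2r$ (possible in the second case since $N<4$) the free-space Riesz kernel is no longer locally unbounded, so the Green-function domination there directly yields $u\in L^\infty$ rather than via HLS, and the interior exponent $2r-N/2$ then comes cleanly from $H^{2r}_{\mathrm{loc}}\hookrightarrow C^{2r-N/2}_{\mathrm{loc}}$. These are adjustments of presentation, not gaps; as a summary of the proof you are citing, the proposal is sound.
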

%
%
The operator $\Ar$ also enjoys a useful monotonicity property. Indeed, let 
$\beta:\RR \to \RR$ be any smooth monotone
function such that $\beta(0)=0$ and let 
$\hbeta:\RR\to [0,+\infty]$ be the (convex) function
such that $\hbeta(0)=0$ and $\hbeta'=\beta$. Then,
setting for instance
\begin{equation}\label{trunc}
  \beta\ee(r):=
    \begin{cases}
      \beta(r) & \text{if } \ |r|\le \epsi^{-1},\\
      \beta(\epsi^{-1}) & \text{if } \ r > \epsi^{-1},\\
      \beta(-\epsi^{-1}) & \text{if } \ r < -\epsi^{-1},
    \end{cases}
\end{equation}
then, for fixed $\epsi>0$, $\beta\ee$ is bounded,
Lipschitz continuous, and monotone. Hence, it is 
immediate to check that, if $v\in\Xzr$, then 
$\beta\ee(v)\in \Xzr$ for each $\vep > 0$. Hence, by 
monotonicity,
\begin{equation}\label{eq:monotone:e}
  \langle \Ar v, \beta_\epsi(v) \rangle_{\Xzr} 
    = \frac{C(r)}{2} \iint_{\RdN} K_r(x-y) (v(x)-v(y))(\beta\ee(v(x))-\beta\ee(v(y)))\,\dx \,\dy
    \ge 0.
\end{equation}
Moreover, it is clear that, for any $r_1,r_2\in \RR$,
the product $(\beta\ee(r_1) - \beta\ee(r_2))(r_1-r_2)$
increases as $\epsi$ decreases. Hence, letting
$\epsi\searrow 0$, we can apply 
the monotone convergence theorem to \eqref{eq:monotone:e}.
This gives
\begin{align}\label{eq:monotone}
  & \frac{C(r)}{2} \iint_{\RdN} K_r(x-y) (v(x)-v(y))(\beta\ee(v(x))-\beta\ee(v(y)))\,\dx \,\dy \\
  \nonumber 
  & \to \frac{C(r)}{2} \iint_{\RdN} K_r(x-y) 
     (v(x)-v(y))(\beta(v(x))-\beta(v(y)))\,\dx \,\dy
    \ge 0.
\end{align}
Of course, the latter integral may well be (plus) infinity.

\bigskip

\subsection{Asymptotics of $\Dr$ and principal eigenvalues as $r \searrow 0$}%
%

Let us start with the following lemma on the behavior of $\Dr$ as $r
\searrow 0$, which will play an important role in the sequel:
\begin{lemma} \label{lem:limit_s_zero}
 For any $\phi \in \Sch$, there holds
 \begin{equation}
 \label{eq:limit_s_zero}
   \Dr \phi \xrightarrow{r\searrow 0} \phi\ \hbox{ strongly in } H^\alpha(\mathbb{R}^N)
    \ \text{ for any }\alpha\ge 0.
 \end{equation}
\end{lemma}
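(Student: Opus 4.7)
The natural strategy is to pass through the Fourier transform, since the fractional Laplacian is exactly the Fourier multiplier $|\xi|^{2r}$ and the $H^\alpha$-norm admits the convenient expression recalled in \eqref{eq:normaHsigbis}.

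Specifically, using \eqref{def:fract_laplbis}, I would write
\[
\widehat{\Dr \phi - \phi}(\xi) = \big(|\xi|^{2r} - 1\big) \hat\phi(\xi),
\]
so that, recalling the definition of $H^\alpha(\RN)$ in terms of the multiplier $(1+|\xi|^2)^{\alpha/2}$, one has
\[
\|\Dr \phi - \phi\|_{H^\alpha(\RN)}^2
\;=\; \int_{\RN} (1+|\xi|^2)^{\alpha}\, \big||\xi|^{2r} - 1\big|^2\, |\hat\phi(\xi)|^2 \, \d \xi.
\]
The goal is then simply to show that this integral tends to $0$ as $r\searrow 0$, which I would do by Lebesgue's dominated convergence theorem.

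For the pointwise convergence, note that for every $\xi \neq 0$ one has $|\xi|^{2r} \to 1$ as $r\searrow 0$, so the integrand tends to $0$ almost everywhere. For the domination, I would split according to whether $|\xi|\leq 1$ or $|\xi|\geq 1$: on the unit ball one has $|\xi|^{2r}\leq 1$ and hence $\big||\xi|^{2r}-1\big|^2 \leq 1$, while for $|\xi|\geq 1$ and $r\in(0,1)$ one has $|\xi|^{2r}\leq |\xi|^{2}$ and therefore $\big||\xi|^{2r}-1\big|^2 \leq (1+|\xi|^2)^2$. Combining the two estimates, the integrand is dominated uniformly in $r\in(0,1)$ by the function
\[
(1+|\xi|^2)^{\alpha+2}\, |\hat\phi(\xi)|^2,
\]
which is integrable over $\RN$ because $\phi\in\Sch$ implies $\hat\phi\in\Sch$, so that $|\hat\phi(\xi)|^2$ decays faster than any inverse polynomial.

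I expect no real obstacle here; the argument is essentially a direct application of dominated convergence once the $H^\alpha$-norm is rewritten via the Fourier transform. The only mildly delicate point is noting that one needs $\phi$ genuinely Schwartz (or at least decaying rapidly enough that $\hat\phi$ absorbs the polynomial growth of the dominator), which is exactly the hypothesis made. Any possible refinement for less regular $\phi$ would have to control the behavior of $|\xi|^{2r}-1$ near $\xi=0$ more carefully, but for $\phi\in\Sch$ no such refinement is needed.
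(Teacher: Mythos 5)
Your proof is correct and follows essentially the same route as the paper: rewrite the $H^\alpha$-norm via the Fourier transform and Plancherel, observe the pointwise convergence $|\xi|^{2r}\to 1$ for $\xi\neq 0$, and conclude by dominated convergence using the rapid decay of $\hat\phi$. The only difference is that you construct the dominating function explicitly, whereas the paper merely asserts its existence.
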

\begin{proof}
The Plancherel identity and the definition of $\Dr$ by Fourier
 transform imply
\begin{equation}\label{eq:planche}
  \big\| \Dr \phi - \phi \big\|^2_{H^\alpha(\RN)} 
   = \big \| (1+|\xi|^2)^{\alpha/2} (\vert \xi\vert ^{2r} - 1) \hat{\phi} \big\|^2_{L^2(\RN_\xi)}.
\end{equation}
Consequently, since $\hat{\phi}$ belongs to the Schwartz class $\mathcal S(\RN)$ of rapidly
decreasing functions, we can find a positive 
$L^1(\RN_\xi)$-function $g$ such that 
$$ 
  \big\vert (1+|\xi|^2)^\alpha (\vert\xi\vert^{2r}-1)^2\hat{\phi}^2(\xi)\big\vert\le g(\xi)\
   \ \text{ for all } \xi\in\RNxi.
$$
Then, noting that $ (1+|\xi|^2)^\alpha (\vert\xi\vert^{2r}-1)^2\hat{\phi}^2(\xi)\xrightarrow{r\searrow 0} 0$
for any $\xi\in \RNxi$, we obtain \eqref{eq:limit_s_zero} via the dominated convergence
theorem and \eqref{eq:planche}.
\end{proof}

It is also important to recall the following (asymptotic) relation 
between the $H^r$- and the $L^2$-norm 
(see \cite{Dine_Pala_Vald} and \cite{mazya-shapo}),
\begin{equation}\label{eq:HsL2}
  \lim_{r\searrow 0}\frac{C(r)}{2}[v]^2_{H^r(\RN)} 
   = \|v\|^2_{L^2(\RN)} 
     \ \hbox{ for any } \ v\in \bigcup_{\tau\in (0,1)} \mathcal{X}_{\tau,0},
\end{equation}
which is clearly related to Lemma \ref{lem:limit_s_zero}. 
%
%
%
%
%
%
%
%
%

%
%
Next, we shall characterize the behavior of the (weak) fractional 
Laplacian $\Ar$ as the index $r$ goes to 0 
(cf.~Lemma \ref{lem:limit_s_zero}). Indeed, we 
can prove that $\Ar$ tends to the identity operator in a
suitable way.
\begin{lemma} \label{cor:weak_limit_s_zero}
 Let $\{r_k\} \subset (0,1)$ be a sequence with 
 $r_k\searrow 0$ as $k\nearrow +\infty$.
 Let $\{v_k\} \subset \LO$ be a sequence such that $v_k \in D(\Ark)$ 
 for all $k\in \NN$. Moreover, let us assume both $\{v_k\}$ 
 and $\{\Ark v_k\}$ to be uniformly bounded in $\LO$,
 respectively. 
 Then, denoting by $v$ the weak limit of $v_k$ 
 in $\LO$ (up to a non-relabeled subsequence of $k$),
 we have
 \begin{equation} \label{eq:weak_limit_s_zero}
   \Ark v_k \xrightarrow{k\nearrow +\infty} v \ \hbox{ weakly in } 
    \LO. 
 \end{equation}
\end{lemma}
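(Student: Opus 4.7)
The plan is to extract weak limits and identify them by testing against smooth compactly supported functions, using the symmetry of the bilinear form defining $\mathfrak{A}_r$ together with the strong convergence $(-\Delta)^{r_k}\phi \to \phi$ guaranteed by Lemma~\ref{lem:limit_s_zero}.

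First, since $\{v_k\}$ and $\{\mathfrak{A}_{r_k} v_k\}$ are bounded in $H_0$, up to a (non-relabeled) subsequence, I obtain $v_k \weakto v$ and $\mathfrak{A}_{r_k} v_k \weakto \xi$ weakly in $H_0$, for some $\xi \in H_0$. It then suffices to show $\xi = v$, after which the uniqueness of the limit propagates weak convergence of $\mathfrak{A}_{r_k} v_k$ along the original sequence in the standard way.

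To identify $\xi$, I pick an arbitrary test function $\phi \in \DO$. Since $\phi \in \DO \subset \Xzrk$ and $v_k \in D(\Ark) \subset \Xzrk$, formula \eqref{eq:distr_lapl} yields
\begin{equation*}
(\Ark v_k, \phi) = \langle \Ark v_k, \phi \rangle = \frac{C(r_k)}{2}\iint_{\RdN} K_{r_k}(x-y)(v_k(x)-v_k(y))(\phi(x)-\phi(y)) \, \dx \, \dy,
\end{equation*}
and by the manifest symmetry of the right-hand side in $(v_k,\phi)$, this equals $\langle \Ark \phi, v_k \rangle$. Since $\phi$ is smooth and compactly supported (hence in $\Sch$), $\Ark\phi$ coincides with the strongly defined $\Drk \phi \in L^2(\RN)$, as shown by the computation \eqref{eq:intparts1} letting $\epsi \searrow 0$. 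Therefore
\begin{equation*}
(\Ark v_k, \phi) = \int_{\RN} v_k(x) \, \Drk \phi(x) \, \dx.
\end{equation*}

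Now I pass to the limit $k\nearrow \infty$ in both sides. On the left-hand side, the weak $L^2$-convergence $\Ark v_k \weakto \xi$ gives $(\Ark v_k, \phi) \to (\xi,\phi)$. On the right-hand side, Lemma~\ref{lem:limit_s_zero} furnishes $\Drk \phi \to \phi$ strongly in $L^2(\RN)$, which together with the weak convergence $v_k \weakto v$ in $L^2(\RN)$ yields
\begin{equation*}
\int_{\RN} v_k \, \Drk \phi \, \dx \longrightarrow \int_{\RN} v \, \phi \, \dx = (v,\phi).
\end{equation*}
Equating the two limits gives $(\xi,\phi)=(v,\phi)$ for every $\phi\in \DO$. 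By density of $\DO$ in $\LO$, I conclude $\xi = v$ in $\LO$, which is exactly \eqref{eq:weak_limit_s_zero}.

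The main obstacle is the justification of the ``integration-by-parts'' identity moving $\Ark$ from $v_k$ onto $\phi$; once this is in place the argument is a routine weak/strong pairing. The key ingredient that makes the identification possible is precisely Lemma~\ref{lem:limit_s_zero}, which guarantees that, although $\Ark$ is a nontrivial non-local operator, acting on a fixed smooth test function it converges strongly to the identity as $r_k\searrow 0$, matching the operator identity one expects formally.
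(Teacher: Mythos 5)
Your proof is correct and follows essentially the same route as the paper's: extract weak $L^2$-limits of both $v_k$ and $\Ark v_k$, transfer $\Ark$ onto a smooth test function $\phi$ supported in $\Omega$ via the symmetry of the bilinear form (equivalently, \eqref{eq:distr_lapl3}), and then conclude by pairing the strong convergence $\Drk\phi\to\phi$ from Lemma~\ref{lem:limit_s_zero} with the weak convergence $v_k\rightharpoonup v$. Your write-up is slightly more explicit than the paper's (spelling out the identification $\Ark\phi=\Drk\phi$ for $\phi\in\DO$, the density of $\DO$ in $\LO$, and the sub-subsequence/uniqueness step), but there is no substantive difference in method.
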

\begin{proof}
The boundedness of $\Ark v_k$ in $\LO$  
entails the existence of $w \in \LO$ such 
that 
\begin{equation*}
  \Ark v_k \xrightarrow{k\nearrow +\infty} w \ \hbox{ weakly in } 
   \LO, 
\end{equation*}
up to a non-relabeled subsequence of $k$ (of course, we can assume
that this subsequence is extracted from the subsequence along 
which $v_k$ weakly converges to $v$ in $\LO$). 
To conclude, we have to prove that $w \equiv v$ a.e.~in $\Omega$.
We test the weak convergence $\Ark v_k\to w$ 
by $\vf\in C^{\infty}(\RN)$ with support in $\Omega$.
Recalling Lemma~\ref{lem:limit_s_zero} and \eqref{eq:distr_lapl3},
we then have
\begin{align*}
  \int_{\RN} w \vf \, \dx 
  & = \lim_{k\nearrow +\infty} \int_{\RN} \Ark v_k \vf \,\dx \\
 %
  & = \lim_{k\nearrow +\infty} \int_{\RN}v_k \Ark \vf\, \dx
  = \int_{\RN} v \vf \, \dx , 
\end{align*} 
which implies $w\equiv v$ almost everywhere in $\RN$.
\end{proof}
\noindent%

The optimal constant of the Poincar\'e-type inequality
\eqref{eq:poincare} 
depends on the first eigenvalue $\lambda_1(r)$
of the fractional eigenvalue problem 
\begin{equation}\label{eq:eigen}
  v \in \Xzr, \quad
   \Ar v = \lambda v \ \hbox{ in } \LO, 
\end{equation}
which also does not mean that $\Ar v$ vanishes outside $\Omega$ (see \S
\ref{Ss:FF}). More precisely, the optimal value of $c_P(r)$ is given as
$1/\lambda_1(r)$, that is,
\begin{equation}\label{eq:poincare-opt}
\|v\|_{\LO}^2 \leq \dfrac 1 {\lambda_1(r)} \|v\|_{\Xzr}^2
\quad \mbox{ for all } \ v \in \Xzr.
\end{equation}
The first eigenvalue $\lambda_1(r)$ 
is characterized in the next lemma, which comprises results
from~\cite{serva-valdi11} and~\cite{serva-valdi_eigen}
and clarifies the spectral properties of the fractional Laplacian.
%
\begin{lemma}[\cite{serva-valdi11},~\cite{serva-valdi_eigen}]\label{lem:1eigen}
 Let $r \in (0,1)$. Then the fractional eigenvalue problem 
 \eqref{eq:eigen} admits a first eigenvalue $\lambda_1(r)$ which is 
 strictly positive, simple, isolated, and can be characterized as 
 \begin{equation} \label{eq:cara_eigen}
   \lambda_{1}(r)
     = \min_{u\in \Xzr, \|u\|_{\LO}=1} \frac{C(r)}{2}[u]^2_{H^r(\RN)}
     = \min_{u\in \Xzr \setminus\left\{0\right\}}  
          \frac{C(r)}{2} \frac{[u]^2_{H^r(\RN)}}{\|u\|^2_{\LO}}.
 \end{equation}
 Moreover, there exists a unique positive first
 eigenfunction $e_1\in \Xzr$
 which satisfies $\| e_1 \|_{\LO} = 1$, and 
 attains the minimum in \eqref{eq:cara_eigen}.

 Finally, denoting with $\lambda_1$ the first eigenvalue of 
 the (standard) Dirichlet problem  
 \begin{equation}\label{eq:eigen_laplacian}
   - \Delta v = \lambda v \ \hbox{ in } \Omega,\quad
      v = 0 \ \hbox{ on } \partial\Omega,
 \end{equation}
 then for any $r \in (0,1)$ there holds 
 \begin{equation} \label{eq:eigen_upper}
   \lambda_1(r) < \lambda_1^r.
 \end{equation}
\end{lemma}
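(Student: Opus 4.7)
The plan is to combine the direct method of the calculus of variations with a strong maximum principle for the fractional Laplacian and a Jensen-type inequality. I would organize the argument in four steps, presented in three paragraphs.

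\emph{Existence and variational characterization.} I would set
\[
  \lambda_1(r) := \inf\Bigl\{\tfrac{C(r)}{2}[v]^2_{H^r(\RN)}\;:\;v\in\Xzr,\;\|v\|_{\LO}=1\Bigr\},
\]
which is strictly positive thanks to~\eqref{eq:poincare}. A minimizing sequence $\{v_n\}$ is bounded in $\Xzr$; the compactness of the embedding $\Xzr\hookrightarrow\LO$ (a consequence of~\eqref{eq:defX0_bis} together with the compact embedding $H^r(\Omega)\hookrightarrow L^2(\Omega)$) yields, along a subsequence, $v_n\to e_1$ strongly in $\LO$ and weakly in $\Xzr$, with $\|e_1\|_{\LO}=1$. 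Weak lower semicontinuity of the convex functional $G_r$ in~\eqref{defiG} forces $e_1$ to attain the minimum, giving both equalities in~\eqref{eq:cara_eigen} (the homogeneous form follows by rescaling). The Euler--Lagrange condition $\langle\Ar e_1,\phi\rangle = \lambda_1(r)(e_1,\phi)$ for every $\phi\in\Xzr$, combined with $\lambda_1(r)e_1\in\LO$, yields $e_1\in D(\Ar)$ and \eqref{eq:eigen} in $\LO$.

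\emph{Positivity, simplicity and isolation.} The pointwise identity
\[
  (v(x)-v(y))^2 = (v^+(x)-v^+(y))^2 + (v^-(x)-v^-(y))^2 + 2\bigl(v^+(x)v^-(y)+v^+(y)v^-(x)\bigr)
\]
implies $[v]^2_{H^r(\RN)}\ge [v^+]^2_{H^r(\RN)} + [v^-]^2_{H^r(\RN)}$, so whenever both $v^\pm$ are nonzero and $v$ is a first eigenfunction, each of $v^\pm$ is itself a first eigenfunction (a short computation on the Rayleigh quotient). In particular $|e_1|$ is a nonnegative minimizer, and the strong maximum principle for the weak fractional Laplacian (see~\cite{serva-valdi_eigen}) upgrades this to $|e_1|>0$ a.e.\ in $\Omega$; relabelling $e_1:=|e_1|$ gives a strictly positive first eigenfunction. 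For simplicity, given any other first eigenfunction $\tilde e_1$, set $w:=\tilde e_1 - c\,e_1$ with $c:=(\tilde e_1,e_1)/\|e_1\|^2_{\LO}$, so that $(w,e_1)=0$ and $w$ is again a first eigenfunction. If both $w^\pm$ were nonzero, they would be positive first eigenfunctions with disjoint supports, contradicting the strong maximum principle; if only one is nonzero, then $w$ is of constant sign and the strong maximum principle forces $w$ to be strictly of that sign a.e., contradicting $(w,e_1)=0$ and $e_1>0$ a.e. Hence $w\equiv 0$, i.e.\ $\tilde e_1 = c\,e_1$, proving simplicity. For isolation, $\Ar$ is self-adjoint and positive on $\LO$ with compact resolvent (again by the compact embedding $\Xzr\hookrightarrow\LO$), so its spectrum is a nondecreasing sequence $0<\lambda_1(r)\le\lambda_2(r)\le\ldots\to+\infty$ of eigenvalues of finite multiplicity, and simplicity forces $\lambda_2(r)>\lambda_1(r)$.

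\emph{Strict bound $\lambda_1(r)<\lambda_1^r$.} Let $\varphi_1\in H^1_0(\Omega)\subset\Xzr$ be the $L^2$-normalized first eigenfunction of $-\Delta$ with homogeneous Dirichlet data, extended by zero outside $\Omega$. Using $\varphi_1$ as a test function in~\eqref{eq:cara_eigen} and invoking the Fourier representation~\eqref{eq:distr_lapl3},
\[
  \lambda_1(r)\;\le\; \int_{\RN}|\xi|^{2r}\,|\hat\varphi_1(\xi)|^2\,\dxi.
\]
By Plancherel $\int_{\RN}|\hat\varphi_1|^2\,\dxi=1$, so $\mu:=|\hat\varphi_1|^2\,\dxi$ is a probability measure on $\RNxi$. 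Strict concavity of $t\mapsto t^r$ on $(0,+\infty)$ for $r\in(0,1)$ combined with Jensen's inequality yields
\[
  \int_{\RN}(|\xi|^2)^r\,d\mu \;<\; \Bigl(\int_{\RN}|\xi|^2\,d\mu\Bigr)^r = \|\nabla\varphi_1\|^{2r}_{L^2(\RN)} = \lambda_1^r,
\]
the inequality being strict since $\hat\varphi_1$ cannot be supported on a single sphere $\{|\xi|=\rho\}$ ($\varphi_1$ being compactly supported and nontrivial). The main technical obstacle in this plan is the strong maximum principle invoked in the positivity/simplicity step, which lies outside the purely variational framework of Section~\ref{sec:not} and must be imported from the nonlocal PDE literature; once it is granted, the remaining steps are routine applications of the direct method and Jensen's inequality.
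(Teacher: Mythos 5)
The paper does not supply its own proof of this lemma: it is stated with citations to~\cite{serva-valdi11} and \cite{serva-valdi_eigen} and the subsequent text simply invokes it. Your argument is, therefore, to be compared against the standard proofs in those references, and it is essentially correct and aligned with them.

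A few remarks on the details. Step one (direct method plus compactness of $\Xzr\hookrightarrow\LO$ plus weak lower semicontinuity of $G_r$, then Euler--Lagrange) is the standard route to \eqref{eq:cara_eigen}, and the resulting $e_1\in D(\Ar)$ is obtained exactly as you say. In the simplicity step, the ``disjoint supports'' of $w^\pm$ are of course automatic from $w^+w^-\equiv0$; in fact the cleanest way to dispose of the first subcase is to notice that, if $w$ is a minimizer and both $w^\pm\neq 0$, equality must hold in $[w]^2_{H^r}\ge[w^+]^2_{H^r}+[w^-]^2_{H^r}$, and since the cross term equals $2\iint K_r(x-y)\,w^+(x)w^-(y)\,\dx\dy$ with $K_r>0$, this forces $w^+\equiv0$ or $w^-\equiv0$ directly, so the maximum principle is not strictly needed there. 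You do still need a strong maximum principle for the weak fractional Dirichlet Laplacian in the second subcase (to pass from $w\ge0$, $w\not\equiv0$ to $w>0$ a.e.\ in $\Omega$, and hence $(w,e_1)>0$); this is an external ingredient, as you correctly flag. The isolation argument via compact resolvent is standard. Finally, the Jensen-type argument for $\lambda_1(r)<\lambda_1^r$ is exactly the mechanism behind \cite{serva-valdi_eigen}: testing with the $L^2$-normalized Dirichlet eigenfunction $\varphi_1$ (extended by zero, so $\varphi_1\in H^1_0(\Omega)\subset\Xzr$), identifying $\frac{C(r)}{2}[\varphi_1]^2_{H^r(\RN)}$ with $\int|\xi|^{2r}|\hat\varphi_1|^2\dxi$ via \eqref{eq:semi_norm}, applying Jensen to the concave map $t\mapsto t^r$ against the probability measure $|\hat\varphi_1|^2\,\dxi$, and using Plancherel to get $\int|\xi|^2|\hat\varphi_1|^2\dxi=\|\nabla\varphi_1\|^2_{L^2}=\lambda_1$. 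The strictness holds because $|\hat\varphi_1|^2\,\dxi$ is absolutely continuous and cannot charge a sphere, which has Lebesgue measure zero, so $|\xi|^2$ is not $\mu$-a.e.\ constant. In short: your proof is complete modulo the imported strong maximum principle, and follows the same route as the cited sources.
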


For our purposes it will also be extremely important
to understand the asymptotics of $\lambda_1(r)$ with respect 
to $r\searrow 0$. The following proposition clarifies the situation. 
In particular, we prove that, as $r\searrow 0$, the corresponding sequence of 
first eigenvalues of $\Ar$ tends to $1$. Note that, being $\Ar$
an approximation of the identity as $r\searrow 0$, then the result
is not completely unexpected. However, at least to our knowledge,
this was not observed before. Moreover, due to some lack in compactness
in the sequence of the first eigenfunctions of $\Ar$ the proof 
is not straightforward and requires some precise estimates.

\smallskip
\begin{propo}
\label{prop:asy_1eigen}
 Let $\{r_k\}\subset (0,1)$ satisfy $r_k\searrow 0$ as $k\nearrow +\infty$.
 Then we have
 \begin{equation}\label{eq:lim1eigen}
   \lim_{k\nearrow +\infty}\lambda_1(r_k) =  1.
 \end{equation}
\end{propo}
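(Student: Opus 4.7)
The plan is to obtain matching upper and lower bounds both tending to $1$. For the upper bound, I would simply invoke \eqref{eq:eigen_upper}: since $\lambda_1$ is a fixed positive constant, $\lambda_1(r_k) < \lambda_1^{r_k} = e^{r_k \log \lambda_1} \to 1$ as $r_k \searrow 0$. Alternatively, one can test the variational characterization \eqref{eq:cara_eigen} with any fixed $\varphi \in C^\infty_0(\Omega)$ with $\|\varphi\|_{\LO}=1$ and pass to the limit using \eqref{eq:HsL2}, which gives $\limsup_k \lambda_1(r_k) \le \|\varphi\|_{\LO}^2 = 1$.

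The lower bound is the delicate step. I would work in the Fourier picture, where \eqref{eq:semi_norm} and \eqref{eq:cara_eigen} give
\[
\lambda_1(r) \,=\, \min_{u\in \Xzr,\ \|u\|_{\LO}=1}\ \int_{\RN} |\xi|^{2r}\,|\hat u(\xi)|^2 \, \dxi.
\]
The key observation is that any competitor $u$ is supported in the bounded set $\Omega$, so $\hat u$ is uniformly bounded in $L^\infty(\RN)$: Cauchy--Schwarz yields $\|\hat u\|_{L^\infty} \le (2\pi)^{-N/2}|\Omega|^{1/2}\|u\|_{\LO}$. Hence for any $R>0$ and any admissible $u$,
\[
\int_{|\xi|<1/R} |\hat u(\xi)|^2 \,\dxi \,\le\, C_\Omega\, R^{-N},
\]
where $C_\Omega>0$ depends only on $N$ and $|\Omega|$. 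Since $|\xi|^{2r} \ge R^{-2r}$ on $\{|\xi|\ge 1/R\}$, Plancherel's identity then gives
\[
\int_{\RN} |\xi|^{2r}|\hat u|^2 \,\dxi \,\ge\, R^{-2r}\int_{|\xi|\ge 1/R} |\hat u|^2 \,\dxi \,\ge\, R^{-2r}\bigl(1 - C_\Omega R^{-N}\bigr).
\]
Given $\epsi>0$, I would first choose $R=R(\epsi,\Omega)$ so large that $C_\Omega R^{-N}<\epsi$, and then $r^*>0$ so small that $R^{-2r}>1-\epsi$ for every $r\in(0,r^*)$. Since the resulting estimate is uniform in $u$, this yields $\lambda_1(r)\ge (1-\epsi)^2$ for all $r<r^*$, whence $\liminf_k \lambda_1(r_k) \ge (1-\epsi)^2$, and letting $\epsi\searrow 0$ closes the proof.

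The principal obstacle, as stressed in the paper before the statement, is that a direct spectral analysis based on the sequence $\{e_{1,r_k}\}$ of normalized first eigenfunctions is impaired by lack of compactness as $r_k \searrow 0$: Lemma \ref{cor:weak_limit_s_zero} only provides weak $L^2$-convergence to some limit $e^*$ satisfying $e^* = \bigl(\lim\lambda_1(r_k)\bigr)e^*$, so everything would reduce to showing $e^*\not\equiv 0$, which is far from evident since the $H^{r_k}$-seminorm of $e_{1,r_k}$ degenerates to a mere $L^2$-bound in the limit. The Fourier-side quantitative estimate above bypasses this issue entirely by exploiting, uniformly in $u$, the forced low-frequency smallness of $\hat u$ whenever $u$ is supported in a bounded set.
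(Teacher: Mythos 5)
Your proof is correct and follows essentially the same approach as the paper's: both hinge on the uniform $L^\infty$ bound on $\hat u$ forced by the compact support of admissible competitors, combined with a low/high-frequency split of $\int_{\RN}|\xi|^{2r}|\hat u(\xi)|^2\,\dxi$ (the paper uses \eqref{eq:eigen_upper} for the upper bound, as you do). The only difference is cosmetic: the paper packages the frequency split as a separate interpolation lemma with an optimized cutoff radius and an explicit constant $\kappa(N,\alpha)\to 1$, while you fix the cutoff $R$ first and then shrink $r$, which is a slightly more elementary but equivalent way of running the same estimate.
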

\begin{proof}
To simplify the notation, for any $k\in \mathbb{N}$
we denote by $\lambda_k$ and $v_k$ 
the first eigenvalue and the (normalized in $\LO$) first
 eigenfunction of \eqref{eq:eigen}, respectively. Then,
the couple $(\lambda_k, v_k)$ 
solves the eigenvalue problem
\begin{equation} \label{eq:eigen1}
  \Ark v_k = \lambda_k v_k \ \hbox{ in } \LO.
\end{equation}
As a consequence, the only uniform (in $k$) estimate
on which we can rely is $\|v_k\|_{L^2(\RN)}=1$. 
Hence, the only convergence we expect
for $v_k$ is a \emph{weak} $L^2$-convergence. Indeed, this is enough
to pass to the limit in \eqref{eq:eigen1} 
(recall Lemma~\ref{cor:weak_limit_s_zero}).
On the other hand, since a priori we cannot exclude that the weak limit 
of $v_k$ is zero, we are not able, at this level, to conclude that 
$\lambda_k\to 1$. For this reason, we have to use a different approach.
First of all, thanks to \eqref{eq:eigen_upper} 
of Lemma~\ref{lem:1eigen}, we only need to prove a 
lower bound for the limit in \eqref{eq:lim1eigen}, namely
\begin{equation}\label{eq:lowelim1eigen}
  \liminf_{k\nearrow +\infty}\lambda_k\ge 1.
\end{equation}
To this aim, we test \eqref{eq:eigen1} by $v_k$. Using that $\|v_k\|_{\LO}=1$
and recalling \eqref{eq:distr_lapl3} and \eqref{eq:semi_norm}, we get
\begin{align}\label{eq:eigenproof1}
  \lambda_k & = \lambda_k \|v_k\|^2_{\LO}
 = \langle \Ark v_k, v_k \rangle_{\Xzrk}\\
 \nonumber 
  & = \int_{\RN}\vert \Drkm v_k \vert^2 \, \dx
    = \int_{\RN_\xi}\vert \xi\vert^{2 r_k}\vert\hat{v}_k(\xi)\vert^{2} \, \hbox{d}\xi.
\end{align}
For each $k\in \mathbb{N}$, set $f_k(\xi)=\vert
 \hat{v}_k(\xi)\vert^{2}$.
Then $f_k\ge 0$ a.e.~in $\RN_\xi$ and, by Plancherel's identity,
$f_k\in L^1(\RN_\xi)$ with $\int_{\RN_\xi}f_k(\xi)\,\hbox{d}\xi=1$
for any $k\in\mathbb{N}$. Moreover, 
by~\eqref{eq:eigenproof1} and \eqref{eq:eigen_upper},
the sequence of the $2 r_k$-moments of $f_k$ is uniformly
bounded with respect to $k$. Namely, we have
\begin{equation} \label{eq:mom_finite}
  \int_{\RN_\xi}\vert \xi\vert^{2 r_k}f_k(\xi) \, \hbox{d}\xi
   = \lambda_k<(\lambda_1)^{r_k}.
\end{equation}
Now, combining this information with the fact that 
$v_k$ is zero outside $\Omega$, we can easily deduce that $f_k$ lies
in $L^\infty(\RN_\xi)$. Indeed, using $\vert e^{-ix\cdot \xi}\vert \le 1$ 
together with the H\"older inequality, we obtain
\begin{align} \label{eq:Linftyf}
  f_k(\xi) & = \frac{1}{(2\pi)^N} \left| \int_{\RN} e^{-ix\cdot \xi} v_k(x)\,\dx \right|^2 
   = \frac{1}{(2\pi)^N}\left| \int_{\Omega} e^{-ix\cdot \xi} v_k(x) \,\dx\right|^2 \\
 \nonumber
  & \le \frac{|\Omega|}{(2\pi)^N} \|v_k\|^2_{\LO} 
   = \frac{\vert \Omega\vert}{(2\pi)^N}, \quad \mbox{ for all } \, \xi \in \RN_\xi.
\end{align}
At this point, we need to estimate the $L^1$-norm of $f_k$
in a quantitative way. To this purpose, we use the following
interpolation lemma, whose proof is based on a technique widely
used in the kinetic theory community (alternatively, one could 
refer to \cite[Lemma 2.1]{yolcu_yolcu},
where a similar inequality is shown in a different way):
\begin{lemma}\label{lemma:interp}
 Given $f\in L^1(\RN)\cap L^\infty(\RN)$ with
 $\int_{\RN}\vert x\vert^\alpha \vert f(x)\vert \,\dx < +\infty$,  
 where $\alpha>0$, there holds
 \begin{equation}\label{eq:inter}
   \| f \|_{L^1(\RN)}\le \kappa (N,\alpha) 
    \big\| \vert x\vert^\alpha f \big\|_{L^1(\RN)}^{\frac{N}{N+\alpha}}
    \|f \|_{L^\infty(\RN)}^{1-\frac{N}{N+\alpha}},
 \end{equation} 
 where 
 $$
   \kappa(N,\alpha) = \alpha^{-\frac{\alpha}{\alpha +N}}(\alpha+
 N)N^{-\frac{N}{N+\alpha}} d^{\frac{\alpha}{N+\alpha}}
 $$
 and $d = d(N)$ is the volume of the unit ball in $\RN$.
\end{lemma}
\begin{proof}
To simplify the presentation, we set $L:=\|f \|_{L^\infty(\RN)}$ and 
$M:=\int_{\RN}\vert x\vert^\alpha \vert f(x)\vert \,\dx$.
We compute, for $R>0$ to be chosen later, 
\begin{equation}\label{eq:inter1}
  \int_{\RN}\vert f\vert \,\dx 
   = \int_{\vert x\vert\le R} \vert f\vert \, \dx 
      + \int_{\vert x\vert> R}\vert f\vert \,\dx 
   \le d R^N L + R^{-\alpha}M.
\end{equation}
Now, optimizing with respect to $R$ the function 
$g(R):= d R^N L + R^{-\alpha}M$, we find 
\begin{equation}\label{eq:inter2}
  \| f \|_{L^1(\RN)} \le \inf_{R>0}g(R) 
   = g(\bar R), \ \hbox{ with } \bar R 
   = \Big( \frac{\alpha M}{dNL} \Big)^{1/(N+\alpha)}.
\end{equation}
Hence, computing $g(\bar R)$, we readily get \eqref{eq:inter}.
\end{proof}
\noindent
{\it Continuation of Proof of Proposition \ref{prop:asy_1eigen}.} %
Applying the lemma, with $f=f_k$, $\alpha=2 r_k$, 
$\| \vert \xi\vert^{2r_k} f_k\|_{L^1(\RN_\xi)} = \lambda_k$
and $\|f_k\|_{L^\infty(\RN)}\le \frac{\vert\Omega\vert}{(2\pi)^N}$,
we then get
\begin{equation}\label{eq:L1}
  \|f_k\|_{L^1(\RN_\xi)}
   \le \kappa(N,2r_k) \lambda_k^{\frac{N}{N + 2 r_k}} 
      \left(
       \frac{\vert\Omega\vert}{(2\pi)^N}
       \right)^{\frac{2r_k}{N+2r_k}},
\end{equation}
which, together with the fact that $\|f_k\|_{L^1(\RN_\xi)}=1$ 
by construction, implies
\begin{equation}\label{eq:lower_lambdak}
  \lambda_k \geq 
\kappa(N,2r_k)^{-\frac{N+2r_k}N} \left(
\dfrac{(2\pi)^N}{|\Omega|}
\right)^{\frac{2r_k}N}.
\end{equation}
Hence, letting $k\nearrow +\infty$ and noting that 
$\lim_{k\nearrow +\infty}\kappa(N,2r_k)=1$, we get
 \eqref{eq:lowelim1eigen}, which implies the thesis. 
\end{proof}


\section{Assumptions and statement of main results}
\label{sec:main}

Let us start with listing our hypotheses 
on the data of the problem. First of all, we make precise
the choice of the confining potential by choosing
\begin{equation}\label{eq:confining}
  W(v) = \frac1p |v|^p - \frac12 v^2,
   \quad p\in (1,\infty) \setminus \{2\}.
\end{equation}
We also set $\hat\beta(v):= \frac1p |v|^p$ and 
$\beta(v):=\hat\beta'(v) = |v|^{p-1}\sign v$, for brevity.
In general, we note the case $p>2$ as the {\it coercive}\/
case, while for $p\in(1,2)$ we speak of a {\it non-coercive}\/
potential. Indeed, $W$ is unbounded from below in the latter
situation.
As noted in Introduction, the coercive case includes,
up to an additive constant, the standard double well potential
\begin{equation}\label{eq:double_well} 
  W(v)=\frac{1}{4}(v^2-1)^2.
\end{equation}
Next, for $v\in \LO$, we introduce the energy functional
\begin{equation}\label{eq:def_energy}
  \mathbb{E}_{\sigma}(v):= 
   \frac12 \| v \|_{\Xzsi}^2
    + \int_{\Omega} W(v) \, \dx,
\end{equation}
whenever it makes sense. In particular,
in the case $p>2$, the {\it domain}\/ of $W$,
i.e., the set where it takes finite values, is given by
\begin{equation}\label{defiEsigma}
  \Esig = \big\{v \in \LO \hbox{ such that } \Esi(v) < +\infty\big\}
   = \Xzsi \cap L^p(\RR^N).
\end{equation}
Of course, $\Esig$ is a Banach space with the natural norm
\begin{equation}\label{norEsigma}
  \| v \|_{\Esig} := \| v \|_{\Xzsi} 
   + \| v \|_{L^p(\RN)}.
\end{equation}
If $p\le \frac{2N}{N-2\sigma}$, then by Sobolev's embeddings
$\Esig$ coincides in fact with $\Xzsi$ (hence we can use
the norm of $\Xzsi$ instead of \eqref{norEsigma}).
On the other hand, in the non-coercive case $p\in(1,2)$, the
functional $\Esig$ may be unbounded from below. 
In that situation, existence and uniqueness of 
(global in time) solutions still hold.
However, when dealing with the singular limits, 
some difficulties arise from the lack of coercivity.

\bigskip

In what follows, we will look for solutions taking values 
in the energy space $\Esig$. Correspondingly, we ask that the
same regularity is satisfied by the initial datum:
\begin{equation}\label{eq:cond_iniz}
  u_0 \in \Esig.
\end{equation}
Next, we introduce a weak (energy) formulation of the system
\eqref{eq:fCH}-\eqref{eq:chem_pot}. Here and henceforth, the notation
$C_w([0,T];X)$ will represent the class of continuous functions on
$[0,T]$ in the weak topology of a normed space $X$. 
\begin{defi}\label{def:weaksol}
 We say that $(u,w)$ is a weak solution to the Cauchy-Dirichlet problem\/
 \eqref{eq:fCH}-\eqref{eq:bc} for the fractional Cahn-Hilliard system
 if, for all $T>0$, we have
 \begin{align}\label{regou}
   & u \in C_w([0,T];\Esig) \cap C([0,T];\LO) \cap W^{1,2}(0,T;\Xzs'), \\
  \label{regow}
   & w \in L^2(0,T;\Xzs);
 \end{align}
 moreover, the couple $(u,w)$ satisfies, 
 a.e.~in $(0,T)$, the following weak formulation of
 \eqref{eq:fCH}-\eqref{eq:chem_pot}\/{\rm :}
 \begin{alignat}{4} \label{eq:ch_weak}
   &\partial_t u + \As w = 0 \ &\hbox{ in }& \Xzs', \\
  \label{eq:cp_weak}
   &w = \Asig u + \B(u) - u\ &\hbox{ in }& \Esig',
 \end{alignat}
where $\B(u)$ denotes the bounded linear functional on $L^p(\mathbb
 R^N)$ defined by
$$
\langle \B(u), v \rangle_{L^{p}(\mathbb R^N)} 
= \int_\Omega \beta(u(x)) v(x) \; \d x
\quad \mbox{ for } \ v \in L^p(\mathbb R^N), 
$$
and, finally, the initial condition \eqref{eq:iniz_bc} holds in the
 following sense\/{\rm :}
 \begin{equation}\label{init}
  u(t) \to u_0 \quad \mbox{ strongly in } \LO \mbox{ and weakly in }
  \Esig \mbox{ as } \ t \searrow 0.
 \end{equation}
\end{defi}

Correspondingly, we can prove 
the following existence and uniqueness result:
\begin{theorem}\label{th:eu}
 Let $s,\sigma \in (0,1)$, $p\in (1,\infty) \setminus \{2\}$, and assume\/
 \eqref{eq:confining} and\/ \eqref{eq:cond_iniz}. Then, the fractional
 Cahn-Hilliard system \eqref{eq:fCH}-\eqref{eq:bc} admits a unique
 weak solution $(u,w)$ in the sense of Definition~\ref{def:weaksol},
 which additionally satisfies
 \begin{align}\label{enineq-0}
    \|\beta(u(\cdot,t))\|_{\LO}^2
     \leq 2 \left( \|w(t)\|_{\LO}^2
         + \|u(t)\|_{\LO}^2 \right)
    \quad \mbox{for a.e.~} t \in (0,T),\\
    \beta(u) \in L^2(0,T;\LO).
  \label{regobeta}
 \end{align}
 Moreover, $u(t) := u(\cdot,t)$ and $\Esi(u(t))$ are right-continuous on
 $[0,T)$ with respect to the strong topologies of $\Esig$ and $\mathbb R$,
 respectively, and the following energy inequality holds
 true\/{\rm :}
 \begin{equation}\label{energy_inequality}
   \|w(t)\|_{\Xsz}^2 + \dfrac{\d}{\d t} \Esi(u(t)) \leq 0
    \quad \mbox{for a.e.~} t \in (0,T).
\end{equation}
In particular, if $\sigma \geq s$, then $u \in C([0,T];\Esig)$ and
 $\Esi(u(t))$ is absolutely continuous on $[0,T]$\/{\rm ;} moreover, 
 the inequality \eqref{energy_inequality} can be replaced by an
 equality, namely we have
$$
   \|w(t)\|_{\Xsz}^2 + \dfrac{\d}{\d t} \Esi(u(t)) = 0
    \quad \mbox{for a.e.~} t \in (0,T).
$$
\end{theorem}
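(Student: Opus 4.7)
\emph{Plan.} I would prove Theorem~\ref{th:eu} by an implicit (or semi-implicit) Euler time-discretization scheme that respects the $\Xzs'$-gradient-flow structure of the system (formally, $\partial_t u = -\As D\Esi(u)$), followed by compactness and monotonicity arguments for the passage to the limit, and a duality contraction in $\Xzs'$ for uniqueness; the additional bound on $\beta(u)$ comes from testing the equation for $w$ by $\beta(u)$ itself.

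\emph{Step 1: Discrete scheme and compactness.} Fix $\tau>0$ and set $u^0:=u_0$. I construct inductively $(u^n,w^n)$ satisfying
$$\frac{u^n-u^{n-1}}{\tau} + \As w^n = 0 \ \text{in } \Xzs', \qquad w^n = \Asi u^n + \beta(u^n) - u^n \ \text{in } \Esig'.$$
In the coercive case $p>2$, $u^n$ is obtained as the minimizer of $v \mapsto \tfrac{1}{2\tau}\|v-u^{n-1}\|_{\Xzs'}^2 + G_\sigma(v) + \int_\Omega \hbeta(v) - \tfrac12 \|v\|_{\LO}^2$, with $\|\cdot\|_{\Xzs'}$ the dual norm induced by $\As^{-1}$; in the non-coercive case $p\in(1,2)$ I would either truncate $\beta$ as in \eqref{trunc} and let $\varepsilon\to 0$, or treat the indefinite term $-u$ explicitly by $-u^{n-1}$, turning the scheme into a strictly convex minimization. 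Testing the discrete equation by $w^n$, using convexity of $G_\sigma+\int\hbeta$ and the identity $a(a-b)=\tfrac12(a^2-b^2)+\tfrac12(a-b)^2$ for the quadratic piece, yields
$\Esi(u^n) - \Esi(u^{n-1}) + \tau\|w^n\|_{\Xzs}^2 \leq \tfrac12 \|u^n - u^{n-1}\|_{\LO}^2$;
the defect is absorbed via discrete Gronwall, using $u^n-u^{n-1}=-\tau\As w^n$, to yield uniform bounds for the interpolants of $u^n$ in $L^\infty(0,T;\Esig)\cap W^{1,2}(0,T;\Xzs')$ and of $w^n$ in $L^2(0,T;\Xzs)$. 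The Aubin--Lions lemma applied with the compact embedding $\Xzsi\hookrightarrow\LO$ gives strong convergence of (a subsequence of) $u^n$ in $L^2(0,T;\LO)$, hence pointwise convergence, and a Minty/monotonicity argument identifies $\beta(u)$ in the limit, producing a weak solution that also satisfies the energy inequality \eqref{energy_inequality}.

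\emph{Step 2: Uniqueness, bound on $\beta(u)$, continuity, case $\sigma\geq s$.} For uniqueness I take two solutions with the same datum, set $\bar u := u_1-u_2$, $\bar w := w_1-w_2$, and test $\partial_t \bar u + \As \bar w = 0$ by $\As^{-1}\bar u \in \Xzs$ (valid since $\bar u\in\LO\hookrightarrow\Xzs'$), obtaining
$\tfrac12 \tfrac{\d}{\d t}\|\bar u\|_{\Xzs'}^2 + \|\bar u\|_{\Xzsi}^2 + \langle \beta(u_1)-\beta(u_2),\bar u\rangle = \|\bar u\|_{\LO}^2$;
monotonicity of $\beta$ discards the $\beta$-bracket, and the duality interpolation $\|\bar u\|_{\LO}^2 \leq \tfrac{s}{s+\sigma}\|\bar u\|_{\Xzsi}^2 + \tfrac{\sigma}{s+\sigma}\|\bar u\|_{\Xzs'}^2$ (coming from the interpolation identity $\LO=[\Xzsi,\Xzs']_{\sigma/(s+\sigma)}$ plus weighted Young) absorbs the right-hand side and lets me close by Gronwall. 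For \eqref{enineq-0}, I test $w=\Asi u+\beta(u)-u$ by $\beta(u)$, made rigorous through the approximation $\beta_\varepsilon$ of \eqref{trunc} together with the monotonicity \eqref{eq:monotone}: the term $\langle\Asi u,\beta(u)\rangle\geq 0$ is dropped and Cauchy--Schwarz with Young yields $\|\beta(u)\|_{\LO}^2 \leq 2(\|w\|_{\LO}^2 + \|u\|_{\LO}^2)$; the time-integrated bound \eqref{regobeta} follows since $u\in L^\infty(0,T;\LO)$ and $w\in L^2(0,T;\Xzs)\subset L^2(0,T;\LO)$. The right-continuity of $u(t)$ in $\Esig$ and of $\Esi(u(\cdot))$ on $[0,T)$ follows from the weak-$*$ continuity in $\Esig$ provided by \eqref{regou}, combined with lower semicontinuity of $\Esi$ and the energy inequality. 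Finally, when $\sigma\geq s$ the inclusion $\Xzsi\hookrightarrow\Xzs$ makes $w\in L^2(0,T;\Xzs)$ a legitimate tester against $\partial_t u\in L^2(0,T;\Xzs')$ through the chain rule $\langle\partial_t u,w\rangle=\tfrac{\d}{\d t}\Esi(u)$, which upgrades \eqref{energy_inequality} to an equality and $u$ to $C([0,T];\Esig)$.

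\emph{Main obstacle.} The hardest part is the non-coercive regime $p\in(1,2)$: since $\Esi$ may be unbounded below, the fully-implicit variational step is not straightforwardly coercive and must be replaced either by a semi-implicit scheme or by a truncation-and-limit procedure based on \eqref{trunc}, with the Poincar\'e-type inequality \eqref{eq:poincare-opt} playing a role in the uniform bounds. A related subtlety is that $\beta(u)$ cannot be identified in the limit by strong $L^p$-compactness; one must combine a.e.\ convergence from Aubin--Lions with the uniform $L^p$-bound coming from the energy and a Minty-type monotonicity argument. A third, milder, issue is that when $\sigma<s$ the energy $\Esi$ is not regular enough along solutions for the chain rule to hold, which is what forces the inequality formulation in \eqref{energy_inequality} rather than an equality.
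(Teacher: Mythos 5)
Your plan coincides with the paper's proof in its overall architecture — semi-implicit time discretization (you mention this as an alternative, and it is exactly what the paper uses: treating the concave term by $-u_{n-1}$ makes the discrete minimization strictly convex with no need to worry about $p<2$), Aubin--Lions plus Minty's trick for passage to the limit, $\As^{-1}$-testing with an Ehrling-type absorption for uniqueness, and approximate testing of the $w$-equation by (a regularization of) $\beta(u)$ for \eqref{enineq-0}. Two of the details, however, do not go through as written.

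First, the truncation \eqref{trunc} cuts $\beta$ off only at infinity; it does not remove the singularity of $\beta'(r)=(p-1)|r|^{p-2}$ at the origin. Hence for $p\in(1,2)$ the truncated $\beta_\vep$ is \emph{not} Lipschitz, and there is no guarantee that $\beta_\vep(u)\in\Esig$ when $u\in\Esig$, so the test function you propose is not admissible. The paper instead uses the \emph{Yosida} approximation $\beta_\vep=(I-j_\vep)/\vep$, which is globally Lipschitz for any maximal monotone $\beta$; this is what makes the argument uniform in $p$, and the non-expansiveness of the resolvent $j_\vep$ is also needed to identify the weak limit via demiclosedness. Second, for the energy equality when $\sigma\ge s$ you invoke ``the chain rule $\langle\partial_t u,w\rangle=\tfrac{\d}{\d t}\Esi(u)$'' as if it were available, but this is precisely what has to be proved: $\Esi$ is nonconvex and $u$ is differentiable only in $\Xzs'$, so no classical chain rule applies. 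The paper decomposes $\Esi=\Psi-\tfrac12\|\cdot\|_{\LO}^2$ with $\Psi(v)=\tfrac12\|v\|_{\Xzsi}^2+\int_\Omega\hbeta(v)$ convex, proper, lsc and coercive, and then applies the generalized chain-rule Lemma~\ref{lemma:c_r} in the triple $(\Xzs,\LO,\Xzs')$ with the selection $\eta:=w+u\in\partial\Psi(u)$. The lemma requires $u\in L^2(0,T;\Xzs)$ and $\eta\in L^2(0,T;\Xzs)$, and it is exactly the inclusion $\Xzsi\hookrightarrow\Xzs$, valid iff $\sigma\ge s$, that provides these; your explanation of why $\sigma\ge s$ matters is right, but you would need to exhibit this convex-functional chain rule to make the step rigorous. (A smaller point: the interpolation you write with weights $s/(s+\sigma)$, $\sigma/(s+\sigma)$ holds only up to a constant; the paper uses Ehrling's lemma giving $\|v\|_{\LO}^2\le\vep\|v\|_{\Xzsi}^2+C_\vep\|v\|_{\Xzs'}^2$, which suffices.)
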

A proof of the above result will be carried out in 
Section~\ref{sec:conti} by means of time-discretization,
a-priori estimates, and compactness arguments.
It is worth noting that, in
addition to \eqref{regou}-\eqref{regobeta}, one 
could show that weak solutions satisfy parabolic time-smoothing
properties. We will analyze this issue in a forthcoming
paper, where we also plan to consider a more general class of 
potentials~$W$.

Now, we come to the behavior of (families of) solutions
as $\sigma$ tends to $0$. Then, in the coercive case we can
prove the following 
\begin{theorem}[from Cahn-Hilliard to porous medium]\label{th:pm}
 Let $p\in(2,\infty)$, $s \in (0,1)$ and let
 $\{\sigma_k\}\subset (0,1)$ be such that 
 $\sigma_k\searrow 0$ as $k\nearrow +\infty$. Moreover, let us given
 a sequence of initial data $\{u_{0,k}\}$ and $u_0 \in \Xsz'$ satisfying
 \begin{equation}\label{hp:uzk}
   \sup_{k\in \Nz} \mathbb{E}_{\sigma_k}(u_{0,k}) < +\infty, \quad
     u_{0,k} \to u_0 \ \text{ in } \Xzs'.
 \end{equation}
 Let $(u_k,w_k)$ denote the corresponding sequence of 
 unique weak solutions to\/ 
 \eqref{eq:fCH}-\eqref{eq:bc}, with $\sigma = \sigma_k$ and initial
 datum $u_{0,k}$. Then, there exist a (non-relabeled) subsequence
 of $\{k\}$ and a pair of limit functions $(u,w)$ such that
 \begin{alignat}{4}
   u_k &\to u \quad &&\mbox{ weakly star in }
  L^\infty(0,T;\LpO),\label{pm:c:1}\\
  & &&\mbox{ strongly in } L^p(0,T;\LpO) \cap C([0,T];\Xzs'), \\
  & &&\mbox{ weakly in }
  W^{1,2}(0,T; \Xzs'),\label{pm:c:2}\\
  w_k &\to w \quad
  &&\mbox{ weakly in } L^2(0,T;\Xzs).\label{pm:c:4}
 \end{alignat}
 Moreover,
 \begin{equation}\label{pm:regu}
 u \in C_w([0,T];\LpO) \cap W^{1,2}(0,T;\Xzs'),
 \quad \beta(u) = w \in L^2(0,T;\Xzs).
\end{equation}
 Furthermore, $u$ is a (weak) solution to the fractional porous medium 
 equation
 \begin{equation}\label{eq:weak_pm}
    \partial_t u  + \As \beta(u) =  0 \ \hbox{ in } \Xzs',
     \ \text{ a.e.~in }(0,T),
 \end{equation}
 with the initial condition $u|_{t=0}=u_0$, i.e.,
\begin{equation}\label{ic-pm}
u(t) \to u_0 \ \mbox{ strongly in } \Xzs' \ \mbox{ as } \ t
 \searrow 0.
\end{equation}
\end{theorem}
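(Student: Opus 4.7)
My approach is a Minty--Browder monotonicity scheme in four steps.

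\emph{A priori estimates.} Integrating the energy inequality~\eqref{energy_inequality} for the pair $(u_k,w_k)$ and absorbing the negative quadratic term in $\mathbb{E}_{\sigma_k}$ via Young's inequality (using $p>2$), I derive uniform bounds on $u_k$ in $L^\infty(0,T;\LpO) \cap L^\infty(0,T;\Xzsik)$, on $w_k$ in $L^2(0,T;\Xzs)$, and, by $\partial_t u_k = -\As w_k$, on $\partial_t u_k$ in $L^2(0,T;\Xzs')$. From~\eqref{enineq-0} I obtain $\beta(u_k)$ bounded in $L^2(0,T;\LO)$, whence also $\Asigk u_k = w_k - \beta(u_k) + u_k$ bounded in $L^2(0,T;\LO)$.

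\emph{Compactness and identification of weak limits.} Extracting a subsequence, I obtain the weak convergences~\eqref{pm:c:1}--\eqref{pm:c:4} as well as $\beta(u_k) \rightharpoonup \eta$ in $L^2(0,T;\LO)$. The embedding $\LpO \hookrightarrow\hookrightarrow \Xzs'$ is compact (dual to the compact Sobolev embedding $\Xzs \hookrightarrow L^{p'}(\Omega)$, valid since $p>2>2N/(N+2s)$), so Aubin--Lions--Simon yields strong convergence $u_k\to u$ in $L^p(0,T;\Xzs') \cap C([0,T];\Xzs')$, from which the initial condition~\eqref{ic-pm} follows. Lemma~\ref{cor:weak_limit_s_zero}, applied via test products $\varphi(t)\psi(x) \in C^\infty_c(\Omega\times(0,T))$ and exploiting that $\Asigk\psi \to \psi$ in $L^2(\RN)$ by Lemma~\ref{lem:limit_s_zero}, gives $\Asigk u_k \rightharpoonup u$ in $L^2(0,T;\LO)$; passing to the limit in $w_k = \Asigk u_k + \beta(u_k) - u_k$ then forces $\eta = w$. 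Taking limits in the linear equation $\partial_t u_k + \As w_k = 0$ delivers $\partial_t u + \As w = 0$ in $\Xzs'$, which is~\eqref{eq:weak_pm} modulo $w=\beta(u)$.

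\emph{Minty step.} For every $v \in L^p(0,T;\LpO)$ the monotonicity of $\beta$ yields
\[
0 \le \int_0^T\!\!\int_\Omega (\beta(u_k) - \beta(v))(u_k - v) \, \dx\,\dit.
\]
Taking $\liminf$, the cross terms converge by the weak convergences of $u_k$ and of $\beta(u_k)$, and the identification reduces to proving $\limsup_k \int_0^T\!\int_\Omega \beta(u_k) u_k\,\dx\,\dit \le \int_0^T\!\int_\Omega w u\,\dx\,\dit$. Using the equation to rewrite $\beta(u_k) u_k = (w_k + u_k - \Asigk u_k) u_k$ and integrating, the $w_k u_k$ term converges to $\int\!\int wu$ by the weak/strong $\Xzs/\Xzs'$ pairing, while the weak convergence $\mathfrak{A}_{\sigma_k/2} u_k \rightharpoonup u$ in $L^2(0,T;\LO)$ (a half-power variant of Lemma~\ref{cor:weak_limit_s_zero}) combined with $L^2$ lower semicontinuity delivers $\liminf_k \int_0^T \|u_k\|^2_{\Xzsik}\,\dit \ge \int_0^T \|u\|^2_{\LO}\,\dit$. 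The identification $w=\beta(u)$ is thereby reduced to the matching bound $\limsup_k \int_0^T \|u_k\|^2_{\LO}\,\dit \le \int_0^T \|u\|^2_{\LO}\,\dit$.

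\emph{The technical core.} This last estimate---equivalently, the strong $L^2(0,T;\LO)$ convergence of $u_k$---is the main obstacle: it does not follow from Aubin--Lions, since no uniform fractional Sobolev regularity is available on $u_k$ once $\sigma_k\searrow 0$. This is where Proposition~\ref{prop:asy_1eigen} becomes indispensable: the sharp Poincar\'e inequality~\eqref{eq:poincare-opt} reads $\|u_k\|^2_{\LO}\le\lambda_1(\sigma_k)^{-1}\|u_k\|^2_{\Xzsik}$ with $\lambda_1(\sigma_k)\to 1$, so the asymptotic behavior of $\int_0^T \|u_k\|^2_{\Xzsik}\,\dit$ transfers to $\int_0^T \|u_k\|^2_{\LO}\,\dit$ with unit constant in the limit. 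Coupled with a delicate passage to the limit in the integrated energy inequality~\eqref{energy_inequality}---exploiting the pointwise convergence $\mathbb{E}_{\sigma_k}(v) \to \mathcal{E}_0(v):=\tfrac1p\|v\|^p_{L^p}$ granted by the asymptotic identity~\eqref{eq:HsL2}---this produces the required upper bound. Once $w=\beta(u)$ is in hand, the regularity~\eqref{pm:regu} is a consequence of the weak limits already established, completing the proof.
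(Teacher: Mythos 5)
Your scaffolding---uniform estimates, compactness, Lemma~\ref{cor:weak_limit_s_zero} (or its Bochner-space variant, Lemma~\ref{cor:weak_limit_s_zero-rev}, which is what the paper actually invokes) to identify $\Asigk u_k\rightharpoonup u$, and a Minty argument keyed to Proposition~\ref{prop:asy_1eigen}---is the right one and matches the paper's strategy. However, the Minty step contains a genuine gap in the way you organize the crucial estimate.

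What is actually needed is
\begin{equation*}
\limsup_{k\to\infty}\int_0^T \Bigl( \|u_k(t)\|_{\LO}^2 - \|u_k(t)\|_{\Xzsik}^2 \Bigr)\,\dit \;\le\; 0.
\end{equation*}
You split this difference into two \emph{separate} one-sided limit claims: a lower bound $\liminf_k\int_0^T\|u_k\|_{\Xzsik}^2\,\dit \ge \int_0^T\|u\|_{\LO}^2\,\dit$ (plausible via weak lower semicontinuity and a half-power variant of Lemma~\ref{cor:weak_limit_s_zero}, though note that $(-\Delta)^{\sigma_k/2}u_k$ is not supported in $\Omega$, so the correct ambient space is $L^2(\RN)$ rather than $\LO$), and an upper bound $\limsup_k\int_0^T\|u_k\|_{\LO}^2\,\dit \le \int_0^T\|u\|_{\LO}^2\,\dit$, which is equivalent to strong $L^2$ convergence of $u_k$. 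This second claim is never established. The Poincar\'e inequality~\eqref{eq:poincare-opt} only gives $\int\|u_k\|_{\LO}^2 \le \lambda_1(\sigma_k)^{-1}\int\|u_k\|_{\Xzsik}^2$, and you have no matching upper bound on $\limsup_k\int\|u_k\|_{\Xzsik}^2$; indeed, your own first claim goes the wrong way (liminf from below). The appeal to ``a delicate passage to the limit in the integrated energy inequality'' exploiting \eqref{eq:HsL2} does not produce an upper bound of the required form, and the paper does nothing of that kind.

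The fix, which is the paper's actual argument, is to \emph{not} split the two terms. Set $D_k(t):=\|u_k(t)\|_{\LO}^2-\|u_k(t)\|_{\Xzsik}^2$. Then the sharp Poincar\'e inequality~\eqref{eq:poincare-opt} gives, pointwise in $t$,
\begin{equation*}
D_k(t)\;\le\;\Bigl(\tfrac{1}{\lambda_1(\sigma_k)}-1\Bigr)\,\|u_k(t)\|_{\Xzsik}^2,
\end{equation*}
and since $\|u_k\|_{\Xzsik}$ is uniformly bounded in $L^\infty(0,T)$ by the energy estimate~\eqref{eq:energy1} while $\lambda_1(\sigma_k)\to 1$ by Proposition~\ref{prop:asy_1eigen}, the right-hand side tends to zero uniformly in $t$. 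Hence $\limsup_k\int_0^T D_k\,\dit\le 0$ follows in one line, with no need for either of your separate limit claims. This single combination is exactly where the sharp constant $1/\lambda_1(\sigma_k)$ is used; decoupling the two norms throws that away and leaves a hole that cannot be filled by the tools available (strong $L^2(0,T;\LO)$ convergence of $u_k$ is not asserted in the theorem and is precisely what Aubin--Lions fails to give as $\sigma_k\searrow 0$, as you yourself note). The remainder of your argument is sound.
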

Now we address  the non-coercive case $p\in (1,2)$. 
In this situation (which corresponds to the fast diffusion
range when dealing with equations of the type
\eqref{eq:weak_pm}) we can prove similar
results but the analysis requires 
some extra assumption together with a small modification
of the system. First of all, we need a compatibility 
condition between $p$ and the fractional order $s$ in \eqref{eq:ch_weak}. 
To be precise, we ask that
\begin{equation}\label{ass:comp_p_s}
  p > \frac{2N}{N + 2s} =: 2_*. 
\end{equation}
Note that the above condition implies that 
$H^s(\Omega) \hookrightarrow L^{p'}(\Omega)$ with
compact embedding. Hence $\Xzs$ is compactly embedded in
$L^{p'}(\RN)$.
Moreover, we need to modify a bit the energy functional.
Namely, we
replace $\mathbb{E}_\sigma$
with the following
\begin{equation}\label{eq:new_ene}
  \tilde{\mathbb{E}}_{\sigma}(v):= \frac12 \| v \|_{\Xzsi}^2 +
   \frac{1}{p}\|v\|^{p}_{L^p(\Omega)} 
  -\frac{\lambda_1(\sigma)}{2}\|v\|^2_{L^2(\Omega)}
  \quad \mbox{ for } \ v \in \Esig = \Xzsi,
\end{equation}
where $\lambda_1(\sigma)$ is the first eigenvalue of 
\eqref{eq:eigen} with $r = \sigma$. 
Note that, in view of $\lambda_1(\sigma)\xrightarrow{\sigma\searrow 0} 1$
(cf.~Prop.~\ref{prop:asy_1eigen}), one expects that the contribution 
of $\lambda_1(\sigma)$ in the limit is idle.
The reason why we need to replace $\mathbb{E}_\sigma$ with
$\tilde{\mathbb{E}}_{\sigma}$ lies in the fact
that, thanks to Poincar\'e's inequality~\eqref{eq:poincare-opt},
one has
\begin{equation}\label{unif:coerc}
  \tilde{\mathbb{E}}_{\sigma}(v)\ge \frac{1}{p}\|v\|^{p}_{L^p(\mathbb{R}^N)}
   \quad \text{for all } v\in \Xzsi,
\end{equation}
and independently of the value of $\sigma$. 
Of course, modifying the energy through the choice~\eqref{eq:new_ene}
leads correspondingly to a modification of the ``original'' system
\eqref{eq:ch_weak}-\eqref{eq:cp_weak}, which is now
replaced by
\begin{alignat}{4} \label{eq:ch_weak_mod}
 & \partial_t u + \As w = 0 \ &\hbox{ in }& \Xzs', \\
 \label{eq:cp_weak_mod}
 & w = \Asig u + \B(u) - \lambda_1(\sigma) u \ &\hbox{ in
 }& 
 \Xzsi', 
\end{alignat}
still complemented with the initial condition~\eqref{init}.
At fixed $\sigma$, existence and uniqueness of energy
solutions for the modified system \eqref{eq:ch_weak_mod}-\eqref{eq:cp_weak_mod} 
follow from the very same argument given for Theorem~\ref{th:eu}. 
Moreover, the uniform coercivity provided by \eqref{unif:coerc}
permits us to prove a counterpart of 
Theorem~\ref{th:pm} for the fast-diffusion case in the following
theorem:
\begin{theorem}[from Cahn-Hilliard to fast-diffusion]\label{th:fd}
 Let $s \in (0,1)$, $p\in(2_*,2)$, $2_*$ being given
 by~\eqref{ass:comp_p_s}, and let $\{\sigma_k\}$
 be as before. Moreover, let us given
 a sequence of initial data $\{u_{0,k}\}$ and $u_0 \in \Xzs'$
 satisfying 
 \begin{equation}\label{hp:uzk_mod}
   \sup_{k\in \Nz} \tilde{\mathbb{E}}_{\sigma_k}(u_{0,k}) < +\infty, \quad
   u_{0,k} \to u_0 \ \text{ in } \Xzs'. 
 \end{equation} 
 Let $(u_k,w_k)$ denote the corresponding sequence of 
 unique weak solutions of \eqref{eq:fCH}-\eqref{eq:bc},
 with $\sigma = \sigma_k$, $W'(v) = \beta(v) - \lambda_1(\sigma_k) v$
 and initial datum $u_{0,k}$. Then, 
 there exist a (non-relabeled) subsequence
 of $\{k\}$ and a pair of limit functions $(u,w)$ satisfying
 \eqref{pm:c:1}-\eqref{pm:regu}. 
 Moreover, $u$ is a (weak) solution to the fractional fast-diffusion
 equation
 \begin{equation}\label{eq:weak_fd}
    \partial_t u  + \As \beta(u) =  0 \ \hbox{ in } \Xzs',
     \ \text{ a.e.~in }(0,T),
 \end{equation}
 with the initial condition \eqref{ic-pm}.
\end{theorem}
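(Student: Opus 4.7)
The proof follows the same structural template as that of Theorem \ref{th:pm}, but two essential features of the subcritical exponent $p \in (2_*, 2)$ come into play: the uniform coercivity provided by the $L^p$-term of the modified energy $\Esk$, as expressed by \eqref{unif:coerc}, and the compact embedding $\Xzs \hookrightarrow\hookrightarrow L^{p'}(\RN)$ (equivalently $\LpO \hookrightarrow\hookrightarrow \Xzs'$) entailed by assumption \eqref{ass:comp_p_s}, which now replaces the $L^2$-coercivity available in the coercive case.

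First, I would derive the $k$-independent a priori bounds. The energy inequality for the modified system \eqref{eq:ch_weak_mod}-\eqref{eq:cp_weak_mod} (analogous to Theorem \ref{th:eu}), together with \eqref{hp:uzk_mod} and \eqref{unif:coerc}, yields uniform estimates of $u_k$ in $L^\infty(0,T;\LpO)$, of $w_k$ in $L^2(0,T;\Xzs)$, and (since $|\beta(v)|^{p'}=|v|^p$) of $\beta(u_k)$ in $L^\infty(0,T;L^{p'}_0(\RN))$. Via the equation $\partial_t u_k + \As w_k = 0$ this also bounds $\partial_t u_k$ in $L^2(0,T;\Xzs')$. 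Standard weak-star extraction produces a subsequence and a pair $(u,w)$ satisfying \eqref{pm:c:1}, \eqref{pm:c:2}, \eqref{pm:c:4}, with additionally $\beta(u_k) \rightharpoonup \bar w$ weakly-star in $L^\infty(0,T;L^{p'}_0(\RN))$ for some $\bar w$. Simon's compactness theorem, applied with the compact embedding $\LpO \hookrightarrow\hookrightarrow \Xzs'$, upgrades this to strong convergence $u_k \to u$ in $C([0,T];\Xzs')$, which combined with $u_{0,k} \to u_0$ in $\Xzs'$ yields the initial condition \eqref{ic-pm}.

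Second, I pass to the limit in the equations. Equation \eqref{eq:ch_weak_mod} passes by straightforward weak convergence. For \eqref{eq:cp_weak_mod} I test against $\phi \in \DO$ and a time-cutoff $\eta \in C^\infty_c(0,T)$: by the self-adjointness identity \eqref{eq:distr_lapl3}, $\langle \Asigk u_k, \phi\rangle = (u_k, \Asigk\phi)$, while Lemma \ref{lem:limit_s_zero} guarantees $\Asigk\phi \to \phi$ in every $H^\alpha(\RN)$ (hence in $L^\infty$), and Proposition \ref{prop:asy_1eigen} yields $\lambda_1(\sigma_k)\to 1$. Consequently the two linear contributions $\Asigk u_k$ and $\lambda_1(\sigma_k)u_k$ cancel exactly in the limit, leaving $w=\bar w$ as an identity in $L^1((0,T)\times\Omega)$.

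The heart of the matter, and the main obstacle, is the identification $\bar w = \beta(u)$. Since the $\Xzsik$-regularity of $\{u_k\}$ degenerates as $\sigma_k \searrow 0$, direct strong convergence in $L^p$ is unavailable, and I resort to Minty's monotonicity trick. Testing \eqref{eq:cp_weak_mod} against $u_k(t) \in \Xzsik\cap\LpO$ and exploiting the Poincar\'e inequality $\|u_k\|_{\Xzsik}^2 \ge \lambda_1(\sigma_k)\|u_k\|_{L^2(\Omega)}^2$ yields
$$
\int_0^T \|u_k\|_{L^p(\Omega)}^p \, \dt \le \int_0^T \langle w_k, u_k\rangle_{\Xzs,\Xzs'} \, \dt,
$$
and the right-hand side converges to $\int_0^T \langle w,u\rangle \, \dt$ by the weak/strong coupling ($w_k \rightharpoonup w$ in $L^2(0,T;\Xzs)$ versus $u_k \to u$ strongly in $C([0,T];\Xzs')$). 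Combined with the elementary monotonicity inequality $\int_0^T\int_\Omega(\beta(u_k)-\beta(v))(u_k-v)\,\dx\,\dt \ge 0$ for every $v \in L^p(0,T;\LpO)$, and with the weak limits already established for $\beta(u_k)$ and $u_k$, this provides
$$
\int_0^T\int_\Omega (w - \beta(v))(u - v) \, \dx\,\dt \ge 0 \quad \text{for all } v \in L^p(0,T;\LpO).
$$
The standard Minty trick (choosing $v = u \pm \lambda \psi$ and passing to the limit as $\lambda\searrow 0$, using the continuity of $\beta\colon L^p\to L^{p'}$) then yields $w = \beta(u)$, which simultaneously transforms \eqref{eq:ch_weak_mod} into the fast-diffusion equation \eqref{eq:weak_fd} and delivers the regularity assertion \eqref{pm:regu}.
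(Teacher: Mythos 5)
Your proposal is correct and follows essentially the same strategy as the paper: uniform a priori estimates via the modified energy $\tilde{\mathbb{E}}_{\sigma_k}$ and the uniform coercivity \eqref{unif:coerc}, compactness through the embedding $\LpO \hookrightarrow \Xzs'$ guaranteed by \eqref{ass:comp_p_s}, identification of $\lim \Asigk u_k$ as $u$ (the paper does this via Lemma~\ref{cor:weak_limit_s_zero-rev}, you do it inline with \eqref{eq:distr_lapl3} and Lemma~\ref{lem:limit_s_zero}, which amounts to the same computation), the Poincar\'e bound $\|u_k\|_{\Xzsik}^2 \geq \lambda_1(\sigma_k)\|u_k\|_{L^2}^2$ to dominate $\int \beta(u_k)u_k$ by $\int w_k u_k$, and Minty's trick to conclude $\bar\beta = \beta(u)$. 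The only cosmetic difference is that you spell out Minty's argument via the variational inequality $\int(\bar w - \beta(v))(u-v) \geq 0$ rather than simply citing maximal monotonicity of $u \mapsto \beta(u)$ between $L^p$ and $L^{p'}$, which is equivalent.
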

%
%
Finally, we investigate the behavior of weak solutions to
\eqref{eq:fCH}-\eqref{eq:bc} 
when the order $s$ of the fractional Laplacian in 
\eqref{eq:ch_weak} is let tend to $0$. 
In this case, we can prove the following
\begin{theorem}[from Cahn-Hilliard to Allen-Cahn]\label{th:ac}
 Let $p\in(1,\infty)\setminus\{2\}$, $\sigma \in (0,1)$ and let
 $\{s_k\}\subset (0,1)$ be such that 
 $s_k\searrow 0$ as $k\nearrow +\infty$.
 Let also $u_{0,k} \in \Xzsi$ and $u_0 \in \LO$ satisfy
\begin{equation}\label{hypo-ac}
\sup_k \left( \Esi(u_{0,k}) + \|u_{0,k}\|_{\Xzsk'}^2 \right) < \infty,
 \quad u_{0,k} \to u_0 \quad \mbox{ strongly in } \LO.
\end{equation}
Let us denote as $(u_k,w_k)$ the 
 corresponding sequence of unique weak solutions 
 to \eqref{eq:fCH}-\eqref{eq:bc} with $s = s_k$ and initial datum
 $u_{0,k}$. 
 Then, there exist a (non-relabeled) subsequence
 of $\{k\}$ and a pair of limit functions $(u,w)$ such that
 \begin{alignat*}{4}
       u_k &\to u \quad 
       &&\mbox{ weakly star in } L^\infty(0,T;\Esig),\\
       & &&\mbox{ strongly in } C([0,T]; \LO),\\
       & &&\mbox{ weakly in } W^{1,2}(0,T; \Esig'),\\
       w_k &\to w \quad
       &&\mbox{ weakly in } L^2(0,T;\LO),\\
       \mathfrak{A}_{s_k} w_k &\to w \quad
       &&\mbox{ weakly in } L^2(0,T;\Esig').
      \end{alignat*}
 %
 Moreover,
 $$
 u \in C_w([0,T];\Esig) \cap W^{1,2}(0,T;\Esig'),
 \quad w \in L^2(0,T;\LO),
 $$
  $w = \Asig u + \B(u) - u$, 
 and $u$ is a (weak) solution to the fractional 
 Allen-Cahn equation 
 \begin{equation}\label{eq:weak_AC}
    \partial_t u  + \Asig u + \B(u) - u =  0 \ \hbox{ in } 
     \Esig',\ \text{ a.e.~in }(0,T),
 \end{equation}
 with the initial condition $u|_{t=0}=u_0$, i.e.,
$$
u(t) \to u_0 \quad \mbox{ strongly in } \LO \ \mbox{ and weakly in } \Esig  \
 \mbox{ as } \ t \searrow 0.
$$ 
\end{theorem}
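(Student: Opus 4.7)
The plan is to derive uniform-in-$k$ a priori estimates from the energy identity of Theorem~\ref{th:eu}, apply Aubin--Lions compactness in spaces independent of $k$, and pass to the limit in both \eqref{eq:ch_weak} and \eqref{eq:cp_weak}. The delicate point throughout is that the natural Hilbert space $\Xzsk$ depends on $k$: its Poincaré constant could a priori degenerate as $s_k\searrow 0$, and one must identify the weak limit of $\Ask w_k$. These two issues are settled respectively by Proposition~\ref{prop:asy_1eigen} ($\lambda_1(s_k)\to 1$) and Lemma~\ref{lem:limit_s_zero} ($\Ask\varphi\to\varphi$ for smooth test functions).

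Since $\sigma\ge s_k$ for $k$ large, Theorem~\ref{th:eu} provides the energy equality
\begin{equation*}
 \Esi(u_k(t)) + \int_0^t \|w_k(\tau)\|_{\Xzsk}^2 \,\d\tau = \Esi(u_{0,k}),
\end{equation*}
which together with \eqref{hypo-ac} gives uniform bounds for $\{u_k\}$ in $L^\infty(0,T;\Esig)$ and for $\{w_k\}$ in $L^2(0,T;\Xzsk)$. Combining \eqref{eq:poincare-opt} with $\lambda_1(s_k)\to 1$ produces a uniform $L^2(0,T;\LO)$-bound on $w_k$. For the time derivative, fix $\varphi\in\Esig\subset H^\sigma(\RN)$ and split the Fourier integral $\int|\xi|^{2s_k}|\hat\varphi|^2\,\d\xi$ into the regions $\{|\xi|\le 1\}$ (bounded by $\|\varphi\|_{L^2(\RN)}^2$) and $\{|\xi|>1\}$ (bounded by $[\varphi]_{H^\sigma(\RN)}^2$). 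This yields $\|\varphi\|_{\Xzsk}\le C\|\varphi\|_{\Xzsi}$ uniformly in $k$. Since $\Ask\colon\Xzsk\to\Xzsk'$ is the Riesz isomorphism of $\Xzsk$, it follows that $\{\partial_t u_k\}=\{-\Ask w_k\}$ is uniformly bounded in $L^2(0,T;\Esig')$.

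An Aubin--Lions argument based on the chain $\Esig\hookrightarrow\hookrightarrow\LO\hookrightarrow\Esig'$ extracts a non-relabeled subsequence with $u_k\to u$ strongly in $C([0,T];\LO)$, weakly star in $L^\infty(0,T;\Esig)$, and $\partial_t u_k\rightharpoonup \partial_t u$ in $L^2(0,T;\Esig')$; likewise $w_k\rightharpoonup w$ in $L^2(0,T;\LO)$ and $\Ask w_k\rightharpoonup \widetilde w$ in $L^2(0,T;\Esig')$ for some $\widetilde w$. To identify $\widetilde w=w$, I test against $\varphi\psi$ with $\varphi\in\DO$ and $\psi\in C^\infty_c(0,T)$, using the symmetric form \eqref{eq:distr_lapl3}:
\begin{equation*}
 \int_0^T\langle \Ask w_k,\varphi\rangle\psi\,\d\tau = \int_0^T\!\!\int_{\RN} w_k\,\Ask\varphi\,\d x\,\psi\,\d\tau \xrightarrow{k\to\infty}\int_0^T\!\!\int_\Omega w\,\varphi\,\d x\,\psi\,\d\tau,
\end{equation*}
where the passage to the limit combines Lemma~\ref{lem:limit_s_zero} with the weak $L^2$-convergence of $w_k$. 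A density argument in $\Esig$ then gives $\widetilde w=w$, whence $\partial_t u+w=0$ in $L^2(0,T;\Esig')$.

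For the constitutive equation, the linearity of $\Asig$ on the $k$-independent space $\Xzsi$ turns $u_k\rightharpoonup u$ in $L^2(0,T;\Xzsi)$ into $\Asig u_k\rightharpoonup \Asig u$ in $L^2(0,T;\Xzsi')$. The strong convergence in $C([0,T];\LO)$, combined with pointwise convergence up to subsequence and the uniform $L^\infty(0,T;L^p(\Omega))$-bound, yields $\beta(u_k)\to\beta(u)$: by Nemytskii continuity $L^p\to L^{p'}$ when $p\in(1,2)$ (via $L^2\hookrightarrow L^p$ on $\Omega$), or by Vitali's theorem and $L^2$--$L^p$ interpolation when $p>2$. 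Passing to the limit in \eqref{eq:cp_weak} gives $w=\Asig u+\B(u)-u$ in $\Esig'$, and substitution produces \eqref{eq:weak_AC}. The initial condition follows from $u_k(0)=u_{0,k}\to u_0$ in $\LO$ together with strong convergence in $C([0,T];\LO)$; the weak regularity $u\in C_w([0,T];\Esig)$ is a consequence of $u\in L^\infty(0,T;\Esig)\cap C([0,T];\LO)$. The principal obstacle is the $k$-dependence of the ambient spaces, which is precisely overcome by Proposition~\ref{prop:asy_1eigen} (no degeneration of Poincaré) and by Lemma~\ref{lem:limit_s_zero} (strong asymptotic identification of $\Ask$ with the identity on smooth functions).
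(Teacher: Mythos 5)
Your overall strategy matches the paper's: uniform bounds, Aubin--Lions compactness through the $k$-independent chain $\Esig\hookrightarrow\hookrightarrow\LO\hookrightarrow\Esig'$, identification of the limit of $\Ask w_k$ with $w$ via the self-adjointness of $\Ask$ together with Lemma~\ref{lem:limit_s_zero}, and passage to the limit in the constitutive equation. Two of your variations are harmless: you prove the identification $\Ask w_k\rightharpoonup w$ ``by hand'' against products $\varphi\psi$ rather than citing Lemma~\ref{cor:weak_limit_s_zero-rev}, and you identify $\beta(u_k)\to\beta(u)$ via pointwise convergence and Nemytskii/Vitali rather than Minty's trick; both are available because you already have the strong $C([0,T];\LO)$ convergence.

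There is, however, a genuine gap in the first step. You assert that the energy equality
\[
\Esi(u_k(t)) + \int_0^t \|w_k(\tau)\|_{\Xzsk}^2\,\d\tau = \Esi(u_{0,k})
\]
together with \eqref{hypo-ac} ``gives uniform bounds for $\{u_k\}$ in $L^\infty(0,T;\Esig)$.'' This only works when $\Esi$ is coercive, i.e.\ for $p>2$. For $p\in(1,2)$ the potential $W$ is unbounded from below, $\Esi$ need not be bounded below (nor coercive) on $\Esig$, so an upper bound on $\Esi(u_k(t))$ does \emph{not} control $\|u_k(t)\|_{\Xzsi}$ or $\|u_k(t)\|_{L^p}$, nor does it allow you to bound $\int_0^t\|w_k\|_{\Xzsk}^2$ without a lower bound on $\Esi(u_k(t))$. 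The paper handles this separately in \S\ref{Ss:UE} (``In case $1<p<2$''): one applies $\As^{-1}$ to \eqref{eq:ch_weak}, tests against $u_k$, invokes Ehrling's lemma in the form \eqref{ehrling-2} (which is uniform in $s$ because $\sigma$ is fixed), runs Gronwall to bound $\|u_k(t)\|_{X'}$ and $\int_0^T\|u_k\|_{\Xzsi}^2$, and only then combines with the energy inequality to recover \eqref{unif_bound-2}--\eqref{unif_bound-3}. This is precisely why \eqref{hypo-ac} imposes the extra condition $\sup_k\|u_{0,k}\|_{\Xzsk'}^2<\infty$: that quantity enters through the Gronwall step, not through the energy balance. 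Your argument needs this supplementary estimate for $p\in(1,2)$; as written it covers only the coercive case.
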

\noindent
Theorems~\ref{th:pm}, \ref{th:fd} and \ref{th:ac}
will be proved in Section~\ref{sec:conve} below.
Actually, while the proof of Theorem~\ref{th:ac} is almost
straightforward, taking the limit $\sigma\searrow 0$ 
will require a careful analysis of the behavior of the 
eigenvalues of~$\Asigk$ as $\sigma_k\searrow 0$.
The proof of Theorem \ref{th:fd} is simplified 
by the fact that we will not start from the original system
\eqref{eq:ch_weak}-\eqref{eq:cp_weak},
but rather from its modified version 
\eqref{eq:ch_weak_mod}-\eqref{eq:cp_weak_mod},
whose energy is bounded from below, uniformly w.r.t.~$\sigma$,
thanks to \eqref{unif:coerc}.
As will be clarified from the proof,
%
this fact makes the analysis simpler. Indeed, 
we will not need to use the results 
in Prop.~\ref{prop:asy_1eigen} regarding the 
asymptotics of the first eigenvalue of the fractional
Laplacian, which, instead, are essential for proving 
Theorem~\ref{th:pm}. Actually, we expect that the 
convergence to the fast diffusion equation 
could hold, still for $p\in (2_*,2)$, also for the non-modified
fractional Cahn-Hilliard \eqref{eq:ch_weak}-\eqref{eq:cp_weak}. However,
for the moment, this remains as an open question.

%
%

\section{Existence and uniqueness}
\label{sec:conti}

This section is devoted to giving a proof of Theorem~\ref{th:eu}, i.e.,
existence and uniqueness of weak solutions along with some regularity
properties and energy inequalities. In the existence part,
the basic strategy of a proof is more or less standard, and it
consists of time-discretization, a priori estimates, compactness
arguments to obtain convergence, and Minty's trick 
for the identification of the limit. On the other hand,
we will face some difficulty in deriving the energy inequality
\eqref{energy_inequality}, particularly, the differentiability of the
energy $t \mapsto \Esi(u(t))$, and for proving the right-continuity of
$u(t)$ in the strong topology of $\Esig$. These difficulties 
are due to the simultaneous presence of two fractional Laplacians
of (possibly) different order. However, the energy inequality 
seems to be one of the minimum requirements for the
analysis of singular limits in latter sections and 
also for the investigation of the long-time behavior of global
(in time) solutions.  Moreover, the right-continuity of solutions
plays a crucial role to prove the convergence of
each global solution as $t$ goes
to infinity. More details on the long-time behavior of 
solutions will be discussed in a forthcoming paper. 
Here, it is also worth emphasizing that, in contrast 
with usual studies on the Cahn-Hilliard equation, our
argument does not rely on the coercivity of the 
double-well potential $W$. In other words, we can treat the
cases $p > 2$ and $p \in (1,2)$, simultaneously. This fact 
potentially means that our proof could also be extended to
fractional Cahn-Hilliard equations with more 
general classes of potentials.

\medskip
%
\subsection{Uniqueness of solution}%
We first note that
$\As : \Xsz \to \Xsz'$
is invertible, i.e., there exists the inverse mapping 
$\As^{-1} : \Xsz' \to \Xsz$ of $\As$. Indeed, the functional
on $\Xsz$ defined by
\begin{equation}\label{dual_Xzs}
  v \mapsto \dfrac 1 2 \|v\|_{\Xsz}^2
\end{equation}
is smooth, convex and coercive in $\Xsz$. Hence its
Fr\'echet derivative $\As$ is invertible. 
Furthermore, one observes by the definition of norms in dual spaces that
$$
 \langle \As u, u \rangle_{\Xzs} = \|u\|_{\Xzs}^2 = \|\As u\|_{\Xzs'}^2
  \quad \mbox{ for all } \ u \in \Xzs.
$$
Hence $\As$ is a duality mapping between $\Xzs$ and $\Xzs'$.

Now, let $(u_1,w_1)$ and $(u_2,w_2)$ be weak solutions of the fractional
 Cahn-Hilliard equation for the same initial data $u_0$.
Let $\hat u := u_1 - u_2$ and $\hat w := w_1 - w_2$. Then we have
\begin{equation}\label{uniq-eqn}
\partial_t \hat u + \As \hat w = 0 \ \mbox{ in } \Xsz', \quad
 \hat w = \Asig \hat u + \B(u_1) - \B(u_2) - \hat u \
\mbox{ in } \Esig'.
\end{equation}
Applying $\As^{-1} : \Xzs' \to \Xzs$ to both sides of the first
 equation of \eqref{uniq-eqn}, we have
$$
\dfrac{\d}{\d t} \As^{-1} \hat u + \hat w = 0 \ \mbox{ in
 } \Xsz.
$$
Test it by $\hat u \in \Xsz \hookrightarrow \Xsz'$ (see \eqref{H-tr}) to get
\begin{equation}\label{xJsu}
\dfrac 1 2 \dfrac{\d}{\d t} \|\hat u\|_{\Xsz'}^2
+ \langle \hat u, \hat w \rangle_{\Xsz} = 0.
\end{equation}
Here we used the fact that
\begin{align}\label{dual-test}
\left\langle \hat u, \dfrac{\d}{\d t} \As^{-1} \hat u
 \right\rangle_{\Xsz}
&=
\left\langle \As \circ \As^{-1} \hat u, \dfrac{\d}{\d t} \As^{-1}
 \hat u
 \right\rangle_{\Xsz}\\
&= \dfrac 1 2 \dfrac{\d}{\d t}
\left\|
\As^{-1} \hat u
\right\|_{\Xsz}^2
= \dfrac 1 2 \dfrac{\d}{\d t}
\left\|
\hat u
\right\|_{\Xsz'}^2,\nonumber
\end{align}
since $\As^{-1}$ is a duality mapping between $\Xsz'$ and $\Xsz$.
Test now the second equation of \eqref{uniq-eqn} by $\hat u$. Then, by 
 the monotonicity of $\beta$, it follows that
\begin{align}\label{xw}
\langle \hat w, \hat u \rangle_{\Esig}
&= \|\hat u\|_{\Xsigz}^2 + \int_\Omega \left( \beta(u_1) - \beta(u_2)
 \right) \hat u ~ \d x - \|\hat u\|_{L^2(\Omega)}^2
\\
&\geq
\|\hat u\|_{\Xsigz}^2 
 - \|\hat u\|_{L^2(\Omega)}^2.\nonumber
\end{align}
We also note that, for any functions $u \in \Esig
$ ($\hookrightarrow \Xzs'$ by \eqref{H-tr}) and $w \in \Xzs$
($\hookrightarrow \Esig'$),
\begin{equation}\label{convert_pair}
 \langle u, w \rangle_{\Xzs} 
= \int_{\RN} u(x) w(x) \; \d x
= \langle w, u \rangle_{\Esig}.
\end{equation}

Combining \eqref{xJsu} and \eqref{xw} with
\eqref{convert_pair}, we deduce that
$$
\dfrac 1 2 \dfrac{\d}{\d t} \|\hat u\|_{\Xsz'}^2
+ \|\hat u\|_{\Xsigz}^2 
\leq \|\hat u\|_{L^2(\Omega)}^2
\leq \dfrac 1 2 \|\hat u\|_{\Xsigz}^2 + C \|\hat u\|_{\Xsz'}^2
$$
for some constant $C \geq 0$. Here we used Ehrling's lemma
(see~\cite[Lemma 8]{Simon}), 
i.e., for each $\vep > 0$, one can take a constant $C_\vep \geq 0$
such that
\begin{equation}\label{ehrling-1}
\|v\|_{L^2(\Omega)}
= \|v\|_{\LO}
\leq \vep \|v\|_{\Xsigz} + C_\vep \|v\|_{\Xsz'}
\quad \mbox{ for all } \ v \in \Xsigz.
\end{equation}
Thus we get
$$
\dfrac 1 2 \dfrac{\d}{\d t} \|\hat u\|_{\Xsz'}^2
+ \dfrac 1 2 \|\hat u\|_{\Xsigz}^2 
\leq C \|\hat u\|_{\Xsz'}^2,
$$
which along with the fact that $\hat u(0) = 0$ in $\Xzs'$
implies $\hat u(t) = 0$ in $\Xzs'$ for all $t \geq
0$. Consequently, since $u_1(t)$ and $u_2(t)$ belong to $\LO$, one can
assure that $u_1(t) = u_2(t)$
a.e.~in $\RN$ for all $t \in [0,T]$, that is, $u_1 = u_2$ as desired.

\medskip
%
\subsection{Time-discretization}%
Let $N \in \mathbb N$ and let $\tau = T/N$ be a time step. 
In order to construct weak solutions, we first carry out the
following discretization of \eqref{eq:ch_weak}-\eqref{eq:cp_weak}:

\begin{alignat}{4}
& \dfrac{u_n - u_{n-1}}{\tau} + \As w_n = 0 \ &\mbox{ in }&
 \Xsz',\label{td1}\\
& w_n = \Asig u_n + \B(u_n) - u_{n-1} \ &\mbox{ in }&
 \Esig'\label{td2}
\end{alignat}
for $n = 1,2,\ldots,N$.
In order to show existence of $(u_n,w_n)$ satisfying~\eqref{td1}-\eqref{td2}, 
let us introduce the functional $F_n : \Esig \to \mathbb R$
given by
$$
F_n(u) := \dfrac \tau 2 \left\| \dfrac{u-u_{n-1}}\tau \right\|_{\Xzs'}^2
+ \dfrac{C(\sigma)}4 \iint_{\mathbb R^{2N}}
\dfrac{|u(x)-u(y)|^2}{|x-y|^{N+2\sigma}} \; \d x \, \d y
+ \int_\Omega \hat \beta(u) \; \d x
- \int_\Omega u_{n-1} u \; \d x
$$
for $u \in \Esig$, where $\hbeta$ denotes the primitive function of
$\beta$ such that $\hat\beta(0)=0$. Then $F_n$ is strictly convex,
coercive and of class $C^1$ in $\Esig$. Therefore one can take a unique
minimizer $u_n \in \Esig$ of $F_n$. Let us recall that $\As^{-1} : \Xzs'
\to \Xzs$ is the inverse duality mapping, whence $\As^{-1}$ coincides
with the Fr\'echet derivative of the functional
$$
K(v) := \dfrac 1 2 \|v\|_{\Xzs'}^2 \quad \mbox{ for } \ v \in \Xzs'.
$$
Correspondingly, $K(\cdot)$ is of class $C^1$ in $\Xzs'$ ; moreover,
\begin{equation}\label{dK}
K'(v) = \As^{-1} v \quad \mbox{ for } \ v \in \Xzs'. 
\end{equation}
Indeed, for each $u \in \Esig$, one has
\begin{align*}
 K(u) - K(v)
 &\leq \left\langle u - v, \As^{-1} u \right\rangle_{\Xzs}
 \stackrel{\eqref{convert_pair}}{=} \left\langle \As^{-1} u, u - v
 \right\rangle_{\Esig}
 \quad \mbox{ for all } \ v \in \Esig,
\end{align*}
which implies \eqref{dK}. Thus we have
\begin{equation}\label{td-unif}
\As^{-1} \left( \dfrac{u_n-u_{n-1}}\tau\right) + \Asig u_n +
 \B(u_n) - u_{n-1} = 0 \ \mbox{ in } \Esig'.
\end{equation}
Setting
$$
w_n := 
- \As^{-1} \left(\dfrac{u_n-u_{n-1}}\tau\right) 
\in \Xzs
$$
and applying $\As$ to both sides, we obtain \eqref{td1} along with \eqref{td2}.

%
\subsection{A priori estimates}%
Test \eqref{td-unif} by $u_n \in \Esig$ $(\hookrightarrow \Xsz')$. Then
as in \eqref{dual-test} it follows that
\begin{align*}
\|u_n\|_{\Xsz'}^2-\|u_{n-1}\|_{\Xsz'}^2 
+ 2\tau \left( \|u_n\|_{\Xsigz}^2 + \|u_n\|_{L^p(\Omega)}^p \right)
&\leq 2 \tau (u_n, u_{n-1})_{L^2(\Omega)}\\
&\leq \tau \left( \|u_n\|_{L^2(\Omega)}^2 + \|u_{n-1}\|_{L^2(\Omega)}^2
 \right).
\end{align*}
Summing up, we deduce that
$$
\|u_n\|_{\Xsz'}^2
+ 2 \sum_{j = 1}^n \tau \left( \|u_j\|_{\Xsigz}^2 +
\|u_j\|_{L^p(\Omega)}^p \right)
\leq \|u_0\|_{\Xsz'}^2
+ 2 \sum_{j = 0}^n \tau \|u_j\|_{L^2(\Omega)}^2.
$$
Recall \eqref{ehrling-1} again to obtain
$$
\|u_n\|_{\Xsz'}^2
+ \sum_{j = 1}^n \tau \left( \|u_j\|_{\Xsigz}^2 +
2 \|u_j\|_{L^p(\Omega)}^p \right)
\leq C \|u_0\|_{L^2(\Omega)}^2 + C \sum_{j = 1}^n \tau \|u_j\|_{\Xsz'}^2.
$$
Hence, due to the discrete Gronwall inequality, we obtain 
\begin{equation}\label{td:e-0}
\max_n \|u_n\|_{\Xsz'}^2 + \sum_{n = 0}^N \tau \left( \|u_n\|_{\Xsigz}^2 +
\|u_n\|_{L^p(\Omega)}^p \right) \leq C.
\end{equation}

Test \eqref{td1} by $w_n \in \Xsz$ and \eqref{td2} by
$(u_n - u_{n-1})/\tau \in \Esig$ and employ \eqref{convert_pair} to get
\begin{equation}\label{ei-dc}
\|w_n\|_{\Xsz}^2
+ \dfrac{\|u_n\|_{\Xsigz}^2 - \|u_{n-1}\|_{\Xsigz}^2}{2\tau} 
+ \dfrac{\|u_n\|_{L^p(\Omega)}^p - \|u_{n-1}\|_{L^p(\Omega)}^p}{p\tau} 
\leq \dfrac{\|u_n\|_{L^2(\Omega)}^2 - \|u_{n-1}\|_{L^2(\Omega)}^2}{2\tau}.
\end{equation}
By summing up, we have
\begin{align}\label{ei-dc-2}
 \sum_{j=1}^n \tau \|w_j\|_{\Xzs}^2 + \dfrac 1 2 \|u_n\|_{\Xzsi}^2
      + \dfrac 1 p \|u_n\|_{L^p(\Omega)}^p
\leq \dfrac 1 2 \|u_0\|_{\Xzsi}^2 + \dfrac 1 p \|u_0\|_{L^p(\Omega)}^p
+ \dfrac 1 2 \|u_n\|_{L^2(\Omega)}^2.
\end{align}
Using \eqref{ehrling-1} and \eqref{td:e-0}, assumption~\eqref{eq:cond_iniz},
and the the boundedness of $\As : \Xsz \to \Xsz'$ along with \eqref{td1}, we
then deduce
\begin{equation}\label{est-dc}
\sum_{n = 1}^N \tau \|w_n\|_{\Xsz}^2 
+
\max_{n} \left(\|u_n\|_{\Xsigz}^2+\|u_n\|_{L^p(\Omega)}^p\right) \leq C, \quad
\sum_{n=1}^N \tau \left\| 
\dfrac{u_n - u_{n-1}} \tau \right\|_{\Xsz'}^2 \leq C.
\end{equation}
Let $\bar u_\tau$ and $\bar w_\tau$ be the piecewise constant
interpolants of $\{u_n\}$ and $\{w_n\}$, respectively, and let $u_\tau$
be the piecewise linear interpolant of $\{u_n\}$. More precisely, we
define $\bar u_\tau$, $u_\tau$ by
$$
  {\bar u_\tau (t) \equiv u_n },\quad
u_\tau (t) = \dfrac{t-t_{n-1}}{\tau} u_n + \dfrac{t_n-t}{\tau} u_{n-1} 
\quad \mbox{ for } \ t \in [t_{n-1},t_n),
$$
and $\bar w_\tau$ analogously.
Then they satisfy
\begin{alignat}{4}
 &\partial_t u_\tau + \As \bar w_\tau = 0 \ &\mbox{ in }&
 \Xsz',\label{td1i}\\
 &\bar w_\tau = \Asig \bar u_\tau + \B(\bar u_\tau) 
 - \bar u_\tau(\cdot - \tau) \ &\mbox{ in }&
 \Esig'.\label{td2i}
\end{alignat}
Moreover, the previous estimates \eqref{est-dc} 
can be rewritten in the form
\begin{equation}
\int^T_0 \|\bar w_\tau(t)\|_{\Xsz}^2 \d t 
+ \sup_{t \in [0,T]} \left( \|\bar u_\tau(t)\|_{\Xsigz}^2 
+ \|\bar u_\tau(t)\|_{L^p(\RN)}^p \right) \leq C,
\quad
\int^T_0 \|\partial_t u_\tau(t)\|_{\Xsz'}^2 \d t \leq C, \label{est}
\end{equation}
whence we easily get also 
\begin{equation}
\sup_{t \in [0,T]} \left( \|u_\tau(t)\|_{\Xsigz}^2 
+ \|u_\tau(t)\|_{L^p(\RN)}^p \right) \leq C.\label{est1}
\end{equation}
Note that
$$
 \langle \B(\bar u_\tau(t)), \phi \rangle_{L^p(\RN)}
 = \int_\Omega \beta(\bar u_\tau(x,t)) \phi(x) \; \d x
 \leq \|\bar u_\tau(t)\|_{L^p(\Omega)}^{p-1} \|\phi\|_{L^p(\mathbb R^N)}
\quad \mbox{ for all } \ \phi \in L^p(\mathbb R^N),
$$
which implies
\begin{align}
\sup_{t\in[0,T]}\|\B(\bar u_\tau(t))\|_{L^{p'}(\mathbb R^N)}
\leq \sup_{t\in[0,T]}\|\bar u_\tau(t)\|_{L^p(\Omega)}^{p-1} \leq C.
\label{e:Bu}
\end{align}

%
\subsection{\bf Convergence as $\tau \to 0$}%
From the estimates established so far, one can take a (non-relabeled)
subsequence of $\tau \to 0$ (equivalently, $N \to \infty$) such that
\begin{alignat}{4}
 \bar w_\tau &\to w \quad &&\mbox{ weakly in } L^2(0,T;\Xsz), \label{gs11}\\
 \As \bar w_\tau &\to \As w \quad &&\mbox{ weakly in } L^2(0,T;\Xzs'), \label{gs12}\\
 \bar u_\tau &\to u \quad &&\mbox{ weakly star in } L^\infty(0,T;\Esig), \label{gs13}\\
 \Asig \bar u_\tau &\to \Asig u \quad &&\mbox{ weakly in } L^2(0,T;\Xsigz'),\label{gs14}\\
 \B(\bar u_\tau(\cdot)) &\to \chi \quad &&\mbox{ weakly star in }
 L^\infty(0,T;L^{p'}(\mathbb R^N)), \label{gs15}\\
 u_\tau &\to u \quad &&\mbox{ weakly star in }
 L^\infty(0,T;\Esig),\label{gs16}\\
 \partial_t u_\tau &\to \partial_t u \quad &&\mbox{ weakly in } L^2(0,T;\Xsz'). \label{gs17}
\end{alignat}
Combining these facts with the compact embeddings $\Esig \hookrightarrow
\LO \hookrightarrow \Xsz'$, and using
the Aubin-Lions-Simon compactness lemma
(see~\cite[Theorem 5]{Simon}), one can verify that
\begin{equation}
u_\tau \to u \quad \mbox{ strongly in } C([0,T];\LO).\label{c:u2-C}
\end{equation}
Then $u$ belongs to $C([0,T];\LO) \cap C_w([0,T];\Esig)$ as well.
Moreover, we observe by \eqref{est-dc} that
\begin{align}
\|\bar u_\tau (t) - u_\tau (t)\|_{\Xsz'}
&= \left\| u_n - \left(\dfrac{t-t_{n-1}}{\tau} u_n + \dfrac{t_n-t}{\tau}
 u_{n-1} \right) \right\|_{\Xsz'}
\label{equi-Ju}\\
&= \dfrac{t_n-t}{\tau} \| u_n- u_{n-1}\|_{\Xsz'}
\stackrel{\eqref{est-dc}}{\leq} C \sqrt \tau
\quad \mbox{ for all } \ t \in [t_{n-1}, t_n),
\nonumber
\end{align}
which along with \eqref{c:u2-C} yields
$$
\sup_{t \in [0,T]} \| \bar u_\tau(t) - u(t)\|_{\Xsz'}
\leq \sup_{t \in [0,T]} \| \bar u_\tau(t) - u_\tau(t)\|_{\Xsz'}
+ \sup_{t \in [0,T]} \| u_\tau(t) - u(t)\|_{\Xsz'}
\to 0.
$$
Moreover, exploiting \eqref{ehrling-1} and \eqref{est}, for any $\vep > 0$, 
one can take $C_\vep \geq 0$ such that
\begin{align*}
 \|\bar u_\tau (t) - u(t) \|_{\LO}
 &\leq \vep \|\bar u_\tau (t) - u(t) \|_{\Xzsi}
 + C_\vep \| \bar u_\tau (t) - u(t) \|_{\Xsz'}\\
 &\leq \vep C + C_\vep \sup_{r \in [0,T]} \| \bar u_\tau (r) - u(r) \|_{\Xsz'} \quad \mbox{ for any } \ t \in [0,T],
\end{align*}
whence there follows that
\begin{alignat}{4}
\bar u_\tau &\to u \quad &&\mbox{ strongly in } L^\infty(0,T;\LO),
\label{c:bu2-i}\\
\bar u_\tau(t) &\to u(t) \quad &&\mbox{ strongly in } \LO \ \mbox{
 for all } \ t \in [0,T],
\label{c:bu(t)2}\\
&&&\mbox{ weakly in } \Esig \ \mbox{ for all } \ t \in [0,T].
\label{c:buE}
\end{alignat}
Furthermore, noting that
\begin{align*}
\lefteqn{
 \| \bar u_\tau(t-\tau) - u(t)\|_{\Xsz'}
}\\
&\leq
\| \bar u_\tau(t-\tau) - u_\tau(t-\tau)\|_{\Xsz'}
+
\| u_\tau(t-\tau) - u_\tau(t)\|_{\Xsz'}
+
\| u_\tau(t) - u(t)\|_{\Xsz'}\\
&\leq
C \sqrt \tau
+ \int^t_{t-\tau} \|\partial_r u_\tau(r)\|_{\Xsz'} \d r
+ \| u_\tau(t) - u(t)\|_{\Xsz'}
\end{align*}
for all $t \in [\tau, T]$, we also deduce
from \eqref{est} and \eqref{c:u2-C} that
$$
\sup_{t \in [\tau,T]}
\| \bar u_\tau(t-\tau) -  u(t)\|_{\Xsz'}
\leq C \sqrt \tau + \sup_{t \in [0,T]} \| u_\tau(t) -
  u(t)\|_{\Xsz'}
\to 0 \quad \mbox{ as } \ \tau \to 0.
$$
As in \eqref{c:bu2-i}, one can further obtain
\begin{alignat}{4}
\bar u_\tau(\cdot-\tau) &\to u \quad &&\mbox{ strongly in }
L^\infty(0,T;\LO), \label{gs21}\\
\bar u_\tau(t-\tau) &\to u(t) \quad &&\mbox{ strongly in }
\LO \ \mbox{ for all } \ t > 0. \label{gs22}
\end{alignat}
In particular, \eqref{gs22} also implies $w = \Asig u + \chi -
u$ in $\Esig'$. 

We next verify that $\chi = \B(u)$ by using Minty's trick. To
this end, we observe that
\begin{align*}
\langle \B(\bar u_\tau(t)) , \bar u_\tau(t) \rangle_{L^p(\mathbb R^N)}
= ( \bar w_\tau(t) , \bar u_\tau(t) )
 - \|\bar u_\tau(t)\|_{\Xsigz}^2 
 + ( \bar u_\tau(t-\tau) , \bar u_\tau (t) ).
\end{align*}
Hence, using \eqref{gs11}, \eqref{c:bu2-i} and \eqref{gs22}, 
we obtain
\begin{align}
\lefteqn{
\limsup_{\tau \to 0}\int^T_0 \langle \B(\bar u_\tau(t)) , \bar u_\tau(t)
 \rangle_{L^p(\mathbb R^N)} \; \d t
}\nonumber\\
&= \lim_{\tau \to 0} \int^T_0 ( \bar w_\tau(t) , \bar u_\tau(t)
 ) ~\d t 
 - \liminf_{\tau \to 0}\int^T_0 \|\bar u_\tau(t)\|_{\Xsigz}^2 ~\d t
\label{minty}\\
&\quad + \lim_{\tau \to 0}\int^T_0 ( \bar u_\tau(t-\tau) , \bar u_\tau (t)
 )~\d t\nonumber\\
&\leq \int^T_0 ( w(t) , u(t) )~\d t 
 - \int^T_0 \|u(t)\|_{\Xsigz}^2 ~\d t
 + \int^T_0 ( u(t) , u (t) ) ~\d t\nonumber\\
&= \int^T_0 \langle \chi(t), u(t) \rangle_{L^p(\mathbb R^N)} ~\d t,
\nonumber
\end{align}
which along with \eqref{gs13}, \eqref{gs15} and the maximal monotonicity of the operator $u \mapsto
\B(u)$ from $L^p(\mathbb R^N)$ to $L^{p'}(\mathbb R^N)$ yields $\chi = \B(u)$ 
in $L^{p'}(\mathbb R^N)$. Thus $(u,w)$ is a weak solution of
\eqref{eq:fCH}-\eqref{eq:bc}.

%
\subsection{Energy inequalities}%
In order to derive \eqref{enineq-0}, let us fix $t \in (0,T)$ at which
\eqref{eq:cp_weak} holds and define
the Yosida approximation $\beta_\vep : \mathbb R \to \mathbb R$ of
$\beta$, i.e., $\beta_\vep (r) := (r - j_\vep(r))/\vep =
\beta(j_\vep(r))$ for $r \in \mathbb R$, where $j_\vep$ stands
for the resolvent of $\beta$ defined by $j_\vep(r) := (1 + \vep
\beta)^{-1} (r)$.
Then, since $\beta_\vep$ is Lipschitz continuous
and $\beta_\vep(0)=0$, one can observe that
$\beta_\vep (u(\cdot)) \in \Esig$ if $u \in \Esig$.
Hence, we can test \eqref{eq:cp_weak} by
$\beta_\vep(u(\cdot,t))$ to get
$$
\langle \Asig u(t), \beta_\vep(u(\cdot,t)) \rangle_{\Xzsi}
+ \langle \B(u(t)), \beta_\vep(u(\cdot,t)) \rangle_{\Esig}
= \left(w(t) + u(t), \beta_\vep(u(\cdot,t))\right).
$$
Here we note as in \eqref{eq:monotone:e} that
$$
\langle \Asig u(t), \beta_\vep(u(\cdot,t)) \rangle_{\Xzsi} \geq 0.
$$
Moreover, by the definition of Yosida approximation and the monotonicity
of $\beta$, we infer that
$$
\langle \B(u(t)), \beta_\vep(u(\cdot,t)) \rangle_{\Esig}
= \int_\Omega \beta(u(x,t)) \beta_\vep(u(x,t)) \; \d x
\geq \|\beta_\vep(u(\cdot,t))\|_{L^2(\Omega)}^2
= \|\beta_\vep(u(\cdot,t))\|_{\LO}^2.
$$
Thus we obtain
\begin{equation}\label{gs31}
 \|\beta_\vep(u(\cdot,t))\|_{\LO}
  \leq \|w(t)\|_{\LO} + \|u(t)\|_{\LO}
  \quad \mbox{ for a.e. } t \in (0,T),
\end{equation}
which implies
$$
\beta_\vep(u(\cdot,t)) \to b_t \quad \mbox{ weakly in } \LO
\mbox{ as } \vep \to 0
$$
for some $b_t \in \LO $. On the other hand, let us recall that
$\beta_\vep(r) = \beta(j_\vep(r))$. Moreover, since $j_\vep$ is non-expansive
(i.e., Lipschitz continuous with the Lipschitz constant $1$) and
$j_\vep(0) = 0$, one can easily check that
$$
\|j_\vep(u(\cdot,t))\|_{\Esig} \leq \|u(t)\|_{\Esig},
$$
which yields 
$$
j_\vep(u(\cdot,t)) \to u(t) \quad \mbox{ weakly in } \Esig \mbox{ and
strongly in } \LO,
$$
as $\vep \to 0$.
Here we also used the fact that
$$
|j_\vep(u(x,t))-u(x,t)| \leq \vep |\beta_\vep(u(x,t))| \leq \vep
|\beta(u(x,t))| \quad \mbox{ for a.e. } x \in \Omega.
$$
Therefore, by virtue of the demiclosedness of the maximal monotone
operator $u \mapsto \beta(u(\cdot))$ in $\LO \times \LO$, we conclude that
$b_t = \beta(u(\cdot,t))$ a.e.~in $\Omega$.
Moreover, \eqref{enineq-0} follows from the weak
lower-semicontinuity of the norm $\|\cdot\|_{\LO}$ in
$\LO$. Furthermore, integrating \eqref{enineq-0} in time, we obtain 
$\beta(u) \in L^2(0,T;\LO)$, as $u$ and $w$ belong to $L^2(0,T;\LO)$.

We shall finally prove that $t \mapsto \Esi(u(t))$ is differentiable
a.e.~in $(0,T)$ and derive the energy inequality, namely
\begin{equation}\label{ei1}
 \|w(t)\|_{\Xsz}^2 + \dfrac{\d}{\d t} \Esi(u(t)) \leq 0
  \quad \mbox{ for a.e. } t \in (0,T),
\end{equation}
which can be also rewritten as
\begin{equation}\label{ei2}
\langle \partial_t u(t), w(t) \rangle_{\Xsz}
\geq \dfrac{\d}{\d t} \Esi(u(t)) \quad \mbox{ for a.e. } t \in (0,T).
\end{equation}
Moreover, the right-continuity of the function $t \mapsto u(t)$
in the strong topology of $\Esig$ will also
follow as a by-product of our argument.
\begin{remark}\label{R:enineq}
 {\rm
 Before proceeding with a proof, it is worth stressing
 that, differently from what happens in the non-fractional case,
 the differentiability of  $\Esi(u(t))$ and the energy 
 inequality \eqref{ei1} are not straightforward. 
 Indeed, the energy functional $\Esi$ is smooth
 in $\Esig$ but non-convex. Hence, if one attempts to apply a standard
 chain-rule to $\Esi$ and $u(t)$, the differentiability of $u(t)$ in the
 strong topology of $\Esig$ is needed. However, $t \mapsto
 u(t)$ turns out to be differentiable only in the weaker space
 $\Xzs'$. When dealing with the standard Cahn-Hilliard equation
 (i.e., for $s = \sigma = 1$), 
 this problem may be overcome by rewriting the energy
 functional corresponding to $\Esi$ as the sum of a convex and of a
 concave part and by applying a generalized chain-rule for convex but
 (possibly) non-smooth functionals (see, e.g.,~\cite{brezis73}). 
 However, in the present case, 
 this kind of procedure seems to
 work only when $\sigma\ge s$. We shall give the highlights
 of a proof of this fact in Subsec.~\ref{sub:eneq} below.
 }
\end{remark}

In order to show \eqref{ei1}, we start with noting that, from
\eqref{ei-dc-2} and interpolation, there follows
$$
\int^t_0 \|\bar w_\tau(r)\|_{\Xsz}^2 \d r
+ \Csi(\bar u_\tau(t)) 
- \dfrac 1 2 \|\bar u_\tau(t)\|_{L^2(\Omega)}^2 
\leq \Csi(u_0) - \dfrac 1 2 \|u_0\|_{L^2(\Omega)}^2
\ \mbox{ for } \ 0 \leq t \leq T,
$$
where $\Csi(\cdot)$ denotes the convex functional of class
$C^1$ on $\Esig$ given by
$$
\Csi(v) = \dfrac 1 2 \|v\|_{\Xsigz}^2 + \dfrac 1 p
\|v\|_{L^p(\Omega)}^p
\quad \mbox{ for } \ v \in \Esig.
$$
Using the convergence relations obtained so far and the weak lower
semicontinuity of $\Csi(\cdot)$ in $\Esig$, we deduce that
\begin{equation}\label{ei-i0}
\int^t_0 \|w(r)\|_{\Xsz}^2 \d r
+ \Esi(u(t)) \leq \Esi(u(0))
\quad \mbox{ for all } \ t \in [0,T].
\end{equation}
From the uniqueness of the solution and the fact that $u(t) \in \Esig$ for
all $t \in [0,T]$, one can also derive
\begin{equation}\label{ei-i}
\int^\tau_t \|w(r)\|_{\Xsz}^2 \d r
+ \Esi(u(\tau)) - \Esi(u(t)) \le 0
\quad \mbox{ for all } \ 0 \leq t \leq \tau \leq T,
\end{equation}
which also implies that $\Esi(u(\cdot))$ is nonincreasing on $[0,T]$, whence 
it is differentiable a.e.~in $(0,T)$.
Since $u \in C([0,T];\LO) \cap C_w([0,T];\Esig)$ and
$\Csi(\cdot)$ is weakly lower semicontinuous in $\Esig$,
$\Esi(u(\cdot))$ is right-continuous on $[0,T)$, i.e., $\Esi(u(\tau)) \to
\Esi(u(t))$ as $\tau \searrow t$. Then, the same property
holds for $\|u(\cdot)\|_{\Xsigz}^2$ and
$\|u(\cdot)\|^p_{L^p(\RN)}$ by the weak lower semicontinuity of the
norms. Therefore due to the uniform convexity
of $\Xsigz$ and $L^p(\RN)$, we can also verify that $u$
is right-continuous on $[0,T)$ in the strong topology of
$\Esig$.

Furthermore, let $t$ belong to the set
$$
  \mathcal I := \big\{ t \in [0,T] \colon
   \mbox{$\Esi(u(\cdot))$ is differentiable at $t$, 
    and $t$ is a Lebesgue point of $\|w(\cdot)\|_{\Xsz}^2$} \big\}.
$$
Then $(0,T) \setminus \mathcal I$ has zero Lebesgue measure.
Dividing both sides of \eqref{ei-i} by $\tau - t > 0$
and passing to the limit as $\tau \searrow t$, we obtain \eqref{ei1}.


\subsection{Energy equality}
\label{sub:eneq}
We prove here that, under the condition $\sigma\ge s$,
$u$ belongs to $C([0,T];\Esig)$, the energy 
$\Esi(u(t))$ is absolutely continuous on 
$[0,T]$, and the inequality \eqref{energy_inequality} 
can be replaced by the following {\sl energy identity}\/:
\begin{equation}\label{energy_identity}
   \|w(t)\|_{\Xsz}^2 + \dfrac{\d}{\d t} \Esi(u(t)) = 0
    \quad \mbox{for a.e.~} t \in (0,T).
\end{equation}
The key tool in order to get \eqref{energy_identity}
is the following chain-rule formula,
which can be proved by adapting the argument given 
in~\cite[Lemma 4.1]{RS}:
\begin{lemma}\label{lemma:c_r}
Let $(\calV, \calH, \calV')$ be a Hilbert 
triple and let $\Psi:\calH\to (-\infty,+\infty]$
be a convex, proper and lower semicontinuous
functional. Moreover, let us assume that, for some $k_1>0$,
$k_2\ge 0$, there holds
\begin{equation}\label{coerc_psi}
  \Psi(v)\ge k_1 \| v \|_{\calH}^2 - k_2 \,\,\,\,\forall\, v\in \calH.
\end{equation}
Denote with $\mathcal A$ the
subdifferential of $\Psi$ with respect to 
the scalar product of $\calH$, and consider, for $T>0$,
$v\in W^{1,2} (0,T;\calV')\cap L^2(0,T;\calV)$ and
$\eta\in L^2(0,T;\calV)$ with $\eta(t)\in \mathcal{A}v(t)$
for a.e.~$t\in (0,T)$. Then, the function 
$t\mapsto \Psi(v(t))$ is absolutely continuous in $[0,T]$.
Moreover,
\begin{equation}\label{eq:c_r}
  \int_{r}^t\langle \partial_t v(\tau),\eta(\tau)\rangle_{\calV} \,\d \tau
   = \Psi(v(t))-\Psi(v(r))
    \quad \mbox{for all } \ 0 \leq r \leq t \leq T.
\end{equation} 
\end{lemma}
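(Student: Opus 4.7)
The strategy is to build up the chain-rule identity \eqref{eq:c_r} from the basic convex subdifferential inequality, handled inside the Hilbert triple $(\calV,\calH,\calV')$. First I would observe that, by the classical Lions--Magenes interpolation, the regularity $v\in W^{1,2}(0,T;\calV')\cap L^2(0,T;\calV)$ already forces $v \in C([0,T];\calH)$, so that $v(t)$ is pointwise well-defined as an element of $\calH$. The coercivity hypothesis \eqref{coerc_psi} then ensures that $\Psi(v(\cdot))$ is bounded below on $[0,T]$, while the set $I \subset [0,T]$ of ``good'' times $s$ such that $\eta(s) \in \mathcal{A}v(s)$ has full measure (and, in particular, $\Psi(v(s)) < +\infty$ for $s \in I$, since $s \in I$ implies $v(s) \in D(\mathcal{A}) \subset D(\Psi)$).

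The second step is to exploit convexity: for any $s \in I$ and any $t \in [0,T]$, the definition of subdifferential yields $\Psi(v(t)) - \Psi(v(s)) \geq (\eta(s),v(t)-v(s))_{\calH}$. Since $\eta(s) \in \calV$, the Hilbert-triple identification turns the $\calH$-product into a duality pairing, and writing $v(t)-v(s) = \int_s^t \partial_t v(\tau)\, \d\tau$ in $\calV'$ gives
$$ \Psi(v(t)) - \Psi(v(s)) \ge \int_s^t \langle \partial_t v(\tau), \eta(s) \rangle_{\calV} \, \d\tau. $$
Swapping the roles of $s$ and $t$ (when both lie in $I$) produces the reverse estimate
$$ \Psi(v(t)) - \Psi(v(s)) \le \int_s^t \langle \partial_t v(\tau), \eta(t) \rangle_{\calV} \, \d\tau. $$

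To turn this sandwich into the equality \eqref{eq:c_r}, I would pick a partition $r = t_0 < t_1 < \cdots < t_n = t$ with each $t_i \in I$, apply the two bounds on every subinterval, and sum, obtaining
$$ \int_r^t \langle \partial_t v(\tau), \bar\eta^-_n(\tau) \rangle_{\calV}\,\d\tau \le \Psi(v(t)) - \Psi(v(r)) \le \int_r^t \langle \partial_t v(\tau), \bar\eta^+_n(\tau) \rangle_{\calV}\,\d\tau, $$
where $\bar\eta^\mp_n$ are the piecewise-constant functions obtained by sampling $\eta$ at the left/right endpoints. Choosing partitions whose nodes lie in a fixed full-measure set of Lebesgue points of $\eta$ (in the Bochner sense in $\calV$), a standard diagonal argument gives $\bar\eta^\pm_n \to \eta$ strongly in $L^2(0,T;\calV)$; since $\partial_t v \in L^2(0,T;\calV')$, both bounds converge to $\int_r^t \langle \partial_t v(\tau),\eta(\tau)\rangle_{\calV}\,\d\tau$, which yields \eqref{eq:c_r}. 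The absolute continuity of $t \mapsto \Psi(v(t))$ is then an immediate consequence of this integral representation.

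The main technical obstacle is the $L^2(0,T;\calV)$-convergence $\bar\eta^\pm_n \to \eta$ of the Riemann samplings, because $\eta$ is only defined up to null sets and pointwise evaluation has to be reconciled with the Bochner-integrable structure. A robust alternative is to run the whole argument at the level of the Moreau--Yosida regularization $\Psi_\lambda$ of $\Psi$, whose Fr\'echet derivative $\mathcal{A}_\lambda$ is Lipschitz on $\calH$: one first establishes the identity for $\Psi_\lambda$ (where smoothness bypasses the sampling issue) and then passes to the limit $\lambda \searrow 0$ using the monotone convergence $\Psi_\lambda \nearrow \Psi$ together with \eqref{coerc_psi} to control the lower bounds.
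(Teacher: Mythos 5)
Your Riemann-sampling strategy is in the right spirit (it is close to the classical Brezis approach), but as written it has two genuine gaps, and it never really uses the coercivity hypothesis \eqref{coerc_psi}, which is a signal that something essential is being skipped.

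First, the convergence $\bar\eta^{\pm}_n\to\eta$ in $L^2(0,T;\calV)$ is not guaranteed merely by insisting that the nodes be Lebesgue points. Left/right Riemann samplings of a Bochner $L^2$ function at a prescribed countable family of mesh points need not converge in $L^2$; the standard remedy is a Fubini/translation-averaging argument (integrate the error over a continuum of offsets $\theta$ of the mesh and use $\int_0^{T-h}\|\eta(\cdot+h)-\eta\|_{\calV}^2\to 0$ as $h\to 0$), combined with a measure-theoretic selection so that, along a subsequence of meshes, one can place the nodes simultaneously in the a.e.\ set $I$ and get strong $L^2$ convergence. A vague ``standard diagonal argument'' does not substitute for this; this is exactly the nontrivial technical point and it needs to be spelled out.

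Second, the proposed Moreau--Yosida ``robust alternative'' does not circumvent the difficulty: the Yosida approximation $\mathcal A_\lambda=\lambda^{-1}(I-J_\lambda)$ maps $\calH$ into $\calH$, not into $\calV$. Since $\partial_t v$ only lives in $\calV'$, the quantity $\langle\partial_t v(t),\mathcal A_\lambda v(t)\rangle$ is ill-defined unless $\mathcal A_\lambda v(t)\in\calV$, which is precisely what you cannot assert. So the chain rule for $\Psi_\lambda$ is no easier to establish in the Hilbert-triple setting than for $\Psi$ itself. Moreover, even if one could write such an identity, $\mathcal A_\lambda v(t)\to(\partial\Psi)^\circ v(t)$ (the minimal section), which need not coincide with $\eta(t)$, so an extra argument would be required to match the limit with the given selection $\eta$.

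The role of \eqref{coerc_psi} is worth bringing to the foreground. A cleaner route, and presumably the one underlying the reference the paper cites, is to extend $\Psi$ to $\tilde\Psi:\calV'\to(-\infty,+\infty]$ by $\tilde\Psi=\Psi$ on $\calH$ and $+\infty$ off $\calH$; the quadratic lower bound \eqref{coerc_psi} is exactly what makes $\tilde\Psi$ lower semicontinuous on $\calV'$ (a sequence in $\calV'$ with $\tilde\Psi$ bounded is automatically bounded in $\calH$, hence weakly convergent there). One checks that $\eta(t)\in\calV$ together with $\eta(t)\in\partial_{\calH}\Psi(v(t))$ implies $\eta(t)\in\partial_{\calV'}\tilde\Psi(v(t))$, and then applies the chain rule for a convex lsc functional on a reflexive Banach space with $v\in W^{1,2}(0,T;\calV')$ and $\eta\in L^2(0,T;\calV)$. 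This removes the pairing mismatch at the source. Your proof never invokes coercivity except to note that $\Psi(v(\cdot))$ is bounded below, which is too weak; this is the main conceptual step missing from the proposal.
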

\noindent%
We apply the above Lemma with the following 
choices:
\begin{itemize}
\item $\calH=H_0$, $\calV=\Xsz$, see \eqref{H-tr}.
\item $\Psi(v) = \frac12\| v\|^2_{\Xsigz} 
 + \int_{\Omega}\hat{\beta}(v(x))\,\d x$.
\end{itemize}
Then, clearly, $\Psi$ is proper, lower semicontinuous
and convex. Moreover, thanks to the fractional Poincar\'e
inequality \eqref{eq:poincare},
it satisfies the coercivity assumption \eqref{coerc_psi}. 
Now, let $u,w$ be the solution given by Theorem~\ref{th:eu}. 
Then, by Definition~\ref{def:weaksol}, we have
$$ 
  u\in W^{1,2} (0,T;\calV') \,\,\,\,\hbox{ and } \ w\in L^2(0,T; \calV).
$$
On the other hand, being $\sigma\ge s$, we also have 
$$ 
  u \in L^2(0,T;\calV).
$$
Thus, setting $\eta := w + u$, it follows that $\eta\in L^2(0,T;\calV)$;
moreover, thanks to equation~\eqref{eq:cp_weak},
$\eta(t)\in \partial\Psi(u(t))$ for a.e.~$t\in (0,T)$.
Hence, \eqref{energy_identity} follows from
Lemma~\ref{lemma:c_r}.
\begin{remark}\label{new:sig-s}
{\rm
 When $\sigma < s$, 
 we do not know whether
 $\eta = w + u \in L^2(0,T;\calV)=L^2(0,T;\Xsz)$,
 because $\Xsigz$ is not included into $\Xsz$.
 Hence, the assumptions of Lemma~\ref{lemma:c_r}
 are not necessarily satisfied and the energy 
 identity~\eqref{energy_identity} remains
 as an open issue in this case.
}
\end{remark}
%

%
%

\section{Singular limits}
\label{sec:conve}

\bigskip

\subsection{Uniform estimates}\label{Ss:UE}

Before proving our results on singular limits as $\sigma \to 0$ or $s \to
0$, we first establish uniform estimates
with respect to $s$ and $\sigma$ for the unique weak solution
$(u,w)$ to the fractional Cahn-Hilliard system
\eqref{eq:fCH}-\eqref{eq:bc}. 
To this end, let us recall the inequality \eqref{enineq-0} of
Theorem \ref{th:eu},
\begin{equation}\label{enineq-0-i}
\|\beta(u(\cdot,t))\|_{\LO}^2 \leq 2 \left( \|w(t)\|_{\LO}^2
+ \|u(t)\|_{\LO}^2 \right)
\quad \mbox{ for a.e. } \ t \in (0,T).
\end{equation}
Moreover, we also recall the energy inequality \eqref{energy_inequality},
$$
\dfrac{\d}{\d t} \Esi(u(t)) +  \|w(t)\|_{\Xsz}^2 \leq 0
  \quad \mbox{ for a.e. } t \in (0,T).
$$
Then, integrating both sides in $t$ we get
\begin{equation}\label{eq:energy1}
 \sup_{t \in [0,T]} \left(
 \dfrac 1 2 \|u(t)\|_{\Xzsi}^2 + \int_{\Omega}W(u(x,t))\, \dx \right)
   + \|w\|_{L^2(0,T;\Xzs)}^2 \le Q(\Ezsi).
\end{equation}
Here and henceforth, $Q(\cdot)$ denotes a computable nonnegative-valued
function which is monotonely increasing in its argument(s) and may vary
from line to line. In particular, the expression of $Q$ may depend on
$p$, $T$ and $|\Omega|$; however, it is always independent both of
$\sigma$ and of $s$.

{\bf In case $p>2$\,:} since $W$ is coercive, it follows immediately
from \eqref{eq:energy1} that
\begin{equation}\label{eq:estimate_uw}
  \| u \|_{L^\infty(0,T;\Xzsi)}^2 
   + \| u \|_{L^\infty(0,T;L^p(\Omega))}^p
   + \|w\|_{L^2(0,T;\Xzs)}^2 \le Q(\Ezsi),
\end{equation}
which along with the boundedness of $\As : \Xzs \to \Xzs'$ and a
comparison of terms in \eqref{eq:ch_weak} gives
\begin{equation}\label{eq:energy2}
\| \partial_t u \|_{L^2(0,T;\Xzs')}^2
\stackrel{\eqref{eq:ch_weak}}{=} \| \As w \|_{L^2(0,T;\Xzs')}^2 
\le Q(\Ezsi).
\end{equation}
Moreover, by \eqref{e:Bu},
\begin{equation}
\|\B(u)\|_{L^\infty(0,T; L^{p'}(\RN))} 
\leq \|u\|_{L^\infty(0,T;L^p(\Omega))}^{p-1} \leq Q(\Ezsi). 
\end{equation}
On the other hand, combining \eqref{eq:estimate_uw} with
\eqref{enineq-0-i}, we get
\begin{equation}\label{est:beta}
  \|\beta(u)\|_{L^2(0,T;\LO)}^2 \le Q(\Ezsi).
\end{equation}
In turn, this estimate clearly implies that
\begin{equation}\label{eq:estW3}
  \|W'(u)\|_{L^2(0,T;\LO)}^2 \le Q(\Ezsi).
\end{equation}
Hence by \eqref{eq:cp_weak}, we find that
\begin{align*}
\langle \Asig u(t), \phi \rangle_{\Esig}
&\stackrel{\eqref{eq:cp_weak}}= \langle w(t) - \B(u(t)) + u(t), \phi \rangle_{\Esig}\\
&= \int_\Omega \left( w(x,t) - \beta(u(x,t)) + u(x,t) \right) \phi(x) \; \d x\\
&\leq \left( \|w(t)\|_{\LO} + \|\beta(u(\cdot,t))\|_{\LO} +
 \|u(t)\|_{\LO}\right) \|\phi\|_{\LO}
\quad \mbox{ for all } \ \phi \in \DO.
\end{align*}
Since $\DO$ is dense in $\LO$, one has 
\begin{equation}\label{est:Delsig}
 \left\|\Asig u\right\|_{L^2(0,T;\LO)} \le Q(\Ezsi),
\end{equation}
where $\Asig : \LO \to \LO$ stands for the
$\LO$-fractional Laplacian with domain $D(\Asig) \subsetneq \LO$ (see \S
\ref{Ss:L2}) and $\Asig u(t) : \LO \to \mathbb R$ is the unique bounded
linear extension onto $\LO$ of the functional $\Asig u(t) : \Xsigz \to
\mathbb R$.

{\bf In case $ 1 < p < 2$ and $\sigma \in (0,1)$ is fixed:} (the argument
below is still available for $p > 2$ as well) by applying $\As^{-1}$ to
both sides of \eqref{eq:ch_weak} and
by utilizing \eqref{eq:cp_weak}, we have
$$
\As^{-1} \left( \partial_t u (t) \right) + \Asig u(t) + \B(u(t)) -
u(t) = 0
\ \mbox{ in } \Esig', \quad 0 < t < T.
$$
Test it by $u(t) \in \Esig$. It follows that
$$
\dfrac 1 2 \dfrac{\d}{\d t} \|u(t)\|_{\Xsz'}^2
+ \|u(t)\|_{\Xsigz}^2 + \|u(t)\|_{L^p(\Omega)}^p =
\|u(t)\|_{L^2(\Omega)}^2
\quad \mbox{ for a.e. } \ 0 < t < T.
$$

Set $X = \Xzr$ or $X = \mathcal E_r := \Xzr \cap L^p(\RN)$ for a fixed
constant $r \in (s,1)$ or $X = H^1_0(\Omega)$. Then $X$ is continuously
embedded in $\Xsz$ uniformly for $s \to 0$. More precisely, there exists
a constant $C_0 > 0$ independent of $s \to 0$ such that 
\begin{equation}\label{embed-X}
 \|v\|_{\Xzs} \leq C_0 \|v\|_X \quad \mbox{ for all } \ v \in X.
\end{equation}
Indeed, as in~\cite[Proof of Proposition 2.1]{Dine_Pala_Vald}, one can
verify that, for all $v \in X$,
\begin{align*}
\lefteqn{
\dfrac{C(s)}2
\iint_{\mathbb R^{2N}} \dfrac{|v(x)-v(y)|^2}{|x-y|^{N+2s}} ~ \d x \; \d y
}\\
&= \dfrac{C(s)}2 \int_{\mathbb R^N} \int_{\{x \in \mathbb R^N \colon
 |x-y| > 1\}} \dfrac{|v(x)-v(y)|^2}{|x-y|^{N+2s}} ~ \d x \; \d y
\\
&\quad 
 + \dfrac{C(s)}2 \int_{\mathbb R^N} \int_{\{x \in \mathbb R^N \colon
 |x-y| \le 1\}} \dfrac{|v(x)-v(y)|^2}{|x-y|^{N+2s}} ~ \d x \; \d y\\
&\leq \dfrac{C(s)}s |\mathbb S^{N-1}| \|v\|_{L^2(\Omega)}^2
+
 \begin{cases}
 \frac{C(s)}{C(r)}\|v\|_{\Xzr}^2
  &\mbox{ for } \ X = \Xzr \mbox{ or } \mathcal E_r,\\[2mm]
  \frac{|\mathbb S^{N-1}|}{4} \frac{C(s)}{1-s} \|\nabla v\|_{L^2(\Omega)}^2
  &\mbox{ for } \ X = H^1_0(\Omega).
 \end{cases}
\end{align*}
Here $|\mathbb S^{N-1}|$ stands for the surface area of a unit sphere in
$\mathbb R^N$. 
Finally, exploit the asymptotics \eqref{eq:limC} of $C(r)$ as $r \searrow 0$ to
obtain \eqref{embed-X}. Moreover,  \eqref{embed-X} yields
\begin{equation}\label{embed-X'}
 \|\zeta\|_{X'} \leq C_0 \|\zeta\|_{\Xzs'} \quad \mbox{ for all } \
  \zeta \in \Xzs',
\end{equation}
which particularly gives
\begin{equation}\label{embed-X'2}
 \|u\|_{X'} \leq C_0 \|u\|_{\Xzs'} \quad \mbox{ for all } \ u \in \LO
 \, (\simeq \LO').
\end{equation}
Indeed, for any $\phi \in X \subset \Xzs$, one finds that
$$
\langle \zeta, \phi \rangle_X
= \langle \zeta, \phi \rangle_{\Xzs}
\leq \|\zeta\|_{\Xzs'} \|\phi\|_{\Xzs}
\leq C_0 \|\zeta\|_{\Xzs'} \|\phi\|_X
\quad \mbox{ for } \ \zeta \in \Xzs',
$$
which gives \eqref{embed-X'}.
On the other hand, from the dense and compact embeddings $\Xsigz
\hookrightarrow \LO \, (\simeq \LO') \hookrightarrow X'$ along
with Ehrling's compactness lemma~\cite[Lemma 8]{Simon}, for any $\vep > 0$
there exists a positive constant $C_{\vep,\sigma}$, which is independent of $s$
but may depend on $\sigma$, such that
\begin{equation}\label{ehrling-2}
\|v\|_{L^2(\Omega)}^2 = \|v\|_{\LO}^2 \leq \vep \|v\|_{\Xsigz}^2 +
 C_{\vep,\sigma} \|v\|_{X'}^2 \quad \mbox{ for all } \ v \in \Xsigz.
\end{equation}
Therefore we deduce that
$$
\dfrac 1 2 \dfrac{\d}{\d t} \|u(t)\|_{\Xsz'}^2
+ \dfrac 1 2 \|u(t)\|_{\Xsigz}^2 + \|u(t)\|_{L^p(\Omega)}^p
\leq C_{\frac 1 2,\sigma} \|u(t)\|_{X'}^2
\quad \mbox{ for a.e. } \ 0 < t < T.
$$
The integration of both sides over $(0,t)$ along with \eqref{embed-X'2}
yields
$$
\|u(t)\|_{X'}^2 + \int^t_0 \left( \|u(\tau)\|_{\Xsigz}^2
+ \|u(\tau)\|_{L^p(\Omega)}^p \right) \d \tau
\leq C_\sigma \left( \|u_0\|_{\Xsz'}^2 + \int^t_0 
\|u(\tau)\|_{X'}^2 \, \d \tau \right)
$$
for some constant $C_\sigma > 0$ (depending on $\sigma$).
Hence, exploiting Gronwall's inequality, we obtain
\begin{equation}\label{eq:energy3}
 \sup_{t \in [0,T]} \|u(t)\|_{X'}^2
+ \int^T_0 \left( \|u(t)\|_{\Xsigz}^2
+ \|u(t)\|_{L^p(\Omega)}^p \right) \d t
\leq Q(C_\sigma,\|u_0\|_{\Xsz'}^2).
\end{equation}
Apply \eqref{ehrling-2} to \eqref{eq:energy1} and employ
\eqref{eq:energy3}. Then we obtain \eqref{eq:estimate_uw} with a bound
depending on $C_\sigma$, $\|u_0\|_{\Xzs'}$ and $\Ezsi$. 
Furthermore, relations analogous to
\eqref{eq:energy2}-\eqref{est:Delsig} also follow with similar
bounds. More precisely, one deduces that
\begin{align}
\|u\|_{L^\infty(0,T;\Xzsi)}^2 + \|u\|^p_{L^\infty(0,T;L^p(\Omega))}
 + \|\beta(u)\|_{L^2(0,T;\LO)}^2
 + \|W'(u)\|_{L^2(0,T;\LO)}^2 
\qquad
\label{unif_bound-2}\\
 + \left\|\Asig u\right\|_{L^\infty(0,T;\Xzsi')}^2
 \le Q(C_\sigma,\|u_0\|_{\Xsz'}^2, \Ezsi).
\nonumber
\end{align}
Moreover, we also have
$$
\|\partial_t u\|_{L^2(0,T;\Xzs')}^2 
+ \|\As w\|_{L^2(0,T;\Xzs')}^2
+ \|w\|_{L^2(0,T;\Xzs)}^2
\leq Q(C_\sigma,\|u_0\|_{\Xsz'}^2, \Ezsi).
$$
Hence by virtue of \eqref{embed-X'} and Poincar\'e's
inequality \eqref{eq:poincare-opt} along with Proposition
\ref{prop:asy_1eigen}, it follows that
\begin{equation}\label{unif_bound-2.5}
\| \partial_t u \|_{L^2(0,T;X')}^2 + \|\As w\|_{L^2(0,T;X')}^2 +
 \|w\|_{L^2(0,T;\LO)}^2 \le Q(C_\sigma,\|u_0\|_{\Xsz'}^2, \Ezsi).
\end{equation}
Finally, by \eqref{e:Bu}, it holds that
\begin{equation}\label{unif_bound-3}
 \|\B(u)\|_{L^\infty(0,T;L^{p'}(\mathbb R^N))} \leq Q(C_\sigma,\|u_0\|_{\Xsz'}^2,
  \Ezsi).
\end{equation}


\subsection{Limit of fractional Laplacian in Bochner spaces}

In this section, we shall generalize Lemma \ref{cor:weak_limit_s_zero}
for later use of proving the convergence of $\Ark u_k$ as $r_k \searrow 0$ in an
appropriate Bochner space.
Throughout this subsection, we use the notation $\Xb$ and $\Xzb$ even
for $\beta \geq 1$ in the following sense
\begin{align*}
 \Xb &:= \widehat{H^\beta}(\RN) = \left\{
u \in \mathcal S(\RN)' \colon \left(1 + |\xi|^2 \right)^{\beta/2} \hat u(\xi) \in L^2(\RN_\xi)
\right\},\\
 \Xzb &:= \left\{ u \in \Xb \colon u = 0 \mbox{ in } \RN \setminus
 \Omega \right\} \quad \mbox{ for } \ \beta \geq 1.
\end{align*}
Then one finds that
$$
\Xb \hookrightarrow \mathcal{X}_{\gamma} \hookrightarrow \mathcal{X}_0 =
\LR, \quad
\Xzb \hookrightarrow \mathcal{X}_{\gamma,0} \hookrightarrow
\mathcal{X}_{0,0} = \LO \quad
\mbox{ if } \ \beta \geq \gamma > 0
$$
with continuous densely defined canonical injections. Hence we also
have dual relations, 
$$
\LR' \hookrightarrow
\mathcal{X}_{\gamma}' \hookrightarrow \Xb',
\quad
\LO' \hookrightarrow
\mathcal{X}_{\gamma,0}' \hookrightarrow \Xzb'
\quad \mbox{ if } \ \beta \geq \gamma > 0
$$
densely and continuously.
%
%
For each $u \in \Xzb$ and $\beta, \gamma \geq 0$, one can define $T(u)
\in \mathcal X_\gamma'$ by
$$
\langle T(u), \phi \rangle_{\mathcal X_\gamma} 
:= \int_\Omega u(x) \phi(x) \, \d x \quad \mbox{ for } \ \phi \in
\mathcal X_\gamma.
$$
Then $T : \Xzb \to \mathcal X_\gamma'$ is continuous due to the
continuous embeddings described above. Hence $\Xzb$ is continuously
embedded in $\mathcal X_{\gamma}'$ by $T$. From now on, we simply write $u$
instead of $T(u)$ if no confusion may arise.

\begin{lemma}\label{cor:weak_limit_s_zero-rev}
 Let $u$ and $\xi$ be integrable functions 
  of $(0,T)$
 with values in $\Xzb'$ for some constant $\beta > 0$ 
 satisfying $\beta \neq n - 1/2$ with $n \in \mathbb N$.
 Let $\{r_k\}$ be a sequence in $(0,\beta)$ such that $r_k \searrow
 0$ as $k \to \infty$ and consider a sequence $\{u_k\}$ of strongly
 measurable functions in $(0,T)$ with values in $\Xzrk$.
In addition,
 assume that
\begin{align}
 \int^T_0 \left\langle u_k(t) , \varphi_k \right\rangle_{\Xb} \phi(t) \;
 \d t 
&\to  \int^T_0 \left\langle u(t) , \varphi \right\rangle_{\Xzb} \phi(t) \;
 \d t ,\label{lim_Dels:H1}\\
 \int^T_0 \left\langle \Ark u_k(t), \varphi
 \right\rangle_{\Xzrk} \phi(t) \; \d t
 &\to  \int^T_0 \left\langle \xi(t), \varphi
 \right\rangle_{\Xzb} \phi(t) \; \d t
\label{lim_Dels:H2}
\end{align}
for any $\varphi \in \DO$, $\phi \in
 C^\infty_0(0,T)$ and $\varphi_k \in \Xb$ satisfying
 $\varphi_k \to \varphi$ strongly in $\Xb$.
 Then it holds that $u (t) = \xi(t)$ in $\Xzb'$ for a.e.~$t
 \in (0,T)$.
\end{lemma}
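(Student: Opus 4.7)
\smallskip
\noindent\textbf{Proof plan.} The idea is to move the operator $\Ark$ off $u_k$ and onto a fixed smooth test function by means of the self-adjointness identity \eqref{eq:distr_lapl3}, and then to exploit the pointwise convergence $\Drk\varphi\to\varphi$ supplied by Lemma~\ref{lem:limit_s_zero}. The hypotheses grant us the freedom to pick \emph{any} sequence $\varphi_k\to\varphi$ strongly in $\Xb$, so for $\varphi\in\DO$ the natural choice is $\varphi_k:=\Drk\varphi$. This reduces the lemma to the single pointwise identity computed below, followed by a density argument for $\DO$ in $\Xzb$.

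\smallskip

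Fix $\varphi\in\DO$ and $\phi\in C^\infty_0(0,T)$, and set $\varphi_k:=\Drk\varphi$. Since $\varphi\in\Sch$, Lemma~\ref{lem:limit_s_zero} gives $\varphi_k\to\varphi$ strongly in $H^\alpha(\RN)$ for every $\alpha\ge 0$, hence in particular strongly in $\Xb=H^\beta(\RN)$. Next, for a.e.~$t\in(0,T)$ the function $u_k(t)\in\Xzrk\subset L^2(\RN)$ vanishes outside $\Omega$, while $\Drk\varphi\in L^2(\RN)$; Plancherel's theorem together with \eqref{eq:distr_lapl3} then yields
\begin{equation*}
\langle\Ark u_k(t),\varphi\rangle_{\Xzrk}
=\int_{\RN}\Drkm u_k(t)\,\Drkm\varphi\,\dx
=\int_{\RN}u_k(t)\,\Drk\varphi\,\dx
=\langle u_k(t),\varphi_k\rangle_{\Xb},
\end{equation*}
the last equality being the canonical $L^2(\RN)\hookrightarrow\Xb'$ realisation of the duality pairing.

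\smallskip

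Multiplying by $\phi(t)$ and integrating over $(0,T)$, hypothesis \eqref{lim_Dels:H2} sends the leftmost quantity to $\int_0^T\langle\xi(t),\varphi\rangle_{\Xzb}\phi(t)\,\dt$, while hypothesis \eqref{lim_Dels:H1}, applied with this specific $\varphi_k$, sends the rightmost one to $\int_0^T\langle u(t),\varphi\rangle_{\Xzb}\phi(t)\,\dt$. Therefore
$$\int_0^T\langle u(t)-\xi(t),\varphi\rangle_{\Xzb}\phi(t)\,\dt=0\qquad\text{for all }\varphi\in\DO,\ \phi\in C^\infty_0(0,T).$$
Varying $\phi$ by the fundamental lemma of the calculus of variations gives $\langle u(t)-\xi(t),\varphi\rangle_{\Xzb}=0$ outside a $\varphi$-dependent null set, and selecting a countable subset of $\DO$ dense in $\Xzb$ yields a single null set valid simultaneously for all such $\varphi$. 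The main delicate point lies precisely in this density of $\DO$ in $\Xzb$, which holds exactly under the assumption $\beta\neq n-1/2$ (cf.~\cite[Ch.~1, Theorem~11.1]{lions_mag}); this is the reason for the exceptional condition in the statement. Granting the density, we conclude $u(t)=\xi(t)$ in $\Xzb'$ for a.e.~$t\in(0,T)$.
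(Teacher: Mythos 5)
Your proof is correct and follows essentially the same path as the paper's: choose $\varphi_k:=\Drk\varphi$, move $\Ark$ off $u_k$ via the self-adjointness identity \eqref{eq:distr_lapl3} and Plancherel, invoke Lemma~\ref{lem:limit_s_zero} for $\Drk\varphi\to\varphi$ in $\Xb$, match the two limits from \eqref{lim_Dels:H1}--\eqref{lim_Dels:H2}, and finish with density of $\DO$ in $\Xzb$ under $\beta\neq n-1/2$. The only cosmetic difference is the order of the final localization steps: you vary $\phi$ first and then take a countable dense family of $\varphi$'s, while the paper varies $\varphi$ first and then applies the du Bois-Reymond lemma for Bochner integrals; both variants are standard and equivalent.
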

\begin{proof}
From the continuous embeddings $\Xzrk \hookrightarrow \LR \simeq
 (\LR)' \hookrightarrow \Xb'$, we note that the function $u_k$ is
 strongly measurable with values in $\Xb'$ as well. For any $\varphi \in \DO$
 and $\phi \in C^\infty_0(0,T)$, we observe by \eqref{lim_Dels:H1} that
\begin{align}
\lefteqn{
 \int^T_0 \left\langle \Ark u_k(t), \varphi \right\rangle_{\Xzrk}
 \phi(t) \; \d t
}\label{Apdx:eq1}\\
&= \dfrac{C(r_k)}2 \int^T_0
 \left(
 \iint_{\RRR \mathbb R^{2N} \EEE} 
 \dfrac{\left(u_k(x,t)-u_k(y,t)\right)
 \left(\varphi(x)-\varphi(y)\right)}{|x-y|^{N+2r_k}}
 \; \d x \, \d y \right) \phi(t) \; \d t\nonumber\\
&= \int^T_0 \left( \Drk \varphi , u_k(t) \right)
 \phi(t) \; \d t\nonumber\\
&= \int^T_0 \left\langle u_k(t), \Drk \varphi
 \right\rangle_{\Xb} \phi(t) \; \d t
\stackrel{\eqref{lim_Dels:H1}}\to 
\int^T_0 \left\langle 
u(t), \varphi  \right\rangle_{\Xzb}
 \phi(t) \; \d t. \nonumber
\end{align}
 Here we used the fact that
 $\Drk \varphi \in \Xb$ and $\Drk \varphi
 \to \varphi$ strongly in $\Xb$
 (uniformly in $t$) as $k \to \infty$ by Lemma \ref{lem:limit_s_zero}.
By virtue of \eqref{lim_Dels:H2}, the left-hand side of \eqref{Apdx:eq1}
 converges as follows:
$$
 \int^T_0 \left\langle \Ark u_k(t), \varphi \right\rangle_{\Xzrk}
 \phi(t) \; \d t
\stackrel{\eqref{lim_Dels:H2}}\to  \int^T_0 \left\langle \xi(t), \varphi \right\rangle_{\Xzb}
 \phi(t) \; \d t.
$$
Thus we obtain
$$
\int^T_0 \left\langle \xi(t), \varphi \right\rangle_{\Xzb}
 \phi(t) \; \d t
= 
\int^T_0 \left\langle u(t), \varphi \right\rangle_{\Xzb}
 \phi(t) \; \d t.
$$
Recall that $\DO$ is dense in $\Xzb$ if $\beta \neq
 n - 1/2$ for $n \in \mathbb N$ (see~\cite[Theorems 11.4 and 11.1,
 Chap.~I]{lions_mag}). Hence, from the arbitrariness of $\varphi \in
 \DO$, we have
$$
\int^T_0 \left( \xi(t) - u(t) \right) \phi(t) \; \d t = 0
\ \mbox{ in } \Xzb'
\quad \mbox{ for all } \ \phi \in C^\infty_0(0,T).
$$
Finally, applying du Bois-Reymond' lemma for Bochner integrals, we
 conclude that $\xi(t) = u (t)$ in $\Xzb'$ for a.e.~$t \in (0,T)$.
\end{proof}

\begin{remark}\label{Apdx:R:appli}
{\rm
\begin{enumerate}
 \item[(i)] All the assumptions of Lemma \ref{cor:weak_limit_s_zero-rev} can
	    be proved to hold whenever $u_k \to u$ weakly in
	    $L^1(0,T;\Xb')$ and $\Ark u_k(\cdot) \to \xi$ weakly in
	    $L^1(0,T;\Xzb')$ as $k \to \infty$ for some $u \in
	    L^1(0,T;\Xb')$ and $\xi \in L^1(0,T;\Xzb')$. Indeed, the product of
	    test functions $\varphi_k \phi$ converges to $\varphi \phi$
	    strongly in $L^\infty(0,T;\Xb)$ and $\varphi \phi$ belongs
	    to $L^\infty(0,T;\Xzb)$, since 
	    $\varphi_k \to \varphi$ strongly in $\Xb$ with $\varphi \in
	    \DO \subset \Xzb$ and $\phi \in C^\infty_0(0,T)$. Moreover,
	    since $\Xzb \subset \Xb$, one 
	    observes that $\Xb' \hookrightarrow \Xzb'$, whence
	    $u$ belongs to $L^1(0,T;\Xzb')$.
 \item[(ii)] One can also derive a similar result for sequences
	     independent of $t$. More precisely, if $u_k \in
	     \Xzrk$, $u_k \to u$ weakly in $\Xb'$, $\Ark u_k
	     \in \Xzb'$ and $\Ark u_k \to \xi$ weakly in
	     $\Xzb'$, then $\xi = u$ in $\Xzb'$. Indeed, set $u_k
	     (\cdot) \equiv u_k$. Then $u_k \to v(\cdot) \equiv u$
	     weakly in $L^p(0,T;\Xb')$ and $\Ark u_k(\cdot)
	     \equiv \Ark u_k$ converges to $\eta(\cdot)
	     \equiv \xi$ weakly in $L^p(0,T;\Xzb')$ for any $p \in
	     [1,\infty)$; hence, all the assumptions 
	     of Lemma~\ref{cor:weak_limit_s_zero-rev}
	     hold true by (i) above. 
\end{enumerate}
}
\end{remark}

\subsection{Proof of Theorem \ref{th:pm}}\label{Ss:PM}
Let $\{\sigma_k\}$ be a sequence in $(0,1)$ such that $\sigma_k \searrow 0$ and
let $(u_k,w_k)$ be the family of weak solutions to
\begin{alignat}{4} \label{eq:ch_weak_k}
 & \partial_t u_k + \As w_k = 0 \ &\hbox{ in }& \Xzs', \\
 & w_k = \Asigk u_k + \B(u_k) - u_k \ &\hbox{ in }&
 \Esik' \label{eq:cp_weak_k}
\end{alignat}
with $u_k(0)=u_{0,k}$. Then recalling uniform estimates
\eqref{eq:estimate_uw}-\eqref{est:Delsig} in \S \ref{Ss:UE} along with
hypothesis \eqref{hp:uzk}, one can
take weak limits $u\in L^\infty(0,T;\LpO) \cap W^{1,2}(0,T;\Xzs')$,
$w\in L^2(0,T;\Xzs)$, $\bar\beta \in L^2(0,T;\LO)$ and $\xi \in
L^2(0,T;\LO)$ such that, up to a (non-relabeled) subsequence of $\{k\}$,
\begin{alignat}{4} \label{eq:convu1}
 u_k &\to u \quad && \hbox{ weakly star in } L^\infty(0,T;\LpO),\\
 & &&\mbox{ weakly in } W^{1,2}(0,T;\Xzs'),\\
 w_k &\to w \quad && \hbox{ weakly in } L^2(0,T;\Xzs),\label{eq:convw1}\\
 \As w_k &\to \As w \quad && \hbox{ weakly in } L^2(0,T;\Xzs'),\label{eq:convAsw}\\
 \beta(u_k) &\to \bar \beta 
 \quad && \hbox{ weakly in } L^2(0,T;\LO),\label{eq:convbeta1}\\
 \Asigk u_k &\to \xi
 \quad && \hbox{ weakly in } L^2(0,T;\LO).\label{eq:convAsu}
\end{alignat}
%
%
%
%
It follows immediately that $\partial_t u + \As w = 0$ in $\Xzs'$.
Applying Lemma \ref{cor:weak_limit_s_zero-rev} with any $\beta > 0$ to
$\Asigk u_k(t)$ and $u_k(t)$ along
with the weak convergence relations \eqref{eq:convu1} and
\eqref{eq:convAsu}, we obtain $\xi(t) = u(t)$ in $\Xzb'$ for a.e.~$t \in
(0,T)$. Moreover, since $\xi(t)$ and $u(t)$ lie in $\LO$, which is dense
in $\Xzb'$, we see that $\xi = u$ a.e.~in $\Omega \times (0,T)$. 
For all $\phi \in \DO$ and $\varphi \in C^\infty_0(0,T)$, it follows
from \eqref{eq:cp_weak_k} that
\begin{align*}
\int^T_0 ( w_k(t) , \varphi ) \, \phi(t) \; \d t
&= \int^T_0 ( \Asigk u_k(t), \varphi ) \,
\phi(t) \; \d t
\\
&\quad + \int^T_0 \left( \beta(u_k(\cdot,t)) , \varphi \right) \phi(t) \; \d t - \int^T_0 (u_k(t),  \varphi) \,
 \phi(t) \; \d t.
\end{align*}
Passing to the limit as $k \nearrow \infty$, we obtain
\begin{align*}
\int^T_0 ( w(t) , \varphi ) \, \phi(t) \; \d t
&= \int^T_0 ( \xi(t), \varphi ) \,
\phi(t) \; \d t
\\
&\quad + \int^T_0 \left( \bar\beta(t), \varphi \right) \phi(t) \; \d t - \int^T_0 (u(t),  \varphi) \,
 \phi(t) \; \d t,
\end{align*}
which together with the density of $\DO$ in $\LO$ and the arbitrariness
of $\phi \in C^\infty_0(0,T)$ implies that
$$
w = \xi + \bar\beta - u \ \mbox{ in } \LO, \quad \mbox{
a.e.~in }(0,T).
$$
Thus we obtain $w(t) = \bar \beta(t)$ in $\LO$ for a.e.~$t \in (0,T)$,
or, in other words,
\begin{equation}\label{bb=w}
w = \bar \beta \quad \mbox{ a.e.~in } \Omega \times (0,T).
\end{equation}

For each $t \in [0,T]$, since $\{u_k(t)\}$ is bounded in $\LO$ and
$\LO$ is compactly embedded in $\Xzs'$, the sequence $\{
u_k(t)\}$ is precompact in $\Xzs'$. Moreover, $t \mapsto u_k(t)$ is
equicontinuous on $[0,T]$ with values in $\Xzs'$ for $k \in \mathbb
N$. Therefore, thanks to Ascoli's lemma, we infer that
\begin{equation}\label{eq:convu:g}
 u_k \to u \quad \hbox{ strongly in } C([0,T];\Xzs').
\end{equation}
Since $u_{0,k} \to u_0$ strongly in $\Xzs'$ by assumption, one
can check that $u(t) \to u_0$ strongly in $\Xzs'$ as $t \searrow
0$. In particular, $u(0) = u_0$.

Now, the major task is to identify the limit $\bar\beta$ as $\beta(u)$,
namely proving that $\bar\beta = \beta(u)$ a.e.~in $\RN\times
(0,T)$. To this end, we shall use Minty's trick,
i.e., we claim that
\begin{equation}\label{eq:minty}
  \limsup_{k\nearrow +\infty}\int_{0}^{T}\int_{\RN}\beta(u_k) u_k \, \dx \, \dt
  \le \int_{0}^T\int_{\RN}\bar\beta u \, \dx \, \dt.
\end{equation}
Actually, testing \eqref{eq:cp_weak_k} by $u_k$, we find that
\begin{equation}\label{eq:semicont1}
  \int_{0}^{T}\int_{\RN}\beta(u_k) u_k \, \dx\, \dt 
 = - \int_0^{T} \|u_k(t)\|_{\Xzsik}^2 \, \dt
 +\int_{0}^T \| u_k(t) \|^2_{L^2(\RN)} \, \dt 
   + \int_{0}^T\int_{\RN}w_k u_k \, \dx\, \dt.
\end{equation}
Taking the $\limsup_{k\nearrow +\infty}$ of both sides,
we have
\begin{align}\label{eq:semicont2}
  & \limsup_{k\nearrow +\infty}\int_{0}^{T}\int_{\RN}\beta(u_k) u_k \,\dx\,\dt\\
 \nonumber 
  & \le \limsup_{k\nearrow +\infty} \int_{0}^T 
      \left(\| u_k(t) \|^2_{L^2(\RN)} -
 \|u_k(t)\|_{\Xzsik}^2 \right) \, \dt
    + \int_{0}^{T} \int_{\RN} w u \, \dx \, \dt.
\end{align}
In particular, in order to take the limit of the last integral,
we used \eqref{eq:convw1} together with \eqref{eq:convu:g}, and observed
that
\begin{equation}\label{ni}
\int^T_0 \int_{\RN} w_k u_k \; \d x \, \d t
= \int^T_0 \langle u_k(t), w_k(t) \rangle_{\Xzs} \; \d t
\to \int^T_0 \langle u(t), w(t) \rangle_{\Xzs} \; \d t
= \int^T_0 \int_{\RN} w u \; \d x \, \d t.
\end{equation} 
The Poincar\'e inequality \eqref{eq:poincare-opt} gives
\begin{equation}\label{eq:Dk}
  D_k(t) := 
   \| u_k(t)\|^2_{L^2(\RN)} - \|u_k(t)\|_{\Xzsik}^2
   \le \left(\dfrac 1 {\lambda_1(\sigma_k)}-1\right) \|u_k(t)\|^2_{\Xzsik} \hbox{ for a.e. } t\in (0,T).
\end{equation}
Thus, recalling Proposition \ref{prop:asy_1eigen} and the energy
estimate \eqref{eq:energy1}, we conclude that
$$
  \limsup_{k\nearrow +\infty}\int_{0}^T D_k(t) \,\dt \le 0.
$$
Then \eqref{eq:minty} follows from the above along with \eqref{bb=w},
\eqref{eq:semicont2} and the fact that $u = 0$ outside $\Omega$.
Therefore thanks to \eqref{eq:convu1}, \eqref{eq:convbeta1}, and the
maximal monotonicity of the mapping $u \mapsto \beta(u)$ in $\LR \times
\LR$, one deduces that $\beta(u)=\bar \beta$ a.e.~in $\RN\times
(0,T)$. In particular, $\bar \beta = \beta(u)$ vanishes outside
$\Omega$. Hence by \eqref{bb=w} together with the fact that $w = 0$ in $\RN
\setminus \Omega$, one obtains
\begin{equation}\label{bb=w2}
\beta(u) = \bar \beta = w \quad \mbox{ a.e.~in } \RN \times (0,T),
\end{equation}
which also implies $\beta(u) = w \in L^2(0,T;\Xzs)$. 
Consequently, $u$ solves for almost any $t\in (0,T)$
\begin{equation*}
  \partial_t u + \As \beta(u) = 0 \ \hbox{ in } \Xzs'.
\end{equation*}
Note that, as a consequence of the procedure, recalling
\eqref{eq:convu1} again, we also get
\begin{equation}\label{u:forte}
  u_k \to u \quad \hbox{ strongly in } L^p(0,T;\LpO)
\end{equation}
by utilizing the uniform convexity of $\LpO$.

\subsection{Proof of Theorem \ref{th:fd}}\label{Ss:FD}
We first remark that, as in the Riesz representation theorem
for standard
Lebesgue spaces, one can also identify the dual space $(L^q_0(\RN))'$ of
$L^q_0(\RN)$ with $L^{q'}_0(\RN)$, where $q \in (1,\infty)$ and $q' :=
q/(q-1)$. 
%

Let $\sigma_k \searrow 0$ and let $(u_k,w_k)$ be the family of weak
solutions to
\begin{alignat}{4} \label{eq:ch_weak_k_mod}
 & \partial_t u_k + \As w_k = 0 \ &\hbox{ in }& \Xzs', \\
 & w_k = \Asigk u_k + \B(u_k) -\lambda_k u_k \ &\hbox{ in }& \Xzsik', \label{eq:cp_weak_k_mod}
\end{alignat}
with $u_k(0) = u_{0,k}$, where $\lambda_k := \lambda_1(\sigma_k)$
denotes the first eigenvalue of \eqref{eq:eigen} with $r$ replaced by $\sigma_k$.
As in \S \ref{Ss:UE}, (formally) test \eqref{eq:ch_weak_k_mod} by $w_k$
and \eqref{eq:cp_weak_k_mod} by $\partial_t u_k$ to get
$$
\|w_k(t)\|_{\Xzs}^2 + \dfrac{\d}{\d t} \Esk(u_k(t)) \leq 0 \quad \mbox{
for a.e. } t \in (0,T),
$$
where $\Esk : \Xzsik \to \mathbb R$ is defined as in
\eqref{eq:new_ene}. Indeed, the energy inequality above can be
rigorously derived as in
the proof of Theorem \ref{th:eu}. Integrate both sides over $(0,t)$. It
follows that
$$
\|w_k\|_{L^2(0,T;\Xzs)}^2 + \Esk(u_k(t)) \leq \Ezsit.
$$
By \eqref{unif:coerc}, we have
$$
\|w_k\|_{L^2(0,T;\Xzs)}^2 + \|u_k\|_{L^\infty(0,T;\LpO)}^p \leq
Q(\Ezsit),
$$
which also implies
$$
\|\partial_t u_k\|_{L^2(0,T;\Xzs')}^2 
+ \|\As w_k\|_{L^2(0,T;\Xzs')}^2
+ \|\beta(u_k)\|_{L^\infty(0,T;L^{p'}_0(\RN))}^{p'} 
\leq Q(\Ezsit).
$$
As in \eqref{est:Delsig}, by \eqref{eq:cp_weak_k_mod} and
estimates above along with the fact that $1 < p < 2$ (i.e., $p' > 2$),
we can take the unique bounded linear extension $\overline{\Asigk
u_k(t)} : L^{p'}_0(\RN) \to \mathbb R$ onto $L^{p'}_0(\RN)$ of the
functional $\Asigk u_k(t)|_{L^{p'}_0(\RN)} : \Xzsik \cap L^{p'}_0(\RN)
\to \mathbb R$ such that, by the identification $(\LpOd)' \simeq
\LpO$, 
\begin{align*}
\left\|\overline{\Asigk u_k(t)}\right\|_{\LpO}
 &\leq \|w_k(t)\|_{\LpO} + \lambda_k \|u_k(t)\|_{\LpO}
 + \|\beta(u_k(\cdot,t))\|_{\LpO}\\
 &\leq C \left( \|w_k(t)\|_{\LO} + \lambda_k
 \|u_k(t)\|_{\LpO} + \|u_k(t)\|_{\LpO}^{p-1}
\right)
\end{align*}
for some constant $C \geq 0$ independent of $k$ and $t$. 
We shall simply write $\Asigk u_k$ instead of $\overline{\Asigk
u_k(\cdot)}$ below. Thus we obtain
$$
\left\|\Asigk u_k\right\|_{L^2(0,T;\LpO)} \leq
Q(\Ezsit).
$$
Therefore, there exist weak limits $u$, $w$, $\bar \beta$ and $\xi$
such that, up to a (non-relabeled) subsequence,
\begin{alignat}{4}
 u_k &\to u \quad &&\mbox{ weakly star in } L^\infty(0,T;\LpO),
\label{FD:c:uLp:i}\\
 & &&\mbox{ weakly in } W^{1,2}(0,T;\Xzs'),\label{FD:c:duXs*:2}\\
 w_k &\to w \quad &&\mbox{ weakly in } L^2(0,T;\Xzs),\label{FD:c:wXs:2}\\
 \beta(u_k) &\to \bar \beta \quad &&\mbox{ weakly star in }
 L^\infty(0,T;\LpOd),\label{FD:c:betaLp':i}\\
 \Asigk u_k &\to \xi \quad &&\mbox{ weakly in }
 L^2(0,T;\LpO).\label{FD:c:DelsigLp:2}
\end{alignat}
Moreover, apply Ascoli's compactness lemma along with the compact embedding
$\LpO \simeq (\LpOd)' \hookrightarrow \Xzs'$ (see
\eqref{ass:comp_p_s}) to get
\begin{equation}\label{FD:c:uXsi*:C}
u_k \to u \quad \mbox{ strongly in } C([0,T];\Xzs').
\end{equation}
To prove $\xi = u$, we use Lemma
\ref{cor:weak_limit_s_zero-rev}. 
Indeed, choose $\beta$ sufficiently
large so that $\LpO$ is densely and continuously embedded in $\Xb' \
(\;\hookrightarrow \Xzb')$. Then the weak (star) convergence
of $u_k$ (cf.~\eqref{FD:c:uLp:i}) and that of $\Asigk u_k$ 
(cf.~\eqref{FD:c:DelsigLp:2}) suffice 
to apply the lemma (see (i) of Remark
\ref{Apdx:R:appli}) and obtain
the conclusion. As $\lambda_k \to 1$ by Prop.~\ref{prop:asy_1eigen},
we then arrive at
$$
\partial_t u + \As w = 0 \ \mbox{ in } \Xzs', \quad 
w = \bar \beta \ \mbox{ in } \Xzb'.
$$
Hence $w = \bar \beta$ in $\Omega \times (0,T)$.   
It remains to prove that $\bar \beta = \beta(u)$ in $\RN \times
(0,T)$. By Poincar\'e's inequality \eqref{eq:poincare-opt} and
\eqref{unif_bound-3}, we note that
$$
\int^T_0 \int_{\RN} \beta( u_k ) u_k \; \d x\, \d t
\leq \int^T_0 \int_{\RN} w_k u_k\; \d x\,\d t
\to \int^T_0 \int_{\RN} w u \; \d x \, \d t
= \int^T_0 \int_{\RN} \bar \beta u \; \d x \, \d t
$$
by \eqref{FD:c:wXs:2} and \eqref{FD:c:uXsi*:C}. Therefore by
\eqref{FD:c:uLp:i} and \eqref{FD:c:betaLp':i}, the maximal monotonicity
of $u \mapsto \beta(u)$ in $L^p(\RN) \times L^{p'}(\RN)$ yields $\bar
\beta = \beta(u)$ in $\RN \times (0,T)$. Moreover, $\beta(u)$ coincides
with $w$ on $\RN \times (0,T)$, as $w(\cdot,t)$ vanishes outside
$\Omega$. The rest of proof runs as in the proof of Theorem \ref{th:pm}.
%


\subsection{Proof of Theorem \ref{th:ac}}\label{Ss:AC}
Let $s_k \in (0,1)$, $s_k \searrow 0$ and let $(u_k,w_k)$ be the family of
weak solutions to the problem
\begin{alignat}{4} \label{eq:ch_weak_k_ac}
 & \partial_t u_k + \Askk w_k = 0 \ &\hbox{ in }& \Xzsk', \\
 & w_k = \Asi u_k + \B(u_k) - u_k \ &\hbox{ in }& \Esig', \label{eq:cp_weak_k_a}
\end{alignat}
with $u_k(0)=u_{0,k}$.
Compared to the proofs of Theorems \ref{th:pm} and \ref{th:fd},
this proof is definitely easier. Actually, since $\sigma$ is kept fixed,
the sequence $u_k$ retains some space compactness.

Put $X = \Esig$ and suppose that $s_k < \sigma$ for all $k \in \mathbb
N$ without any loss of generality.
Thanks to the uniform estimates \eqref{unif_bound-2}-\eqref{unif_bound-3}
obtained in \S \ref{Ss:UE} along with \eqref{hypo-ac}, one has, up to a (non-relabeled) subsequence,
\begin{alignat*}{4}
 u_k &\to u \quad &&\mbox{ weakly star in } L^\infty(0,T;\Esig),\\
 & &&\mbox{ weakly in } W^{1,2}(0,T;X'),\\
 \Asig u_k &\to \Asig u \quad &&\mbox{ weakly star in } L^\infty(0,T;\Xzsi'),\\
 w_k &\to w \quad &&\mbox{ weakly in } L^2(0,T;\LO),\\
 \Ask w_k &\to w \quad &&\mbox{ weakly in } L^2(0,T;X'),\\
 \beta(u_k) &\to \bar \beta \quad &&\mbox{ weakly in } L^2(0,T;\LO),
\end{alignat*}
which immediately gives $w = \Asig u + \bar \beta - u$ in $\Esig'$.
Here we used Lemma \ref{cor:weak_limit_s_zero-rev} with $\beta > 0$
sufficiently large so that $X' \hookrightarrow \Xzb'$ (as in \S \ref{Ss:PM})
to identify the limit of $\Ask w_k$ as $w$. 
By~\cite[Theorem 5]{Simon}, one can obtain
$$
u_k \to u \quad \mbox{ strongly in } C([0,T];\LO).
$$
By assumption, $u_{0,k} \to u_0$ strongly in $\LO$. Hence we obtain
$u(t) \to u_0$ strongly in $\LO$ as $t \searrow 0$. Moreover, by applying
Minty's trick to the maximal monotone operator $u \mapsto \beta (u(\cdot))$ in
$\LO$, one concludes that $\bar \beta = \beta(u)$. For all
$\varphi \in \DO \subset X$ and $\phi \in C^\infty_0(0,T)$, one can derive
$$
\int^T_0 \langle \partial_t u_k(t), \varphi \rangle_X \, \phi(t) \; \d t
+ \int^T_0 \langle \Ask w_k(t), \varphi \rangle_X \, \phi(t) \; \d t = 0.
$$
Passing to the limit as $k \nearrow \infty$, we obtain
$$
\int^T_0 \langle \partial_t u(t) + w(t), \varphi \rangle_X \, \phi(t)\; \d t = 0.
$$
Since $\DO$ is dense in $X$, we conclude that
$$
\partial_t u + w = 0 \ \mbox{ in } X', \quad 0 < t < T.
$$
Recalling that $X = \Esig$, we conclude that $u$ solves
$$
\partial_t u + \Asi u + \beta(u) - u = 0 \ \mbox{ in } \Esig', \quad 0 <
t < T.
$$


\section{Stationary states}
\label{sec:stat}

In this section we analyze the behavior of stationary states of system 
\eqref{eq:fCH}-\eqref{eq:bc} in the coercive case $p>2$.
We will put a particular emphasis on the asymptotic behavior of the 
stationary states when $\sigma\searrow 0$. 

\smallskip

The function $u$ is called a stationary state of \eqref{eq:fCH}-\eqref{eq:bc} 
when $\partial_t u = 0$ for a.e.~$(x,t)\in \Omega\times
(0,+\infty)$. Then by \eqref{eq:fCH} we have
$$
   (-\Delta)^s w = 0 \ \mbox{ in } \Omega, \quad w = 0 \ \mbox{ in }
  \mathbb R^N \setminus \Omega,
$$
which implies $w \equiv 0$ by Poincar\'e's inequality~\eqref{eq:poincare}.
Hence, by \eqref{eq:chem_pot}, $u$ solves the problem
\begin{equation}\label{eq:stat_state}
\begin{cases}
 (-\Delta)^\sigma u + \beta(u) - u = 0 \ &\mbox{ in } \Omega,\\
 u = 0 \ & \mbox{ in } \mathbb R^N \setminus \Omega.
 \end{cases}
\end{equation}
First of all, let us provide a weak formulation of
\eqref{eq:stat_state}. 
\begin{defi}\label{D:ss-weak}
 A function $u \in \Esig$ is called a \emph{weak solution} of
 \eqref{eq:stat_state}, if
\begin{equation}\label{eq:ss-weak}
\dfrac{C(\sigma)}2 \iint_{\RdN} \dfrac{\left(u(x)-u(y)\right)
 \left(\varphi(x)-\varphi(y)\right)}{|x-y|^{N+2\sigma}} \, \d x\, \d y
+ \int_\Omega \beta(u) \varphi \, \d x 
- \int_\Omega u \varphi \, \d x = 0
\end{equation}
for all $\varphi \in \Esig$. The weak form \eqref{eq:ss-weak} can be
 equivalently rewritten as
$$
\Asi u + B(u) - u = 0 \ \mbox{ in } \Esig'.
$$
\end{defi}
We next prove existence of a 
solution to \eqref{eq:stat_state}. To this end, we use the direct method
of calculus of variations. Recall that the energy functional
$ \mathbb{E}_{\sigma}: \mathcal E_\sigma \to \mathbb R$ 
is defined by
\begin{equation}\label{defienergy}
  \mathbb{E}_{\sigma}(v) := \dfrac{C(\sigma)} 4 \iint_{\mathbb R^{2N}}
       \dfrac{|v(x)-v(y)|^2}{|x-y|^{N+2\sigma}} \, \dx \, \dy
   + \int_\Omega \hat \beta(u) \, \dx 
   - \frac12 \int_\Omega |v|^2 \, \dx 
      \quad \mbox{ for } \ v \in \mathcal E_\sigma.
\end{equation}
It is easy to check that $\Esi$ is coercive in $\mathcal E_\sigma$. 
Indeed, by H\"older's and Young's inequalities,
$$
  \Esi(v) \geq \dfrac 1 2 \|v\|_{\Xzsi}^2 
   + \dfrac 1 {2p} \int_\Omega |v|^p \,\dx - C
    \quad \mbox{ for all } \ v \in \mathcal E_\sigma.
$$
Hence, the existence of a (global) minimizer $u\in \Esig $ follows 
from the compactness of the embedding
$\Xzsi \hookrightarrow \LO$. Note that $u$ actually solves
equation \eqref{eq:stat_state} in the sense of Definition
\ref{D:ss-weak}. Now, let $u$ be a global minimizer of $\Esi$.
 Since $\big||u(x)|-|u(y)|\big|\le |u(x)-
u(y)|$ for $(x,y)\in \RN\times\RN$, 
$\vert u\vert$ has the same
energy of $u$, namely $\Esi(u)=\Esi(|u|)$. Hence $|u|$ also minimizes $\Esi$
and solves \eqref{eq:stat_state}. Consequently, by applying maximum
principle for the fractional Laplacian (see,
e.g.,~\cite{ROS},~\cite{GrSer}) to the nonnegative solution $|u|$, we
infer that $|u| > 0$ or $u \equiv 0$ in $\Omega$. Therefore every
minimizer $u$ of $\Esi$ turns out to be sign-definite or identically
equal to zero over $\Omega$.
 Now, we are going to give conditions
implying that there exist nontrivial solutions to \eqref{eq:stat_state}. 
We can prove the following
\begin{propo}\label{prop:non_trivial}
 Let $\lambda_1(\sigma)$ be the first eigenvalue of \eqref{eq:eigen}
 with $r$ replaced by $\sigma$. Then,
 if $\lambda_1(\sigma) < 1$, problem \eqref{eq:stat_state} admits 
 a nontrivial weak solution.
\end{propo}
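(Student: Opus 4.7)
The plan is to apply the direct method of the calculus of variations to the energy functional $\Esi$ given in \eqref{defienergy} and show that, under the hypothesis $\lambda_1(\sigma)<1$, any global minimizer must be nontrivial. As noted in the discussion preceding the proposition, $\Esi$ is coercive and (weakly) lower semicontinuous on $\Esig$ in the coercive regime $p>2$, and the compactness of the embedding $\Xzsi\hookrightarrow\LO$ ensures the existence of a minimizer $u_*\in\Esig$ solving \eqref{eq:stat_state} in the sense of Definition~\ref{D:ss-weak}.

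The key step, which I expect to be essentially the only nontrivial part of the argument, is to exhibit a competitor $v\in\Esig$ with $\Esi(v)<0=\Esi(0)$, so that $u_*\not\equiv 0$. To this end, I would use the normalized first eigenfunction $e_1\in\Xzsi$ of $\Asig$ furnished by Lemma~\ref{lem:1eigen}, which satisfies $\|e_1\|_{\LO}=1$ and $\|e_1\|_{\Xzsi}^2=\lambda_1(\sigma)$. For $t>0$, I would compute
\begin{equation*}
  \Esi(t e_1)
   = \frac{t^2}{2}\lambda_1(\sigma) + \frac{t^p}{p}\|e_1\|_{L^p(\Omega)}^p
     - \frac{t^2}{2}
   = \frac{t^2}{2}\bigl(\lambda_1(\sigma)-1\bigr)
     + \frac{t^p}{p}\|e_1\|_{L^p(\Omega)}^p.
\end{equation*}
Since $p>2$, the term $\frac{t^p}{p}\|e_1\|_{L^p(\Omega)}^p$ is $o(t^2)$ as $t\searrow 0$, while the assumption $\lambda_1(\sigma)<1$ makes the quadratic contribution strictly negative. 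Consequently, there exists $t_0>0$ sufficiently small so that $\Esi(t_0 e_1)<0$.

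One technical point worth verifying is that $e_1\in\Esig=\Xzsi\cap L^p(\RN)$, i.e.\ that $\|e_1\|_{L^p(\Omega)}$ is finite. This is automatic when $p\le 2^*_\sigma=2N/(N-2\sigma)$ by the Sobolev embedding $\Xzsi\hookrightarrow L^p(\Omega)$; for larger $p$ one invokes the known $L^\infty$-bound for first Dirichlet eigenfunctions of the fractional Laplacian (e.g.\ Proposition~\ref{prop:ROS} applied iteratively via a bootstrap, or the direct $L^\infty$-estimates of Servadei--Valdinoci and Ros-Oton--Serra already cited in the paper). Once this finiteness is secured, combining the existence of a minimizer with the strict inequality $\inf_{\Esig}\Esi\le \Esi(t_0 e_1)<0=\Esi(0)$ yields $u_*\not\equiv 0$, completing the proof.
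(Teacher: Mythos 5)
Your proposal is correct and takes essentially the same route as the paper: test the energy $\Esi$ along a small multiple of the normalized first eigenfunction, exploit $p>2$ to make the $L^p$-term subordinate to the quadratic term, and conclude that the infimum (hence the value at any global minimizer) is strictly negative. The paper handles the finiteness of $\|e_1\|_{L^p(\Omega)}$ in the same spirit, by citing the known Hölder (hence $L^\infty$) regularity of the first eigenfunction from Ros-Oton--Serra and Servadei--Valdinoci, which is slightly more direct than your bootstrap remark but substantively equivalent.
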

\begin{proof}
First of all, we claim that
$$
  \inf_{\mathcal E_\sigma} \Esi < 0 \quad \mbox{ if } \lambda_1(\sigma) < 1.
$$
Indeed, let $v_\sigma$ be the eigenfunction of $\Asig$ 
corresponding to the first eigenvalue $\lambda_1(\sigma)$,
normalized with respect to the $\LO$-norm. Then it is proved that
$\lambda_1(\sigma)$ is simple in~\cite[Prop.~9~(c)]{serva-valdi11}. Moreover,
 $v_\sigma$ is H\"older continuous up to the boundary by~\cite[Theorems
 1.1 and 1.3]{ROS-Poh} and~\cite[Proposition 4]{SeVa13}.
By a simple calculation, we have for $\epsi> 0$
\begin{align*}
 \Esi(\epsi v_\sigma)
  & = \dfrac{\epsi^2}2  \|v_\sigma\|_{\Xzsi}^2
   + \dfrac{\epsi^p}{p} \int_\Omega |v_\sigma|^p \, \dx 
   - \dfrac{\epsi^2}{2} \int_\Omega |v_\s|^2 \, \dx\\
 & = \dfrac{\epsi^2}2 \lambda_1(\sigma)
  + \dfrac{\epsi^p}{p} \int_\Omega |v_\s|^p \, \dx 
  - \dfrac{\epsi^2}{2} \\
 & = \epsi^2 \left[
 \dfrac 1 2 \left(\lambda_1(\sigma) - 1\right) + \dfrac{\epsi^{p-2}}p
  \int_\Omega|v_\s|^p \, \dx \right] < 0,
\end{align*}
provided that $\epsi$ is chosen so that
$$
  \dfrac 1 2 \left(\lambda_1(\s) - 1\right) 
   + \dfrac{\epsi^{p-2}}{p} \int_\Omega |v_\s|^p \, \dx < 0,
   \ \mbox{ i.e.~} \ 
   \epsi^{p-2} < \dfrac{p(1 - \lambda_1(\s))}{2 \int_\Omega |v_\s|^p
 \,\dx}.
$$
Hence the infimum of $\Esi$ over $\mathcal E_\s$ is negative whenever
$\lambda_1(\s) < 1$. Therefore every global minimizer is nontrivial.
\end{proof}
\begin{remark} {\rm
 If the first eigenvalue $\lambda_1$ of $-\Delta$ equipped with the homogeneous
 Dirichlet condition is not greater than one, then by the upper estimate
 $\lambda_1(\s) < \lambda_1^\s$ (see \eqref{eq:eigen_upper}), 
 we have $\lambda_1(\s) < 1$. Hence \eqref{eq:stat_state} possesses
 a nontrivial weak solution.
 }
\end{remark}
\begin{lemma}\label{L}
 Let $u$ be a weak solution of \eqref{eq:stat_state}. Then its energy is nonpositive, 
 i.e.~$\Esi(u)\le 0$. Moreover, if $\Esi(u) = 0$, then $u\equiv 0$.
\end{lemma}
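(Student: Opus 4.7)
The plan is to test the weak formulation against $u$ itself and combine the resulting identity with the expression of the energy. Since $u\in \Esig$, it is an admissible test function in \eqref{eq:ss-weak}. Choosing $\varphi=u$ and recalling that $\iint_{\RdN}=\iint_Q$ for the integrand in question (the integrand vanishes on $(\RN\setminus\Omega)\times(\RN\setminus\Omega)$), we arrive at the identity
\begin{equation}\label{eq:pf:test}
\|u\|_{\Xzsi}^2+\int_\Omega \beta(u)\,u\,\dx-\int_\Omega u^2\,\dx=0.
\end{equation}
Since $\beta(v)v=|v|^{p-1}(\sign v)v=|v|^p=p\,\hat\beta(v)$, identity \eqref{eq:pf:test} reads
\begin{equation*}
\|u\|_{L^2(\Omega)}^2=\|u\|_{\Xzsi}^2+\|u\|_{L^p(\Omega)}^p.
\end{equation*}

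Next I would substitute this into the definition \eqref{defienergy} of $\Esi(u)$. Using that $\tfrac12\|u\|_{\Xzsi}^2$ equals the Gagliardo term in $\Esi$ and that $\int_\Omega \hat\beta(u)\,\dx=\tfrac1p\|u\|_{L^p(\Omega)}^p$, I would obtain
\begin{align*}
\Esi(u)
&=\tfrac12\|u\|_{\Xzsi}^2+\tfrac1p\|u\|_{L^p(\Omega)}^p-\tfrac12\|u\|_{L^2(\Omega)}^2\\
&=\tfrac12\|u\|_{\Xzsi}^2+\tfrac1p\|u\|_{L^p(\Omega)}^p-\tfrac12\bigl(\|u\|_{\Xzsi}^2+\|u\|_{L^p(\Omega)}^p\bigr)\\
&=\left(\tfrac1p-\tfrac12\right)\|u\|_{L^p(\Omega)}^p.
\end{align*}

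Since we are in the coercive regime $p>2$, the prefactor $\tfrac1p-\tfrac12$ is strictly negative, hence $\Esi(u)\le 0$. Moreover, if $\Esi(u)=0$ then necessarily $\|u\|_{L^p(\Omega)}=0$, so $u\equiv 0$ on $\Omega$, and the boundary condition $u=0$ on $\RN\setminus\Omega$ built into the space $\Esig$ forces $u\equiv 0$ on the whole of $\RN$. There is no substantive obstacle here: the whole argument rests on the admissibility of $\varphi=u$ in the weak formulation (which is immediate from $u\in\Esig$) and on the elementary computation above.
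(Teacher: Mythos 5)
Your proof is correct and follows essentially the same route as the paper's: test the weak formulation \eqref{eq:ss-weak} with $\varphi=u$, observe that $\beta(u)u=|u|^p$, and substitute the resulting identity into the definition of $\Esi(u)$ to arrive at $\Esi(u)=-\left(\tfrac12-\tfrac1p\right)\|u\|_{L^p(\Omega)}^p\le 0$. The paper's proof is just a compressed version of the same computation.
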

\begin{proof}
Let $u$ be a weak solution of \eqref{eq:stat_state}. Test
 \eqref{eq:stat_state} by $u$ (i.e., substitute $\varphi = u$ in
 \eqref{eq:ss-weak}) to get
$$
\|u\|_{\Xzsi}^2  
   + \int_\Omega |u|^p \, \dx - \int_\Omega |u|^2 \, \dx 
  = 0,
$$
which yields
$$
  \Esi(u) = - \left( \dfrac 1 2 - \dfrac 1 p \right) \int_\Omega |u|^p \, \dx \leq 0.
$$
In particular, if $\Esi(u) = 0$, then $u \equiv 0$.
\end{proof}
As a consequence, we have the following criterion for non-existence 
of nontrivial solutions:
\begin{corollary}\label{C:nonex}
 If $\lambda_1(\s) \geq 1$, then \eqref{eq:stat_state} admits only
 the trivial solution.
\end{corollary}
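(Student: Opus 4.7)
The plan is to exploit the variational characterization of the first eigenvalue $\lambda_1(\sigma)$ via the Poincar\'e-type inequality \eqref{eq:poincare-opt}, combined with the identity obtained by testing the equation against the solution itself, as already done in Lemma \ref{L}. This should yield an inequality whose sign is controlled by $1 - 1/\lambda_1(\sigma)$, giving the desired rigidity whenever $\lambda_1(\sigma)\geq 1$.

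More precisely, let $u\in\Esig$ be a weak solution of \eqref{eq:stat_state} in the sense of Definition \ref{D:ss-weak}. Choosing $\varphi=u$ in \eqref{eq:ss-weak}, exactly as in the proof of Lemma \ref{L}, I obtain
\begin{equation*}
 \|u\|_{\Xzsi}^2 + \int_\Omega |u|^p \, \d x = \int_\Omega |u|^2 \, \d x.
\end{equation*}
Then I apply the optimal Poincar\'e inequality \eqref{eq:poincare-opt} with $r=\sigma$, which reads $\|u\|^2_{\LO} \leq \lambda_1(\sigma)^{-1}\|u\|^2_{\Xzsi}$, to bound the right-hand side. Rearranging the terms, this produces the key estimate
\begin{equation*}
 \left(1-\frac{1}{\lambda_1(\sigma)}\right)\|u\|_{\Xzsi}^2 + \int_\Omega |u|^p \, \d x \le 0.
\end{equation*}

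Under the hypothesis $\lambda_1(\sigma)\geq 1$, the coefficient $1-1/\lambda_1(\sigma)$ is nonnegative, so both terms on the left-hand side are nonnegative. Consequently $\int_\Omega |u|^p\,\d x = 0$, and hence $u\equiv 0$ in $\Omega$; combined with the solid Dirichlet condition $u=0$ on $\RN\setminus\Omega$, this gives $u\equiv 0$ in the whole of $\RN$, which is the claim. There is really no serious obstacle here: the argument is a three-line computation once \eqref{eq:poincare-opt} and the test-by-$u$ identity from Lemma \ref{L} are in hand. The only conceptual point worth emphasizing is that this corollary is sharp in view of Proposition \ref{prop:non_trivial}: the threshold $\lambda_1(\sigma)=1$ exactly separates existence from non-existence of nontrivial stationary states, in perfect analogy with the classical case of the standard Laplacian (where the corresponding threshold involves the first Dirichlet eigenvalue).
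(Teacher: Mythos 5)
Your proof is correct and rests on the same two ingredients as the paper's argument, namely the test-by-$u$ identity (the same one used in Lemma~\ref{L}) and the optimal Poincar\'e inequality \eqref{eq:poincare-opt}. The paper packages these by first showing $\Esi(u)\geq 0$ under $\lambda_1(\sigma)\geq 1$ and then invoking Lemma~\ref{L} to conclude $\Esi(u)=0$ and $u\equiv 0$, whereas you combine the two facts directly into the single inequality $\left(1-\lambda_1(\sigma)^{-1}\right)\|u\|_{\Xzsi}^2+\int_\Omega|u|^p\,\d x\leq 0$; the two routes are equivalent, and yours is marginally more direct.
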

\begin{proof}
 We observe that, by \eqref{eq:poincare-opt}, 
 \begin{align}
   \Esi(u) & = \dfrac 1 2 \|u\|_{\Xzsi}^2 
    + \dfrac 1 p \int_\Omega |u|^p \, \dx - \dfrac 1 2 \int_\Omega |u|^2 \, \dx
    \nonumber \\
   & \geq \dfrac 1 2 \left( \lambda_1(\s) - 1 \right) \int_\Omega |u|^2 \, \dx 
    + \dfrac 1 p \int_\Omega |u|^p \, \dx 
    \quad \mbox{ for all } \, u \in \mathcal E_\s.
 \label{est-b}
\end{align}
 Hence, if $\lambda_1(\s) \geq 1$, then $\Esi(u) \geq 0$. Therefore, due to Lemma
 \ref{L}, problem \eqref{eq:stat_state} has no nontrivial weak solution.
\end{proof}
\noindent%
We are now in position to state a result on the asymptotic behavior of nontrivial
weak solutions as $\s \searrow 0$.
\begin{propo}
 Suppose that $\lambda_1(\s)< 1 $ for all $\s \in (0,1)$.
 Let, for $\sigma\in(0,1)$, $u_\sigma$ 
 be a nontrivial weak solution of \eqref{eq:stat_state}.
 Then $u_\sigma$ converges to zero strongly 
 in $\LO$ as $\s \searrow 0$.
\end{propo}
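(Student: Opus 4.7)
The plan is to extract from the nontriviality of $u_\sigma$ and the equation itself a quantitative smallness estimate on $\|u_\sigma\|_{L^p(\Omega)}$, then use the coercive nature of the potential ($p>2$) to convert this into smallness in $\LO$. The key asymptotic ingredient is Proposition \ref{prop:asy_1eigen}, which tells us that $\lambda_1(\sigma)\nearrow 1$ as $\sigma\searrow 0$.

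First, I would test the weak formulation \eqref{eq:ss-weak} against $\varphi = u_\sigma\in\Esig$ itself, as already done in Lemma~\ref{L}. This produces the identity
\begin{equation*}
\|u_\sigma\|_{\Xzsi}^2 + \int_\Omega |u_\sigma|^p\,\d x = \int_\Omega |u_\sigma|^2\,\d x.
\end{equation*}
Applying the Poincar\'e-type inequality \eqref{eq:poincare-opt} to the left-hand side, $\|u_\sigma\|_{\Xzsi}^2\ge \lambda_1(\sigma)\|u_\sigma\|_{L^2(\Omega)}^2$, and rearranging gives
\begin{equation*}
\|u_\sigma\|_{L^p(\Omega)}^p \leq \bigl(1-\lambda_1(\sigma)\bigr)\,\|u_\sigma\|_{L^2(\Omega)}^2.
\end{equation*}

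Next, since $p>2$ and $\Omega$ is bounded, H\"older's inequality yields $\|u_\sigma\|_{L^2(\Omega)}^2 \leq |\Omega|^{1-2/p}\|u_\sigma\|_{L^p(\Omega)}^2$. Plugging this into the previous bound and using that $u_\sigma\not\equiv 0$ (so that $\|u_\sigma\|_{L^p(\Omega)}>0$), I may divide both sides by $\|u_\sigma\|_{L^p(\Omega)}^2$ to obtain
\begin{equation*}
\|u_\sigma\|_{L^p(\Omega)}^{p-2} \leq \bigl(1-\lambda_1(\sigma)\bigr)\,|\Omega|^{1-2/p}.
\end{equation*}

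Finally, letting $\sigma\searrow 0$ and invoking Proposition~\ref{prop:asy_1eigen}, the right-hand side tends to $0$, hence $\|u_\sigma\|_{L^p(\Omega)}\to 0$. A further application of H\"older's inequality then gives $\|u_\sigma\|_{L^2(\Omega)}\to 0$, which, in view of the definition of the $\LO$-norm (recall $u_\sigma\equiv 0$ outside $\Omega$), is exactly the claimed strong convergence to zero in $\LO$.

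I do not foresee a substantive obstacle: once the eigenvalue asymptotics of Proposition~\ref{prop:asy_1eigen} is granted, the proof reduces to inserting it into the energy identity obtained by testing the stationary equation. The only subtlety is the use of nontriviality to divide by $\|u_\sigma\|_{L^p(\Omega)}^2$, but this is immediate from Lemma~\ref{L}.
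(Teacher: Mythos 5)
Your proof is correct and uses the same ingredients as the paper (Poincar\'e inequality \eqref{eq:poincare-opt}, H\"older's inequality, nontriviality, and Proposition~\ref{prop:asy_1eigen}), but your organization is somewhat more direct. The paper's argument works at the level of the energy functional $\Esi$: it first shows, via Lemma~\ref{L}, that any nontrivial weak solution must have $\Esi(u_\sigma) < 0$, and separately establishes (by the same Poincar\'e-plus-H\"older estimate \eqref{est-b}) that $\Esi(v) \geq 0$ on the $\LO$-sphere of radius $r_k := \bigl(\frac p2 |\Omega|^{(p-2)/2}(1-\lambda_1(\sigma_k))\bigr)^{1/(p-2)}$, from which $\|u_\sigma\|_{\LO} < r_k \to 0$. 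You bypass Lemma~\ref{L} entirely: you test the stationary equation against $u_\sigma$ to produce the identity $\|u_\sigma\|_{\Xzsi}^2 + \|u_\sigma\|_{L^p}^p = \|u_\sigma\|_{L^2}^2$, then apply the same two inequalities and divide by $\|u_\sigma\|_{L^p}^2 > 0$ to obtain the explicit bound $\|u_\sigma\|_{L^p}^{p-2} \leq (1-\lambda_1(\sigma))|\Omega|^{1-2/p}$, from which $L^2$-smallness follows. Both yield quantitative $L^2$-bounds of the same order $(1-\lambda_1(\sigma))^{1/(p-2)}$. The tradeoff: the paper's route clarifies the variational picture (energy is negative on solutions, nonnegative outside a shrinking ball), which is conceptually useful given the surrounding discussion; yours reaches the estimate with one fewer auxiliary lemma and is arguably crisper.
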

\begin{proof}
Let $\sigma$ converge to $0$ along a sequence $\s_k$ and denote 
by $u_k$ the corresponding nontrivial weak solution to
 \eqref{eq:stat_state}. 
Let $r > 0$ and $v\in \mathcal E_{\s_k}$ be such that $\|v\|_{\LO} = r$. 
Let us first note that
$$ 
  \int_\Omega |v|^2 \, \dx 
   \leq |\Omega|^{(p-2)/p} \left( \int_\Omega |v|^p \, \dx
 \right)^{2/p}.
$$
Hence, combining this with \eqref{est-b}, we infer 
$$
  \Esi(v) \geq \dfrac {r^2} 2 (\lambda_1(\s_k) - 1) 
 + \dfrac{r^p}{p|\Omega|^{(p-2)/2}}
   = \dfrac{r^2}2 \left( \lambda_1(\s_k) - 1 +
 \dfrac{2r^{p-2}}{p|\Omega|^{(p-2)/2}} \right) \geq 0,
$$
provided that $\lambda_1(\s_k) - 1 + 2r^{p-2}/(p|\Omega|^{(p-2)/2})
 \geq 0$, which corresponds to~$r^{p-2} \geq (p/2) |\Omega|^{(p-2)/2} (1
 - \lambda_1(\s_k))$. 
Set
$$
 r_k := \left( \frac p 2 |\Omega|^{(p-2)/2} (1 - \lambda_1(\s_k))
 \right)^{1/(p-2)}.
$$
Then one has
$$
  \inf \big\{ \Esi(v) \colon v \in \mathcal E_{\s_k},    
     \ \|v\|_{\LO} \geq r_k \big\} \geq 0.
$$
Therefore, by Lemma~\ref{L},  
being $u_k$ non trivial, we have
$$
  \| u_k\|_{\LO} < r_k.
$$
Since $\lambda_1(\s_k) \to 1$ as $k\nearrow +\infty$ 
(cf.~Proposition~\ref{prop:asy_1eigen}), we
conclude that $u_k \to 0$ strongly in $\LO$ as $k\nearrow +\infty$.
\end{proof}

%
%


\section{Appendix}
 \label{sec:models}

Here we discuss in some more detail the relations
between our problem and other nonlocal Allen-Cahn 
or Cahn-Hilliard models. We just comment on the 
assumptions on the fractional diffusion operators,
neglecting the differences occurring in the choice
of the potential.

Let us start describing the relations between our problem
and the Cahn-Hilliard equation analyzed in \cite{BatesHan,CFG,GZ}.
Actually, these problems share a common variational 
structure as the variable $w$ is 
introduced as the first variation (in $L^2$) of some 
functional: in our case, of $\mathbb{E}_{\sigma}$
(cf.~\eqref{eq:def_energy}), while the nonlocal operator 
$J[u]$ of \cite{BatesHan,CFG,GZ} (recall~\eqref{eq:bates})
corresponds to the gradient of  
\begin{equation}\label{EJ}
  \mathbb{E}_J[u] = \frac14 \iint_{ \Omega \times \Omega }
   j(x-y) \big( u(x) - u(y) \big)^2 \, \dx \, \dy,
\end{equation}
as a direct computation shows.

Apart from the lower regularity of the kernel $K_r$ associated 
to $(-\Delta)^r$, one major difference is that the integration
domain in \eqref{EJ} is $\Omega \times \Omega$ in place of $\RR^{2N}$.
In a sense, this is similar to what happens in the case of the
so-called {\it regional Laplacian}\/ (cf., e.g., \cite{Guan}),
defined, for smooth functions, as (compare with~\eqref{def:fract_lapl})
\begin{equation}\label{reg:lapl}
  \Drreg u(x):= C(r,N) \pv \int_{\Omega} \frac{u(x)- u(y)}{\vert x-y\vert^{N + 2 r}} \,\dy.
\end{equation}
In other words, the above position corresponds to assuming 
that, as $x\in\Omega$, the value of $u(y)$ influences 
that of $u(x)$ only for $y\in\Omega$. Hence, in fact, no boundary 
conditions are taken in that case. In our setting, instead,
also the outer (Dirichlet) value $u(y)=0$, $y\in \RN\setminus \Omega$, 
carries some influence on the value of $\Dr u(x)$.

This difference is also reflected when one looks at the first
variation $J$ of $\mathbb{E}_J$ (cf.~\eqref{eq:bates}),
where the function $a(\cdot)$ depends in fact
on the variable $x\in \Omega$. 
Indeed, taking $W\equiv 0$ and neglecting
the principal value for simplicity,
also in our case it would be possible to write 
(at least formally)
$$ 
  (\delta \mathbb{E}_\sigma)(u) = a u - K_\sigma * u, 
   \quad a = \int_{\RN} K_\sigma (x - y) \, \dy.
$$
However, $a$, representing the ``total mass''
of $K_\sigma$, is now independent of $x$ (note, instead, that the 
convolution term is the same in both cases as we assume
$u$ be identically $0$ out of $\Omega$).

\smallskip

A further difference between the two models is related
to the regularity of the kernels. In \cite{BatesHan,CFG,GZ},
$j$ satisfies some kind of summability property.
On the other hand, in the case of the fractional Laplacian, 
the kernel $K_r$ is somehow ``less than $L^1$''. This is a trivial
remark, of course. However, some consequences from the point of view 
of regularity analysis deserve to be discussed. Indeed, in the
present case the term $\Ar u$ is {\it less regular}\/ than $u$
and this fact implies that the equation enjoys some {\it smoothing
effect}, as happens in the standard parabolic case
(i.e., for the usual Laplacian). This fact
permits us to ``embed'' compactness and density tools in 
the Hilbert formulation by working in
Hilbert triplets like $(\Xzr,\LO,\Xzr')$.

Instead, in the models analyzed 
in~\cite{BatesHan,CFG,GZ}, the term $J[u]$ is 
{\em strictly more regular}\/ than the function $u$ on
which $J$ acts. This means that the PDE system has limited 
regularization effects (actually, this is especially true for
Allen-Cahn based models, see, e.g.,~\cite{GS}; 
in the Cahn-Hilliard case the Laplacian
in the equation $u_t = \Delta w$ corresponding to 
our \eqref{eq:fCH} partially compensates
this). In other words, a singularity in the 
initial datum tends to propagate with time without smoothing 
out (at most it decreases in amplitude like in
a dissipative ODE). 

\smallskip

The regional Laplace operator \eqref{reg:lapl} characterizes also
the model studied more recently in \cite{ABG}. To be precise,
in \cite{ABG} the standard Laplacian is taken
in the analogue of \eqref{eq:fCH}, while
the regional operator $\Dsireg$ appears in the analogue of
\eqref{eq:chem_pot}. Moreover, no-flux conditions are
assumed for $w$. This, in particular, entails a mass-conservation 
property; namely, the spatial mean value of $u$ is constant
with respect to time, as one expects to occur 
in models describing phase separation,
on account of the underlying physics.
Using that $\Dsireg$ can be seen as a 
fractional power of the Neumann Laplacian
(restricted to the class of zero-mean functions), the authors
of~\cite{ABG} can show that, at least for sufficiently smooth
solutions, also the component 
$u$ turns out to satisfy a no-flux condition 
on $\partial\Omega$ (though no explicit boundary
condition is required in the mathematical formulation
of their problem).
It is also worth mentioning that the analysis given
in~\cite{ABG} admits a much wider class of potentials $W$,
including in particular singular functions like the ``logarithmic
potential'' mentioned in the Introduction. In principle,
this would be possible also for our model and we plan 
to address this issue in a forthcoming work. We 
also observe that, for the model considered in \cite{ABG},
one expects that the solution, at least asymptotically in
time and for non-singular functions $W$, could satisfy
strong regularization properties. This is due to the fact
that the regional Laplacian, as a fractional power 
of the Neumann Laplacian, satisfies the property 
$(-\Delta)^r_{\rm reg} \circ (-\Delta)^t_{\rm reg} 
= (-\Delta)^{r+t}_{\rm reg}$, which may allow use of 
bootstrap regularity methods. On the other hand, an analogue property
fails for our operators $\Ar$ due to the occurrence
of the ``solid'' Dirichlet condition. For this reason,
we expect that the analysis of asymptotic regularity properties
of weak solutions could be more challenging in our case.




\end{document}